\title{Two-Scale Geometric Modelling for Defective Media}
\author{Mewen Crespo${}^*$}
\author{Guy Casale}
\author{Loïc Le Marrec}
\affil{IRMAR – université de Rennes 1, campus de Beaulieu, 35042 Rennes cedex, France}
\begin{document}
\tcbstartrecording[proofs.tex]

\maketitle

\renewcommand{\phantomInput}[1]{}

\begin{abstract}
	A new geometrically exact micro-structured model is constructed using a generalisation of the notion of Riemann-Cartan manifolds and fibre bundle theory of rank $3$. This models is based around the concept of two different length scales: a macroscopic scale $-$ of dimensions $1$, $2$, or $3$ $-$ and a microscopic one $-$ of dimension $3$. As they interact with each other, they produce emergent behaviours such as dislocations (torsion) and disclinations (curvature). A first-order placement map $\Fb : \Tr\B \longto \Tr\E$ between a micro-structured body $\B$ and the micro-structured ambient space $\E$ is constructed, allowing to pull the ambient Riemann-Cartan geometry back onto the body. In order to allow for curvature to arise, $\Fb$ is, in general, not required to be a gradient. Central to this model is the new notion of pseudo-metric, providing, in addition to a macroscopic metric (the usual Cauchy-Green tensor) and a microscopic metric, a notion of coupling between the microscopic and macroscopic realms. A notion of frame indifference is formalised and invariants are computed. In the case of a micro-linear structure, it is shown that the data of these invariants is equivalent to the data of the pseudo-metric.
\end{abstract}

\section{Introduction}

\subsection{Motivations and historical context}

In continuum mechanics, one wishes to represent a physical medium though a continuous theory. That is, a theory making use of only real connected varieties and, as much as possible, smooth functions. Real matter, however, has little interest in our wishes and is, at a small scale, fundamentally discrete (e.g. atoms, molecules, biological cells, structural cells of an object, etc.). For this reason, a continuous model valid and precise for several scales is hard to reach. In order to fill the gap between the macroscopic realm $-$ which classical continuum mechanics\footnote{In this paper, the term \say{classical continuum mechanics} and \say{usual continuum mechanics} will refer to Cauchy's continuum theory where a configuration is a $\C^1$ embedding of the body into a connected sub-manifold of the $3D$ Euclidean space.} is good at describing $-$ and the microscopic\footnote{Some authors prefer different names such as \say{mesoscopic} depending on the actual size or specific nature of the smaller scale. In this paper, \say{microscopic} will be used for any scale smaller that the macroscopic one but still large enough so it can be considered as a continuum.} realm, new models extending the classical model $-$ referred to as micro-structured models $-$ are required. In the simplest case, when the micro-structure behaves ideally and emulates the (continuous) macroscopic structure, the classical models are still valid. Hence, those classical models cease to be adequate only when the behaviour of the microstructure and the macrostructure start to differ. This is the case in a wide variety of materials such as crystals, polymers, solids with micro-cracks, bones, muscles, etc. and, more generally, in the study of plasticity \parencite{le1996model}. Such a deviation from an ideal state is referred to in the literature as a defect of the material. Instead of ideal state one therefore usually prefer the term defect-free state, whose notion is theoretically as arbitrary as a reference configuration but which, in practice, is often defined as a perfectly ordered micro-structure \parencite{kroner2001benefits,kroner1980continuum}. The study of defect in media is therefore crucial to the elaboration of good micro-structured models.\\

In \citeyear{volterra1907equilibre}, \citeauthor{volterra1907equilibre} published \parencite{volterra1907equilibre} one of the first detailed description of defects in crystals, whose micro-structure is a $3\Dr$ discrete lattice. This led to the so-called Volterra process \parencite{eshelby1956continuum, sahoo1984elastic}. This process states that all crystals can be obtained by starting from an ideal defect-free crystal $-$ modelled as a classical $3$-dimensional continuum $-$ and adding a finite number of defects in the following way: choose a surface $S$, bounded by a curve $\delta S$, in the medium and an affine relative displacement $\delta \ub$ for each pair of points on either side of $S$. Then, apply the relative displacement, removing material where there would be interpenetration and adding material where there would be a gap. Since $\delta \ub$ is affine in the position, it can be decomposed into a translation $\ov b$ and a rotation $\ov\omega$ (seen as its axis vector). The vector $\ov b$ is called the Burgers vector $-$ as first introduced in the work of the Burgers brothers on crystal lattices \parencite{burgers1956dislocations} $-$ and yields a dislocation; the vector $\ov\omega$ on the other hand is referred to as the Frank vector $-$ as introduced by \citeauthor{frank1958liquid} in his work on liquid crystals \parencite{frank1958liquid} $-$ and yields what is called a disclination\footnote{\Citeauthor{frank1958liquid} originally used the term \say{disinclination} in \cite{frank1958liquid}, to which we shall prefer the term disclination, more widely used nowadays.}.\\

\VOID{\begin{figure}
	\centering
	\includegraphics{./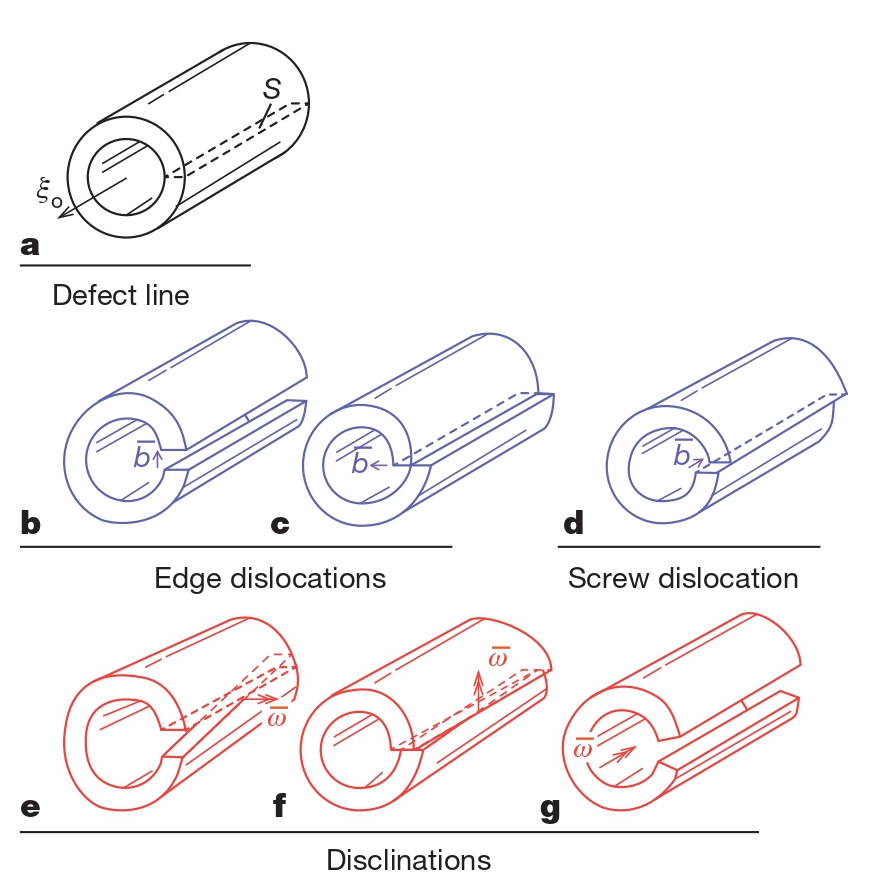}
	\caption{\textbf{Volterra's process.} $\ab$, Reference cylinder with defect line $\xi_0$ and
cut surface $S$. $\bb$, $\cb$, Edge dislocations with Burgers vector $\ov{b}$. $\db$, Screw dislocation
with Burgers vector $\ov{b}$. $\eb$, $\fb$, Twist disclinations with Frank vector $\ov\omega$. $\gb$, Wedge
disclination with Frank vector $\ov\omega$.\\
{\scriptsize Source: \citeauthor*{cordier2014disclinations}, \citeyear{cordier2014disclinations}, \citetitle{cordier2014disclinations} \cite{cordier2014disclinations}}}
	\label{fig:volterra}
\end{figure}}

Those works led to numerous results in the last century where micro-structured models are obtained by lessening classical models' regularities. This may be in the form of discontinuities $-$ as in the works of \fullciteauthor{eshelby1956continuum, sahoo1984elastic} allowing jumps at dislocation lines $-$ or in the form of non-integrability $-$ as in the now standard works of \fullciteauthors{le1996model}{le1994finite,le1996determination} multiplicatively decomposing the first-order transformation as an elastic (defect-free) and a plastic (carrying the defects) transformation. Both approaches are linked, as illustrated by \citeauthor*{reina2016derivation} in \cite{reina2016derivation}.\\

Shortly after Volterra, in a paper published in 1909, \fullciteauthor{cosserat1909theorie} took a different route by introducing one of the first generalized continua, adding three rotational degrees of freedom at each point. In addition to the standard displacement field used in a classical continuum mechanics, an independent unit vector field or, equivalently, a field of $3\Dr$ Euclidean rotations is additionally specified on the body. More than a decade later \fullciteauthor{cartan1922generalisation}, inspired by this work, constructed the so-called Riemann-Cartan manifold. This manifold has a Riemannian metric but also makes one of the first uses of an affine connection inducing torsion and curvature.\\

In this paper, E. Cartan introduces the notion of \say{trièdres rectangles} \partxt{\ie{} orthogonal frames}, adding that additional rotational degrees of freedom, and deeply talks about the important interpretation of the lack of closure of parallel transported paths along an infinitesimal loop. More precisely, when parallel transporting such a frame along a close macroscopic loop, the resulting frame may be slightly rotated and/or translated compared to the initial frame. Mathematically this leads to the notion of torsion and curvature on which Cartan focuses in \cite{cartan1922generalisation}. In the case of crystals, identifying this frames with the principal directions of the crystal lattice leads to a fundamental result: the correspondence between this torsion (resp. curvature) and the aforementioned notion of dislocation (resp. disclination) \cite[293]{kroner1980continuum}, \cite[791]{sahoo1984elastic}, \cite[12]{peshkov2019continuum}, \cite[613]{le1996model}.\\


These pioneer works led to multiple other works, where micro-structured models are created by enriching classical models with additional degrees of freedom. Among those, the models derived by \fullciteauthors{eringen1964nonlinear}{eringen1998microcontinuum}, \fullciteauthor{mindlin1964micro} and \fullciteauthors{toupin1964theories}{toupin1962elastic}, made explicit Cartan's interpretation of the generalised continuum as a macroscopic continuum where each point contains a microscopic space. This was done using two set of coordinates, one macroscopic and one microscopic. Having real coordinates, this allows for an easier physical interpretation, at the cost of not being intrinsic (i.e. covariant) out of-the-box. Those fundamental works in turn led to further developments and specialisations of the models, which are still ongoing nowadays \partxt{\textit{e.g.} \cite{neff2014unifying,grammenoudis2009micromorphic,peshkov2019continuum}}.\\

\subsection{Approach in this work}

In this article, a family of generalized continua with their strain measures is introduced, with an emphasis on explicit definition of the geometry of the system. Particular care has been given to the physical interpretation of each mathematical object and hypothesis, with the set of hypotheses being kept as small as possible.\\

In \cref{sec:geom_structure} the geometrical tools are introduced, with on emphasis on the physical interpretation not always highlighted in the standard literature. Following the usual interpretation of micro-structured continua, the space is modelled as a generic affine bundles $\A$ over a macroscopic space $\AA$ with $\dim\left (\AA\right ) \in \left \{1,2,3\right \}$. In \cref{sect:connection}, Cartan's notion of connection is generalised into the notion of Ehresmann connection \parencite{epstein2010geometrical}, seen as a lifting operator allowing to identify large (\ie{} macroscopic) classical vectors on $\AA$ with vectors on the generalised continua $\A$. In an analogous fashion, the notion of solder form is introduced in \cref{sect:solder}, allowing to identify small (\ie{} microscopic) classical vectors on $\AA$ with vectors on the generalised continua $\A$. These interpretations of the connection and solder form as macroscopic and microscopic identifications are expanded upon in \cref{sect:t_eq_h_plus_v}. In particular, a solder form encodes a notion of scale of the microstructure via its eigenvalues and is therefore analogous to the order parameter used in several theories \cite{reina2016derivation,kroner2001benefits,clayton2017finsler}. In \cref{sect:pseudo_metr}, the new notion of pseudo-metric $-$ a relaxation of the notion of metric with a possibly non-trivial kernel $-$ is introduced.\\

Following a thorough discussion on the physical interpretation of each geometrical object available, a notion of compatibility of the pseudo-metric is introduced in \cref{def:compab}. Physically, the later comes directly from the interpretation of the microscopic space $\A_\ov a$ at $\ov a \in \AA$ as the set of infinitesimally close points $\ov a + \delta \ov a$. Most importantly, an obstruction to the existence of a compatible metric (\ie{} with a trivial kernel) on the whole continuum $\A$ is proven in \cref{lemm:kergf}. In particular, macroscopic vectors $\der{\ov a}$ and microscopic vectors $\der{\delta\ov a}$ are not orthogonal. This is a key fact as this non-orthogonality contains data on the coupling of macroscopic and microscopic structures, as stated in \cref{lemm:kergf,lem:Gf_compat}. The pseudo-metric is to be compared with the notion of Sasaki metric in \cite{clayton2017finsler, nguyen2021geometric, hushmandi2012curvature}. Although a Sasaki metric, by construction, renders the microscopic and macroscopic spaces orthogonal and is therefore not compatible, its restriction on each of those yields a metric which coincides with the compatible pseudo-metric. This micro-metric is also analogous to the notion of Finslerian metric \cite[198]{amari1962theory}, \cite[121]{clayton2017finsler} as it is a $3$-dimensional metric depending on two sets of coordinates.\\

In \cref{sect:conf_pull_back}, following the classical literature \parencite{epstein2010geometrical, GON}, the generic space $\bundle{\A}{\AA}$ is replaced by a a material space $\bundle\B\BB$ and an ambient space $\bundle\E\EE$. A generalisation of the classical macroscopic placement maps $\ov\varphi : \BB \longto \EE$ and $\ov\Fb = \Tr\ov\varphi : \Tr\BB \longto \Tr\EE$ is then sought in order to pull-back the ambient geometry of $\E$ onto $\B$. In \cref{sect:punctual_map}, based on a physical interpretation of the continuum, the punctual map $\ov\varphi : \BB \longto \EE$ is generalised into an \textbf{affine} bundle morphism:
$$\varphi : \fun\B\E{\bmat{\ov X\\ Y}}{\bmat{\ov\varphi\left (\ov X\right )\\\overrightarrow{\varphi^\vr}\left (\ov X\right )\cdot Y + t^\vr\left (\ov X\right )}}$$
This form should be compared with the theory of micromorphic media \parencite{eringen1964nonlinear,eringen1998microcontinuum}, where the transformation is modelled as a \textbf{linear} morphism. Although the micro-spaces are interpreted as infinitesimal neighbourhoods, the microscopic part of $\varphi$ is not required to be the gradient of the macroscopic part. This freedom is necessary in order to allow for torsion (\ie{} dislocations) in the material \cite[8]{peshkov2019continuum}.\\

In \cref{sect:first_order_map}, further discussion on the physical interpretation of a first-order map leads to a generalisation of $\ov \Fb = \Tr \ov \varphi : \Tr\BB \longto \Tr\EE$ as a linear bundle morphism $\Fb \equiv \bmat{\Tr\ov\varphi&\bold 0\\\Fb_\hr^\vr & \overrightarrow{\varphi^\vr}}$ over $\varphi$. This is analogous to the work of \cite{eringen1964nonlinear,eringen1998microcontinuum,mindlin1964micro} with the fundamental difference that $\Fb$ does not have to be the gradient of $\varphi$, only its restrictions on the macroscopic and microscopic spaces do. In \cref{sect:pull_back}, the generalised first-order placement map $\Fb$ is used to pull-back the ambient connection $\gammab$, solder form $\varthetab$ and pseudo-metric $\gf$ onto $\B$, giving $\Gammab$, $\Thetab$ and $\Gf$ respectively. Crucially, and in contrast to several works \cite{kroner1980continuum, peshkov2019continuum, sahoo1984elastic, le1996determination}, the relaxed form of $\Fb$ allows the material connection to bear some curvature, \ie{} disclinations.\\

In \cref{frame_inv_intro}, a generalisation of the notion of frame indifference is formulated, using the notion of generalised Galilean group $\Galf$. In the following section, the group $\Galf$ is expressed as the stabiliser of a set of tensors, paving the way for the main result of this article: the computation of the tensorial invariants of the first-order configuration under the group $\Galf$ \partxt{\cref{th:inv_orb}} which directly give the strain measures on which any frame indifferent function must depend \partxt{\cref{lemm_fact_fram_inv}}. Those invariants are the material micro-metric, connection, solder form and "holonomic" connection. Crucially, in the holonomic case, where $\Fb=\Tr\varphi$, both connections are identical and those invariants correspond to the three invariants computed in \cite{eringen1964nonlinear}, \cite[15]{eringen1998microcontinuum}, \cite{grammenoudis2009micromorphic}. \Cref{sect:disc} contains discussions on the properties of those invariants, their interpretation and a conclusion. Furthermore, these invariants are also similar to those computed in \cite{le1994finite}, which are the elastic metric \parencite[613]{le1996model} and the torsion of the connection.\\

The role of the newly introduced pseudo-metric $\Gf$ is central to this model and is analogous to the role of the Cauchy-Green metric in classical models. \Cref{lemm:kergf,lem:Gf_compat} establish an equality between the kernel of $\Gf$ and the horizontal space of the no-slip connection $\Gammab-\Thetab$ \parencite[7]{nguyen2021tangent}. As a consequence, \cref{th:data_equiv} implies that most of the information is stored in this pseudo-metric. In particular, the compatible pseudo-metric prescribes the notion of lengths and angles (macroscopic, microscopic or mixed alike) and contains the sum of a torsion and a curvature \parencite[8]{nguyen2021tangent}. \Cref{sect:micro-lin} extends those results in the special case of micro-linear material. That is, materials where all micro-spaces have centres which are preserved by the placement map. In such a case, the connection $\Gammab$ is linear and the solder-form $\Thetab$ is uniform on the micro-spaces. As a consequence $\Gammab$ and $\Thetab$ can canonically be extracted from the no-slip connection $\Gammab-\Thetab$ (by taking the linear part). This means that, in this case, the torsion and curvature can be obtained from the pseudo-metric alone. Finally, and perhaps most importantly, \cref{coro:inv_orb} concludes this analysis by stating that in such micro-linear materials, any frame-invariant functional of $\Fb$ $-$ and in particular, the energy $-$ can be expressed in term of the pseudo-metric $\Gf$ alone. This is the case in the works of \fullciteauthors{toupin1964theories,mindlin1964micro,eringen1964nonlinear}{eringen1998microcontinuum,toupin1962elastic}. This last result should be compared to its classical analogue stating that every frame-invariant functional of the macroscopic placement $\ov\Fb$ is expressible as a function of the Cauchy-Green metric $\Gb$ \partxt{which is obtainable from $\Gf$} \parencite[275,283]{GON}.

\section{Geometric structure}

\label{sec:geom_structure}
\label{cartan_vision}

Adopting the vision of \fullciteauthors{cartan1922generalisation,mindlin1964micro,eringen1964nonlinear}{eringen1998microcontinuum}, points of the macroscopic space are endowed with a field of microscopic spaces (one for each macroscopic point). The total space is therefore of dimension $n+k$ $-$ where $n$ is the macroscopic dimension and $k$ the microscopic dimension $-$ and is equipped with a projection onto the macroscopic space of dimension $n$. The micro-spaces are then the set of points sharing a common macroscopic projection. Mathematically, this translates into the notion of projection structure:\\

\begin{defi}{Projection structure}{proj}
	A \underline{projection structure} is the data of:
	\begin{enumerate}
		\it a set $\A$, called the \underline{total space}
		\it a set $\AA$, called the \underline{base space}
		\it a surjective map $\pi_\A : \A \longto \AA$, called the \underline{projection map}
	\end{enumerate}
	
	One then says $\A$ is equipped with a projection structure on $\A$ over $\AA$, synthesised as $\bundle{\A}{\AA}$. Furthermore, if the projection structure on $\A$ is unambiguous then one can also use the following notations and terminologies:
	\begin{enumerate}
		\it	$\overline{\A}$ for its corresponding base space (named $\AA$ in the definition)
		\it $\forall a \in \A,\hfill\overline{a} := \pi_\A(a) \in \overline{\A}\hfill\text{called the projection of $a$}\hfill$
		\it $\forall \overline{a} \in \overline{\A},\hfill\A_\overline{a} := \pi_\A^{-1}\left (\left \{\overline{a}\right \}\right ) := \setof{a\in\A}{\pi_\A(a)=\overline{a}}\hfill\text{called the \underline{fibre at $\overline{a}$}}\hfill$\\
		\it $\forall \UU \subset \AA,\hfill \at{\A}_\UU = \pi_\A^{-1}\left (\UU\right )\hfill\text{called the \underline{restriction} of $\A$ over $\UU$}$\\
		\it any $\sigma : \AA \longto \A$ which is a right inverse of the projection $-$ that is, $\pi_\A\circ\sigma = \Id$ $-$ will be called \hbox{a \underline{section} of $\A$}.
	\end{enumerate}
	
\end{defi}

The total space (of dimension $n+k$) is therefore endowed with a projection structure over the macroscopic space (of dimension $n$). The projection can then be seen as providing the macroscopic part of a point. However, the space is not solely a set but also a smooth variety where one can differentiate objects. One therefore wants the projection structure to be smooth. Furthermore, one needs to be able to have, at least locally, a coordinate system splitting the macroscopic and microscopic parts. This is linked to Cartan's notion of (microscopic) moving frames in \cite{cartan1922generalisation}. Mathematically, these two conditions lead to the notion of affine bundles.

\subsection{Affine bundle}

An affine bundle can be seen, heuristically, as a projection structure whose total space is locally diffeomorphic to a Cartesian product of a base and an affine fibre in a way that preserves the affine structure. Formally, the definition of an affine bundle is a bit more involved and goes as follows:\\

\begin{defi}{Affine bundle}{vector_bundle}
	An \underline{affine bundle} (resp. vector bundle) is the data of:
	\begin{enumerate}
		\it a projection structure $\bundle{\A}{\AA}$ where $\A$ and $\AA$ are both \textbf{smooth\footnote{In this paper, smooth will mean at least continuously differentiable,\ie{} $\C^1$.} connected orientable} real manifolds and $\pi_\A$ is a smooth map.
		\it a real affine (resp. vector) space $\F_\A$, defined up to an affine (resp. linear) automorphism, called the \underline{typical fibre}.
		\it a family of diffeomorphisms $\left (\Psib_\overline{a}\right )_{\ov{a}\in\AA}$, called the \underline{local trivialisations}, such that:
			\quant{\forall \overline{a} \in \AA,~\exists~\UU_\overline{a}~\text{neighbourhood of}~\overline{a},}{&\Psib_\overline{a} : \at{\A}_{\UU_\overline{a}}\longmapsto\UU_\overline{a}\times\F_\A\\
										   &\text{and} \hspace{2em} \pi_\A=\mu_\overline{a}\cdot\Psib_\overline{a} \hspace{1em}\text{on}\hspace{1em}\at{\A}_{\UU_\overline{a}}}
			where $\mu_\overline{a} : \fun{\UU_\overline{a}\times\F_\A}{\UU_\overline{a}}{(\overline{b}, y)}{\overline{b}}$ is the canonical left projection of $\UU_\overline{a}\times\F_\A$ onto $\UU_\overline{a}$.
		\it by construction, the transition map $\Psib_\overline{a}\circ\Psib_\overline{b}^{-1}$ \partxt{defined only when $\UU_\overline{a}\cap\UU_\overline{b}\neq\emptyset$} induces a diffeomorphism of $\F_\A$ onto itself, called a vertical change of frame. The last property is that these diffeomorphisms must be \textbf{affine} (resp. linear).
	\end{enumerate}
	
	The set of all vertical changes of frames generates a group $\G_\A$ for the composition, acting on $\F_\A$ in an affine (resp. linear) way. That is, $\G_\A$ is a sub-group of $\Aff\left (\F_\A\right )$ \partxt{resp. $\GL\left (\F_\A\right )$}. The group $\G_\A$ is called the \underline{structure group} of the affine (resp. vector) bundle $\A$.\\
	
\end{defi}
		
Notice that the third property implies that all fibres $\A_\overline{a}$ of $\A$ are isomorphic to $\F_\A$. This means that the typical fibre can be retrieved from the projection structure alone. In a similar fashion to how one would quickly forget about the specific atlas of a manifold, only the existence of such trivialisations shall be important, not their specific values. In particular, the notion of trivialising coordinates, defined here after, will often be used:

\begin{defi}{Trivialising coordinates}{triv_coord}
	A \underline{trivialising affine (resp. linear) coordinate system} on an affine (resp. vector) bundle $\bundle\A\AA$ is a smooth bijective map
\renewcommand{\tmp}{\cf}
\[
	\tmp : \A \longto \RR^{\dim(\AA)}\times\RR^{\dim\left (\F_\A\right )}
\]
such that, there exist
\begin{enumerate}
	\it a coordinate system $\ov \tmp : \AA \longto \RR^{\dim(\AA)}$ called the \underline{horizontal coordinates}
	\it for every local trivialisation ${\Psib_{\ov{a}} : \at{\A}_{\UU_\ov{a}} \longto \UU_\ov{a}\times\F_\A}$ around $\ov a \in \AA$, an affine (resp. linear) coordinate system ${\at{\tmp^\vr}_{\Psib_{\ov{a}}} : \F_\A \longto \RR^{\dim\left (\F_\A\right )}}$ on $\F_\A$ such that:
	\algn{
		\tmp &= \left (\ov\tmp_\ov a \times \at{\tmp^\vr}_{\Psib_{\ov{a}}} \right ) \circ \Psib_\ov a
	}
\end{enumerate}

The projection of $\tmp$ on the second element gives $\tmp^\vr : \A \longto \RR^{\dim\left (\F_\A\right )}$. This is a coordinate system on each fibre $\A_\ov a$ (for $\ov a \in \AA$ fixed) called the \underline{vertical coordinates}.\\

A \underline{trivialising affine (resp. linear) frame} on $\bundle{\Tr\A}{\A}$ is a frame obtained by differentiation:
\[
	\Tr\tmp \equiv \left (\tmp, \dr{\tmp}\right ) : \Tr\A \longto \RR^{\dim(\AA)}\times\RR^{\dim\left (\F_\A\right )}\times\RR^{\dim(\AA)}\times\RR^{\dim\left (\F_\A\right )}
\]
of a trivialising affine (resp. linear) coordinate system $\tmp$ on $\bundle\A\AA$. By duality, the basis vectors $\der{\tmp} \subset \RR^{\dim\left (\F_\A\right )}$  can be used interchangeably with the the coordinate system $\dr\tmp$. For readability reason, the former shall be preferred.\\

The projection on the left elements gives $\tmp : \Tr\A \longto \RR^{\dim(\AA)}\times\RR^{\dim\left (\F_\A\right )}$, called the \underline{punctual coordinates} associated to the frame. The projection on the other elements gives $\der{\tmp} : \Tr\A \longto \RR^{\dim(\AA)}\times\RR^{\dim\left (\F_\A\right )}$ \partxt{or equivalently $\dr\tmp$} which is a coordinate system on every $\Tr_a\A$ (for $a \in \A$ fixed) called the \underline{vectorial coordinates}.\\
\end{defi}

\subsubsection*{Tangent bundle}

	Particular attention should be given to the tangent bundle $\bundle{\Tr\A}{\A}$\footnote{The convention used is the mathematical one, where $\Tr\A$ is the set of fixed vectors. That is, the set of couples $(a, \ub)$ where $a \in \A$ and $\ub$ is a vector tangent to $\A$ at $a$. One therefore has $\dim(\Tr\A) = 2\dim(\A)$.} in the case where $\bundle{\A}{\AA}$ is an affine bundle. $\Tr\A$ can then be endowed with two different projection structures\\

Assume trivialising affine coordinate systems are given on $\A$ and $\Tr\AA$. First, the projection structures on $\A$ and $\Tr\AA$ are unambiguous:

\byalgn{
	\text{\underline{the punctual projection}}&\\
	\pi_{\Tr\AA} &: \fun{\Tr\AA}{\AA}{\bmat{\ov x\\ \delta \ov x}}{\ov x}
}{
	\text{\underline{the macroscopic projection}}&\\
	\pi_\A &: \fun{\A}{\AA}{\bmat{\ov x\\ y}}{\ov x}
}

where, underlined, are the name they are given. Secondly, those projections can both be generalised to $\Tr\A$:
\begin{enumerate}
	\it the first one by substituting $\AA \to \A$, leading to a vector bundle over $\A$.
		\algn{
			\text{\underline{the punctual projection}}&
			&\pi_{\Tr\A} &: \fun{\Tr\A}{\A}{\bmat{\bmat{\ov x\\ y}\\\bmat{\delta\ov x\\ \delta y}}}{\bmat{\ov x\\ y}}
		}
	\it the second one, by differentiating, leading to a projection structure\footnote{The projection $\Tr\pi_\A$ fails to induce an affine bundle structure since the fibres are not affine spaces. However, it induces a more general notion $-$ out of the scope of this article $-$ of a fibre bundle structure over $\Tr\AA$ with typical fibre $\Tr\F_\A$ \parencite[215]{epstein2010geometrical}.}.
		\algn{
			\text{\underline{the macroscopic projection}}&
			&\Tr\pi_\A &: \fun{\Tr\A}{\Tr\AA}{\bmat{\bmat{\ov x\\ y}\\\bmat{\delta\ov x\\ \delta y}}}{\bmat{\ov x\\\delta \ov x}}
		}
\end{enumerate}

All of this is summarised in the following diagram:

\renewcommand{\tmp}[2]{$\mat{#1\\\raisebox{.5em}{\scalebox{0.75}{$#2$}}}$}

\begin{center}
\begin{tikzpicture}[->,>=stealth',shorten >=1pt,auto,node distance=3.5cm,
                    semithick]
  \tikzstyle{every state}=[fill=none,draw=none,text=black]

  \node[state]		(TA) 							{\tmp{\Tr\A}{(a,\ub)}};
  \node[state]		(A)		[below left of = TA]	{\tmp{\A}{a}};
  \node[state]		(TAA) 	[below right of = TA]	{\tmp{\Tr\AA}{\left (\overline{a},\overline{\ub}\right )}};
  \node[state]		(AA) 	[below right of = A]	{\tmp{\AA}{\overline{a}}};

  \draw [-{To}{To}]%
  		(TA) 	edge					node [above left] 	{$\pi_{\Tr\A}$} (A)
  		(TA) 	edge					node [above right] 	{$\Tr\pi_{\A}$} (TAA)
  		(A) 	edge					node [below left] 	{$\pi_{\A}$} (AA)
  		(TAA) 	edge					node [below right] 	{$\pi_{\Tr\AA}$} (AA);
\end{tikzpicture}
\end{center}

\label{overline_notation}

In this diagram and the remaining of this article, the notation $\overline{\ub}$ for the \underline{macroscopic part} $\overline{\ub} = \Tr_a\pi_\A\left(\ub\right ) \in \Tr_\overline{a}\AA$ of $\ub \in \Tr_a\A$ was chosen \partxt{not to be confused with $a = \pi_{\Tr\A}(\ub) \in \A_\ov a$}.\\

Since, for $\ov a \in \AA$, $\Tr_\ov a\AA$ is a vector space it has a zero $0_{\ov a} \in \Tr_{\ov a}\AA$. This allows us to define a special fibre of the $\bundle[\Tr\pi_\A]{\Tr\A}{\Tr\AA}$ vector bundle structure as follows:

\[
	\Vr_a\A = \left (\Tr_a\pi_\A\right )^{-1}\left (\left \{0_{\ov a}\right \}\right ) \subset \Tr_a\A
\]

This defines a vector bundle $\bundle[\pi_{\Tr\A}]{\Vr\A}{\A}$ over $\A$ (and not $\Tr\AA$) called the \underline{vertical bundle} which can also be defined in a more concise way as $\Vr\A = \ker\left (\Tr\pi_\A\right )$.

\begin{rema}
	\label{rem:iso_vert_Tfib}
	Let $\bundle\A\AA$ be an affine bundle, then one has the following canonical isomorphisms:
		\quant{\forall a \in \A,}{\Vr_a\A &\simeq \Tr_a \A_\ov a}
	Indeed, let $\left (\ov \xb, \yb ,\der{\ov\xb}, \der\yb\right )$ be a generic trivialising affine frame on $\Tr\A$. Then it induces a coordinate system $\left (\ov\xb(\ov a), \yb\right )$ on $\A_\ov a$, where $\ov\xb(\ov a)$ are the (fixed) coordinates of $\ov a \in \AA$. Differentiating it, one obtains a coordinate system $\left (\ov\xb(\ov a), \yb, \bold0, \der\yb\right )$ on $\Tr\A_\ov a$. Specifying it at $a \in \A_\ov a$, one obtains the coordinate system $\left (\ov\xb(\ov a), \yb(a), \bold0, \der\yb\right )$ on $\Tr_a\A_\ov a$ in which only $\der\yb$ is free.\\
	
	Going the other-way around, restricting $\left (\ov \xb, \yb ,\der{\ov\xb}, \der\yb\right )$ on $\Vr\A$ gives the coordinate system $\left (\ov \xb, \yb ,\bold 0, \der\yb\right )$ on $\Vr\A$. This is because $\Tr\pi_\A$ is simply $\left (\ov\xb, \der{\ov\xb}\right )$ in those coordinates (since they are trivialising). Specifying the coordinates at $a \in \A$, one obtains the coordinate system $\left (\ov\xb(\ov a), \yb(a), \bold0, \der\yb\right )$ on $\Vr_a\A$ in which only $\der{\yb}$ is free. This provides an isomorphism between $\Tr_a\A_\ov a$ and $\Vr_a\A$ which does not depend on the choice of frame.
\end{rema}

\subsubsection*{Fibre product along a common base}

A useful construct is the fibre product of two affine (resp. vector) bundles along a common base, defined as follows:

\begin{defi}{Fibre product of bundles}{dir_sum}
	Let $\bundle[\pi^1]{\A^1}\AA$ and $\bundle[\pi^2]{\A^2}\AA$ be two affine (resp. vector) bundles over the same base $\AA$. The (fibre) product of ${\A^1}$ and ${\A^2}$ along $\AA$ is the affine (resp. vector) bundle
	\[
		\bundle[~~\pi_\times~~]{\A^1\times_\AA\A^2}{\AA}
	\]
	with
	\begin{enumerate}
		\it total space $\A^1\times_\AA\A^2 := \setof{\left (a^1, a^2\right ) \in \A^1\times\A^2}{\pi^1\left (a^1\right ) = \pi^2\left (a^2\right )}$
		\it projection $\pi_\times : \fun{\A^1\times_\AA\A^2}{\AA}{\left (a^1, a^2\right )}{\pi^1\left (a^1\right ) \hspace{2em} \left [= \pi^2\left (a^2\right )\right ]}$
	\end{enumerate}
	and where the transition maps are the Cartesian products of the transitions maps of $\A^1$ and $\A^2$.
\end{defi}

As a direct consequence of the construction, one has that the fibre at $\overline{a} \in \AA$ is the Cartesian product of the fibres at $\overline{a}$:
	\[
		\left (\A^1\times_\AA\A^2\right )_\overline{a} = \A^1_\overline{a}\times \A^2_\overline{a}
	\]
which directly implies that the same holds for the typical fibre:
	\[
		\F_{\A^1\times_\AA\A^2} = \F_{\A^1}\times \F_{\A^2}
	\]
Since the transition maps of the fibre product are the Cartesian products of the transitions maps, one has that the structure group of the product is a subgroup of the product of the structure groups (or, equivalently, the direct sum):
	\[
		\G_{\A^1\times_\AA\A^2} \subset \G_{\A^1}\times \G_{\A^2} \subsetneq \Aff\left (\F_{\A^1}\times \F_{\A^2}\right )
	\]

\subsection{Moving frames and connection}
\label{sect:connection}

In \cite{cartan1922generalisation} Cartan describes a notion of frames, depicted as a \say{trièdre trirectangle} at each $\xr \in \RR^3$ of the affine space $\RR^3$. Such frames allow to \say{identify infinitely close points}, that is, the frame at $\xr$ is a coordinate system of the infinitesimal space $\Tr_\xr\RR^3$. More precisely, \cite{cartan1922generalisation} describes it as an affine basis composed of a \say{rotation} and a \say{translation}. Cartan's space is therefore a principal bundle\footnote{The projection structure is not an affine as the fibres are not affine spaces but groups. More technically, this structure corresponds to the notion of principal bundle \parencite{epstein2010geometrical}, which this paper is not interested in.} $\operatorname{AFrame}(\RR^3)\longto\RR^3$ whose typical fibre is $\Aff(\RR^3)$ $-$ the set of affine transformations of $\RR^3$ seen as affine frames of $\RR^3$ $-$ which also corresponds to its structure group.\\

Furthermore, frames at close points can be compared one to another\footnote{\say{repérer, par rapport au trièdre d'origine $\Ar$, tout trièdre de référence ayant son origine $\Ar'$ voisine de $\Ar$} $-$ \cite[593]{cartan1922generalisation}} meaning a frame at a point can be transported into a frame at another close point through a translation and a rotation. This process is called a parallel transport and leads to the notion of principal Ehresmann connection.\\

Since transforming affine frames is the same as transforming points in an affine manner, there exists a one to one correspondence between parallel transport of affine frames of $\operatorname{AFrame}(\RR^3)$ and affine parallel transport of points of $\bundle{\Tr\RR^3}{\RR^3}$. Therefore, while Cartan used the principal bundle $\operatorname{AFrame}(\RR^3)$, this paper shall use the affine bundle $\Tr\RR^3$ directly. The notion of principal Ehresmann connection then translates to the analogous notion of affine Ehresmann connection, defined as follows:\\

\begin{defi}[breakable]{Ehresmann connection}{connection}
	Let $\bundle\A\AA$ be an affine bundle and $\bundle[\Tr\pi_\A]{\Tr\A}{\Tr\AA}$ its associated tangent bundle. An affine\footnote{For the remaining of this paper, the terms of "connection" and "Ehresmann connection" will, unless explicitly mentioned, refer to the notion of affine Ehresmann connection.} Ehresmann connection $\gammab$ on $\A$ is a smooth map ${\gamma : \A \times_\AA \Tr\AA \longto \Tr\A}$ satisfying the following properties:
	\begin{enumerate}
		\it for all $a \in \A$, the image of the \underline{horizontal lift operator} $\gammab_a$ at $a$ is a subset of $\Tr_a\A$:
			\[
				\gammab_a : \fun{\Tr_\ov{a}\AA}{\Tr_a\A}{\ov\ub}{\gammab\left (a, \ov\ub\right )}
			\]
		\it for all $a \in \A$, ~$\gammab_a$ is \textbf{linear} and is a \textbf{section} of $\Tr\A$: $~\hfill\Tr\pi_\A\cdot\gammab_a = \Id\hfill~\hfill$
		\it for all $\ov\ub \in \Tr_\ov a\AA$ and every trivialising affine frame on $\Tr\A$ (see \cref{def:triv_coord}),\\
		the vectorial coordinate of $\gammab(a, \ov\ub) \in \Tr_a\A$ is \textbf{affine} in the vertical coordinate of $a$.
		\it the field $a \longmapsto \gammab_a$ is \textbf{smooth}, as defined by Epstein in \cite[240,246]{epstein2010geometrical}.
	\end{enumerate}
	The value $\gammab_a\cdot\overline{\ub} = \gammab\left (a, \ov{\ub}\right )$ is called the \underline{horizontal lift of $\overline{\ub}$} at $a$. The set of all connections on $\A$ will be denoted
\algn{
	\opp{Con}\A &\subset \smooth{\A \times_\AA \Tr\AA}{\Tr\A}
}
\end{defi}

\renewcommand{\tmp}{\rho}
The \underline{parallel transport} $\gammab\left (\tmp\right )_s^t : \A_{\tmp(s)} \simeq \A_{\tmp(t)}$ along a $\C^1$ path $\tmp$ in $\AA$ is then defined as the integration of the flow obtained by lifting $\Tr\tmp$ by $\gammab$. That is:

\[
	\gammab\left (\tmp\right )_s^t : \fun{\A_{\tmp(s)}}{\A_{\tmp(t)}}{a}{a + \int_s^t \gammab\cdot\Tr_u\tmp~ \dr u}
\]

Parallel transport is illustrated in \cref{fig_con} as the dotted blue paths. Another tangent notion is the notion of \underline{covariant derivative}, defined for a smooth section $\sigmab : \AA \longto\A$ on $\A$ and a vector $\overline\ub \in \Tr_\overline{a}\AA$ of the base as
\[
	\nabla^\gammab_{\overline\ub}~\sigmab = \left (\Id-\gammab_{\sigmab\left (\overline{a}\right )}\cdot\Tr\pi\right )\cdot\Tr_\overline{a}~\!\sigmab\cdot\overline{\ub}
\]
This is, when $\gammab$ is linear, Koszul's definition of a connection\footnote{See \cite[p.~3-5]{nguyen2021geometric} for a definition of a connection via its covariant derivate or \cite[255]{epstein2010geometrical} for a derivation of the formula in the $\A = \Tr\AA$ case.}. This is illustrated in \cref{fig_con} as a red vertical vector \partxt{equal to the difference between the orange and the blue one}. One then has the following result:\\

\begin{lemm}
	\label{lemm:lemm_equiv_con}
	Let $\gammab$ be a \textbf{linear} connection on $\bundle\A\AA$. Then, the following data are equivalent (\ie~any one can be retrieve from any other):
	\begin{enumerate}
		\itn The horizontal lifting operators $\setof{\gammab_a : \Tr_\overline{a}\AA \longto \Tr_a\A}{a \in \A}$
		\itn The \underline{horizontal spaces} $\setof{\Hr^\gammab_a = \gammab_a\left (\Tr_\overline{a}\AA\right ) \subset \Tr_a\A}{a \in \A}$
		\itn The parallel transport maps $\setof{\gammab\left (\tmp\right )}{\tmp\in \C^1\left (\intcc01, \AA\right )}$
		\itn The covariant derivatives $\setof{\nabla^\gammab_\overline{\ub}}{\overline{\ub}\in \Tr\AA}$ 
	\end{enumerate}
\end{lemm}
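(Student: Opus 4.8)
The plan is to route all four equivalences through the single \say{master} datum (1): since (2), (3) and (4) are each \emph{defined} from the horizontal lift operators $\gammab_a$, the implications $(1)\Rightarrow(2)$, $(1)\Rightarrow(3)$ and $(1)\Rightarrow(4)$ hold essentially by construction, and the whole content lies in the three reverse recoveries. The structural fact underpinning every step is the transversal splitting
\[
	\Tr_a\A = \Hr^\gammab_a \oplus \Vr_a\A ,
\]
which I would establish first. Since $\gammab_a$ is \textbf{linear} and satisfies $\Tr_a\pi_\A\circ\gammab_a = \Id$, it is injective, so $\dim\Hr^\gammab_a = \dim\AA$; moreover $\Hr^\gammab_a\cap\Vr_a\A = \{0\}$, because $v = \gammab_a(\ov\ub)\in\ker\Tr_a\pi_\A$ forces $\ov\ub = \Tr_a\pi_\A(v) = 0$ and hence $v=0$. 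A dimension count against $\dim\Vr_a\A = \dim\F_\A$ then yields the direct sum.

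For $(2)\Leftrightarrow(1)$: the forward direction is the very definition $\Hr^\gammab_a = \gammab_a(\Tr_\ov a\AA)$. Conversely, the splitting shows that $\Tr_a\pi_\A$ restricts to a linear isomorphism $\Tr_a\pi_\A|_{\Hr^\gammab_a} : \Hr^\gammab_a \longto \Tr_\ov a\AA$, and I would recover the lift as its inverse,
\[
	\gammab_a = \left (\Tr_a\pi_\A|_{\Hr^\gammab_a}\right )^{-1},
\]
which is the unique linear section of $\Tr_a\pi_\A$ with image $\Hr^\gammab_a$.

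For $(3)\Leftrightarrow(1)$: the forward direction is the integral definition of $\gammab(\rho)_s^t$; here I would invoke the \textbf{linearity} of $\gammab$ to guarantee that the defining flow obeys an ODE that is affine in the fibre coordinate, so that Cauchy--Lipschitz yields global existence, uniqueness and smooth dependence on the initial fibre point. This is precisely where the hypothesis is used and is the step I expect to be the main technical obstacle. Conversely, given all transport maps, I would differentiate at the starting time: for any $\C^1$ curve $\rho$ with $\rho(0)=\ov a$ and $\Tr_0\rho = \ov\ub$,
\[
	\gammab_a(\ov\ub) = \frac{\dr}{\dr t}\Big|_{t=0}\gammab(\rho)_0^t(a) = \gammab\left (a, \Tr_0\rho\right ) = \gammab_a(\ov\ub),
\]
the middle equality being the differentiation of the integral definition at $t=0$; this simultaneously shows the value depends only on $\ov\ub$ and not on the representative $\rho$.

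For $(4)\Leftrightarrow(1)$: the forward direction is the Koszul formula. For the converse, using $\pi_\A\circ\sigmab = \Id$ (hence $\Tr_\ov a\pi_\A\circ\Tr_\ov a\sigmab = \Id$) the defining formula rearranges to
\[
	\gammab_{\sigmab(\ov a)}(\ov\ub) = \Tr_\ov a\sigmab\cdot\ov\ub - \nabla^\gammab_{\ov\ub}\sigmab .
\]
Given $a\in\A$ and $\ov\ub\in\Tr_\ov a\AA$, I would pick any local section $\sigmab$ with $\sigmab(\ov a)=a$ — which exists by local triviality of the affine bundle — and read $\gammab_a(\ov\ub)$ off the right-hand side. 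Independence of the choice of $\sigmab$ is automatic, since every admissible section returns the same value $\gammab_a(\ov\ub)$ by the identity above. Chaining $(2)\Rightarrow(1)$, $(3)\Rightarrow(1)$ and $(4)\Rightarrow(1)$ with the three forward implications then closes all the equivalences.
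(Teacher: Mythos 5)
The paper never proves this lemma: it is stated as a known, classical fact, and the only surrounding discussion is which of the four forms will be used later and how the equivalence degrades in the affine and fully non-linear cases. So there is no proof of record to compare against; your proposal has to stand on its own, and it does. The architecture (establish the splitting $\Tr_a\A = \Hr^\gammab_a \oplus \Vr_a\A$, then route everything through the lifting operators) is the natural one. Your recovery for $(2)\Rightarrow(1)$, namely that $\gammab_a$ is the inverse of the isomorphism $\Hr^\gammab_a \longto \Tr_\ov{a}\AA$ induced by $\Tr_a\pi_\A$, is exactly right, and it is the uniqueness of a linear section with prescribed image that makes $(2)$ lossless. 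For $(3)\Leftrightarrow(1)$ you correctly locate where the hypothesis on $\gammab$ matters: linearity (or affineness) of the lift in the fibre coordinate makes the transport ODE linear (affine) in the fibre, so Cauchy--Lipschitz gives existence on all of $\intcc01$; this is precisely what the paper alludes to when it warns that in a more general setting \say{$(3)$ may diverge}. For $(4)\Leftrightarrow(1)$, the rearrangement of Koszul's formula using $\Tr\pi_\A\cdot\Tr\sigmab = \Id$ is correct, and the injectivity interpretation (two connections with the same covariant derivatives have the same lifts) is the right reading of \say{can be retrieved}.

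Two small repairs you should make. First, the displayed chain in your step $(3)$ literally reads $\gammab_a(\ov\ub) = \cdots = \gammab_a(\ov\ub)$, which is typographically circular; what the argument needs (and what you clearly intend) is the single statement that the $t$-derivative at $t=0$ of $t \mapsto \gammab(\rho)_0^t(a)$ equals $\gammab_a\cdot\Tr_0\rho$, from which equality of transports for all $\rho$ forces equality of lifts. Second, the paper's covariant derivative is defined on \emph{global} sections $\sigmab : \AA \longto \A$, whereas you invoke a \emph{local} section through $a$; to read $\gammab_a(\ov\ub)$ off the data $(4)$ as stated, you should note that a global section through any prescribed point $a$ exists (for a vector bundle, damp a local section through $a$ by a bump function and extend by zero; for an affine bundle, glue with a partition of unity), which is where local triviality genuinely enters. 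Neither point affects the validity of the approach.
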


Of all those equivalent definitions, only the forms $(1)$ and $(2)$ shall be used in this paper, as they are the most convenient when dealing with linear/affine operators. The forms $(3)$ and $(4)$ are perhaps the most commonly used in the literature, in particular in the field of general relativity. It should be duly noted that, in the affine case, while $(1)$, $(2)$ and $(3)$ are well-defined, $(4)$ loses its linearity and is therefore not a derivative any-more. In an even broader setting, not in the scope of this paper, where connections are not even affine in the punctual vertical coordinates, only $(1)$ and $(2)$ subsist as $(3)$ may diverge.

\begin{figure}
    \centering
    \def\svgwidth{0.75\textwidth}
    \input{./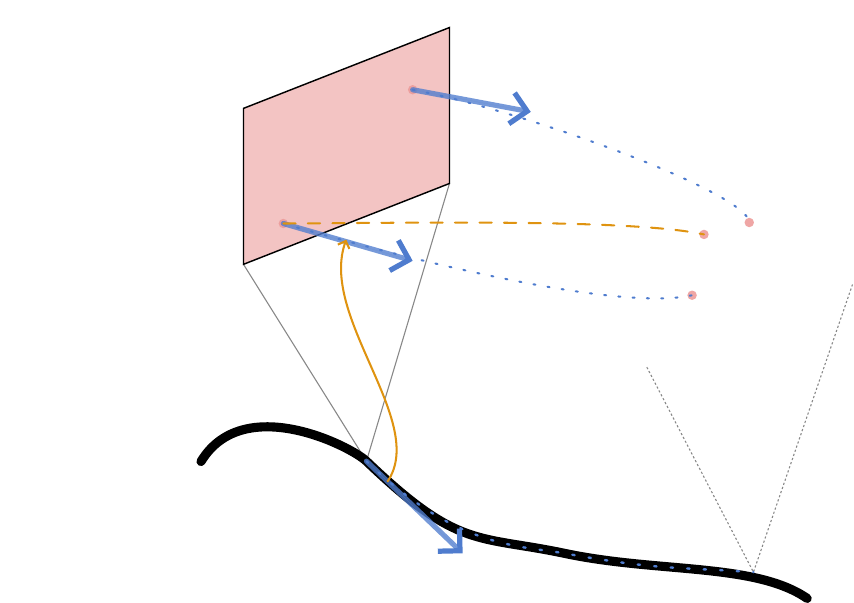_tex}
    \caption{Illustration of \cref{lemm:lemm_equiv_con} with $a \in \A$, $\ov\ub\in\Tr_\ov a\AA$, $\sigma : \AA \longto \A$ a section of $\A$, $\rho : \intcc01 \longto \AA$ a path in $\AA$ and the notation $\hr_\gammab = \gammab \cdot \Tr\pi$. {\color{macro} Blue} indicates macroscopic/horizontal quantities, {\color{micro} red} indicates microscopic/vertical quantities and {\color{gold} gold} indicates mixed quantities.}
	\label{fig_con}
\end{figure}

\subsection{Solder form}
\label{sect:solder}

Since the model seeks to be general, the space will not to be restricted $\Tr\RR^3$. Nevertheless, the fibres must be interpretable as infinitesimal neighbourhoods of the base points. Mathematically, this translates into the notion of solder form, defined as follows:

\begin{defi}[unbreakable]{Solder form}{solder}
	Let $\bundle\A\AA$ be an affine bundle and $\bundle[\Tr\pi_\A]{\Tr\A}{\Tr\AA}$ its associated tangent bundle. A solder form $\varthetab$ on $\A$ is a \textbf{smooth injective} map $\varthetab : \A \times_\AA \Tr\AA\longto\Tr\A$ satisfying the following properties:
	\begin{enumerate}
		\it for all $a \in \A$, the \underline{vertical lift} $\varthetab_a$ at $a$ is \textbf{linear} and its image is a subset of $\Vr_a\A$:
			\[
				\varthetab_a : \fun{\Tr_\ov{a}\AA}{\Vr_a\A}{\ov \ub}{\varthetab\left (a, \ov \ub\right )}
			\]
			in particular: $~\hfill\Tr\pi_\A\cdot\varthetab_a = \mathbf{0}\hfill~\hfill$
		\it for all $\ov\ub \in \Tr_\ov a\AA$ and every trivialising affine frame on $\Tr\A$ (see \cref{def:triv_coord}),\\
		the vectorial coordinate of $\varthetab(a, \ov\ub) \in \Tr_a\A$ is \textbf{affine} in the vertical coordinate of $a$.
	\end{enumerate}
	
	where $\Vr_a\A = \ker \Tr_a\pi_\A \subset \Tr_a\A$ is the vertical space at $a \in \A$, isomorphic to $\Tr_a\A_\overline{a}$ \partxt{see \cref{rem:iso_vert_Tfib}}. The set of all solder forms on $\A$ will be denoted
\algn{
	\opp{Sold}\A &\subset \smooth{\A \times_\AA \Tr\AA}{\Tr\A}
}
	Since $\vartheta$ is injective, the following notation is introduced:
	\algn{
		\varthetab^{-1} &: \fun{\Vr\A\supset\Imr\left (\varthetab\right )}{\Tr\AA}{\ub}{\left [\varthetab_{\pi_{\Tr\A}(\ub)}\right ]^{-1}\left (\ub\right )}
	}
\end{defi}

\begin{figure}
    \centering
    \duo{
	    \def\svgwidth{\textwidth}
    	\input{./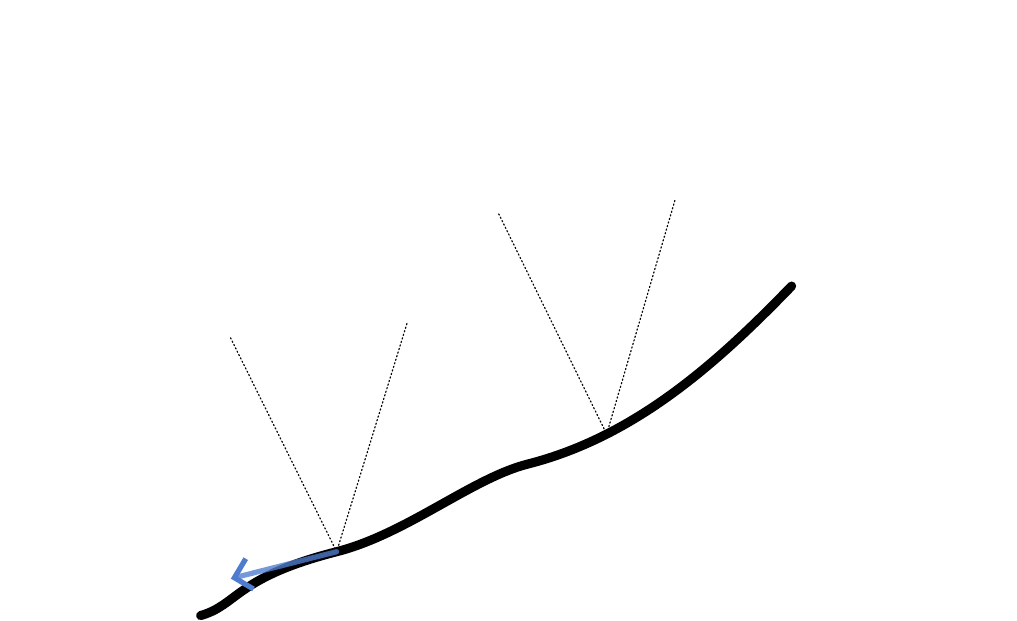_tex}
    }{
	    \def\svgwidth{\textwidth}
	    \input{./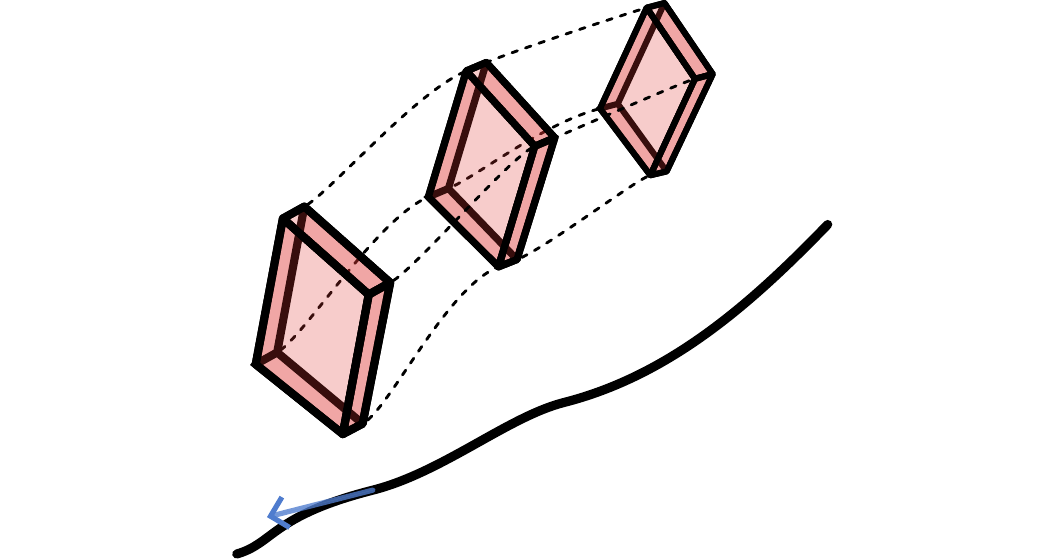_tex}
    }
    \caption{Illustrations of \cref{def:solder} for a Cosserat media (left side) and a Timoshenko beam (right side). {\color{macro} Blue} indicates macroscopic/horizontal quantities, {\color{micro} red} indicates microscopic/vertical quantities and {\color{gold} gold} indicates mixed quantities.}
\end{figure}

Being injective, the existence of a solder form on $\A$ implies $\dim\left (\F_\A\right ) \ge \dim(\AA)$. The lack of equality in general allows to include spaces where the fibres are infinitesimal neighborhoods of the base points but in a larger space. For example the $3\Dr$ neighborhood of a curve in $\RR^3$ where the base would be the $1\Dr$ curve.\\

In the case where $\dim\left (\F_\A\right ) = \dim(\AA)$, for example when $\A \simeq \Tr\AA$, any solder form will be bijective at any given point. This allows one to project microscopic (\ie{} vertical) vectors onto the macroscopic world using the microscopic projection $\varthetab^{-1}$:

\[
	\varthetab^{-1}: \fun{\Vr\A}{\Tr\AA}{\ub}{\varthetab_{\pi_{\Tr\A}(\ub)}^{-1}\cdot\ub}
\]

\subsection{\texorpdfstring{$\Tr\A=\Hr\A\oplus_\A\Vr\A$}{TA=HA+VA} and the block-wise decomposition}
\label{sect:t_eq_h_plus_v}

\subsubsection*{On the $\Tr\A=\Hr\A\oplus_\A\Vr\A$ decomposition}

Let $\gammab$ be a connection on an affine bundle $\bundle\A\AA$. As quickly mentioned in \cref{lemm:lemm_equiv_con}, its image is called the \underline{horizontal space} $\Hr^\gammab\A = \Imr(\gammab) \subset \Tr\A$ which is a vector bundle over $\A$ with projection $\pi_{\Tr\A}$. By definition, one has:

\vbox{
\algn{
	\Hr_a^\gammab\A \cap \Vr_a\A &= \setof{\gammab_a\left (\overline{\ub}\right )}{\overline{\ub} \in \Tr\AA~\text{and}~\Tr\pi_\A\left (\gammab_a\left (\overline{\ub}\right )\right ) = 0}\\
			&= \setof{\gammab_a\left (\overline{\ub}\right )}{\overline{\ub} \in \Tr\AA~\text{and}~\overline{\ub} = 0}\\
			&= \left \{\gammab_a\left (\bold{0}\right )\right \}\\
			&= \left \{0\right \} \subset \Tr_a\A
}
}

meaning that:

\quant{\forall a \in \A,}{\Tr_a\A = \Hr_a\A \oplus \Vr_a\A}

Which decomposes the total tangent space $\Tr_a\A$ at $a\in \A$ into a direct sum of the horizontal space $\Hr_a\A\simeq\Tr_a\AA$ and the \underline{vertical space} $\Vr_a\A\simeq\Tr_a\A_\overline{a}$.  A connection can therefore be seen as an horizontal lift while a solder form would be a vertical one. This interpretation is similar to the notion of \say{internal observer} described by \fullciteauthor{kroner1980continuum}, allowing to \say{[jump] from one atom to the next} $-$ this is the role of the connection through parallel transport $-$ and \say{distinguish crystallographic directions, \ie{} [always know] what is straight-on, to the left, upwards, \textit{etc}.} $-$ this is the role of the solder form through the lifting of a frame.\\

On one hand, the horizontal space $\Hr_a\A$ $-$ isomorphic to $\Tr_a\AA$ through $\Tr\pi_\A$ $-$ will therefore be physically interpreted as the set of macroscopic vectors at $a$. On the other hand, the vertical space $\Vr_a\A$ $-$ isomorphic to $\Tr_a\A_\overline{a}$ (see \cref{rem:iso_vert_Tfib}) $-$ will be physically interpreted as the set of microscopic vectors at $a$. Furthermore, projections onto the horizontal and vertical spaces are also provided by the connection. The horizontal projection associated to $\gammab$ is:
\[
	\hr_\gammab : \fun{\Tr\A}{\Hr^\gammab\A}{\ub}{\gammab_{\pi_{\Tr\A}(\ub)}\cdot\Tr\pi_\A\cdot\ub}
\]
while the vertical projection is its complementary:
\[
	\vr_\gammab = \Id - \hr_\gammab: \fun{\Tr\A}{\Vr\A}{\ub}{\ub-\gammab_{\pi_{\Tr\A}(\ub)}\cdot\Tr\pi_\A\cdot\ub}
\]\\

\label{scale_discuss}
Unit-wise, one has that, while the connection is dimensionless (it maps macro. to macro.), the solder form has mixed unit $\left [\frac{\mathrm{micro.}}{\mathrm{macro.}}\right ]$. In this paper, $\left [\mathrm{micro.}\right ]$ and $\left [\mathrm{macro.}\right ]$ have the dimension of a length. Let $\ell$ and $L$ be respectively the microscopic and macroscopic characteristic scales. Then, informally, the solder form scales \partxt{potentially anisotropically\footnote{Scaling here should be interpreted in the sense of $\det\left (\vartheta\right )=\zeta^3$ (which is ill-defined) and not $\vartheta = \zeta\cdot\Id$.}} by a factor $\zeta = \frac{\ell}{L}$, where the real $\zeta \in \RR^*_+$ is dimensionless. Such a ratio appears in \cite[87]{nguyen2021geometric}, \cite[14]{nguyen2021tangent} and \cite[235]{reina2016derivation} and is an example of an order parameter of a model \cite{kroner2001benefits}. The horizontal projection can then be seen heuristically as a rounding to the closest "multiple" of $\ell$ (or $\zeta$ if $L$ is normalised to $1$) and the vertical part as its remainder.

\subsubsection*{Reference connection and block-wise decomposition}

A connection will most of the time be endowed with a physical interpretation corresponding to the one depicted in \cref{sect:connection}. Nevertheless, as explained in the previous subsection a connection is, mathematically, not much more than a smooth (linear) decomposition of the tangent space as a direct sum of an horizontal and vertical space. Such a decomposition can be very useful for computation, even if arbitrary. This is analogous to how specifying a frame or basis helps the computation, even if no canonical one exists. When such a non-necessarily physical connection exists, it shall be called a reference connection. Formally, one defines the following:\\

\begin{defi}{Reference connection}{ref_con}
	Let $\bundle\A\AA$ be an affine bundle. The \underline{reference connection} of $\A$ is a connection ${\RefCon\A : \A \times_\AA \Tr\AA \longto \Tr\A}$ on $\A$ which is either
	\begin{enumerate}
		\it canonical: its value is prescribed when $\A$ is first defined and never changed afterward
		\it free: its value is not set (\ie{} it is a free variable)
	\end{enumerate}
	
	In the second case, expressions depending on $\RefCon\A$ can therefore be seen as functions of a connection and equalities or other statements must remain true whatever the value of that connection argument.\\
	
	Regardless of its freedom or canonicity, the reference connection of a space solely depends on the space and is stable by restriction. That is, $\RefConName$ is a well-defined functor verifying:
	\quant{\forall \UU \subset \AA,}{\RefCon{\at{\A}_\UU} &=& \at{\RefCon\A}_{\UU} : \at{\A}_\UU \times_\UU \Tr\UU \longto \Tr\at{\A}_\UU}
\end{defi}

When one has a connection $\gammab$ on an affine bundle $\A$, one also has a decomposition $\Tr\A = \Hr^\gammab\A \oplus_\A \Vr\A$ along with projections $\hr_\gammab : \Tr\A \longto \Hr^\gammab\A$ and $\vr_\gammab : \Tr\A \longto \Vr\A$. If one has such a decomposition on the argument and image space of a linear application, this provides a way to decompose the linear application in a block-wise manner:

\begin{nota}{Block-wise decompositions}{blck_dec}
	Let $\bundle{\A_1}{\AA_1}$ and $\bundle{\A_2}{\AA_2}$ be two affine bundles. Let $\left (a_1, a_2\right ) \in \A_1\times\A_2$ and ${\Lb : \Tr_{a_1}\A_1 \longto \Tr_{a_2}\A_2}$ be a linear application.\\
	
\hrulefill\\
	
	Let $\gammab_1$ be a connection on $\A_1$. The following notations are introduced:
	
	\byalgn[.]{
		\at{\Lb_\hr}_{\gammab_1} &= \Lb \cdot \hr_{\gammab_1}
	}{
		\at{\Lb_\vr}_{\gammab_1} &= \Lb \cdot \vr_{\gammab_1}
	}
	
	Additionally, if one has $\Ab : \Hr\strut^{\gammab_1}_{a_1}\A_1 \longto \Tr_{a_2}\A_2$ and $\Bb : \Vr_{a_1}\A_1 \longto \Tr_{a_2}\A_2$, a new operator $\equiv$ is defined as follows:
	\algn{
		\Lb &\equiv \at{\bmat{\Ab&\Bb}}_{\gammab_1} &\iff \Lb = \Ab\cdot\hr_{\gammab_1} + \Bb \cdot \vr_{\gammab_1}
	}
	
	This means that the notation can be summarized as:
	
	\algn{
			\Lb &\equiv \at{\bmat{\at{\Lb_\hr}_{\gammab_1}&\at{\Lb_\vr}_{\gammab_1}}}_{\gammab_1}
	}
	
\hrulefill\\

	Let $\gammab_2$ be a connection on $\A_2$. The following notations are introduced:
	
	\byalgn[.]{
		\at{\Lb^\hr}_{\gammab_2} &= \hr_{\gammab_2} \cdot \Lb
	}{
		\at{\Lb^\vr}_{\gammab_2} &= \vr_{\gammab_2} \cdot \Lb
	}
	
	Additionally, if one has $\Ab : \Tr_{a_1}\A_1 \longto \Tr_{a_2}\A_2$ and $\Bb : \Tr_{a_1}\A_1 \longto \Tr_{a_2}\A_2$, a new operator $\equiv$ is defined as follows:
	\algn{
		\Lb &\equiv \at{\bmat{\Ab\\\Bb}}_{\gammab_2} &\iff \Lb = \hr_{\gammab_2}\cdot\Ab + \vr_{\gammab_2}\cdot\Bb
	}
	
	This means that the notation can be summarized as:
	
	\algn{
			\Lb &\equiv \at{\bmat{\at{\Lb^\hr}_{\gammab_2}\\\at{\Lb^\vr}_{\gammab_2}}}_{\gammab_2}
	}

\hrulefill

	Given a connection $\gammab_1$ on $\A_1$ and $\gammab_2$ on $\A_2$, the two previous notations are combined in a similar fashion. This leads to the following "summary":
	\algn{
			\Lb &\equiv \at{\bmat{\at{\Lb^\hr_\hr}_{\gammab_1,\gammab_2} & \at{\Lb^\vr_\hr}_{\gammab_1,\gammab_2}\\
								  \at{\Lb^\hr_\vr}_{\gammab_1,\gammab_2} & \at{\Lb^\vr_\vr}_{\gammab_1,\gammab_2}}}_{\gammab_1,\gammab_2}
	}
	where $\at{\Lb_\hr^\vr}_{\gammab_1,\gammab_2} = \vr_{\gammab_2}\cdot\Lb\cdot\hr_{\gammab_1}$, etc. The notations are also extended to the case where $\Lb$ is valued in $\Tr^\star\A_2$ by setting $\at{\Lb_\hr^\vr}_{\gammab_1,\gammab_2} = \vr^\star_{\gammab_2}\cdot\Lb\cdot\hr_{\gammab_1}$, etc. Similarly, the case where $\Lb$ takes its arguments in $\Tr^\star\A_1$ is also included.\\
	
	Furthermore it is chosen, by convention, that \underline{when no connection is specified, the reference connection is used}. Meaning that one has:
	\algn{
			\Lb &\equiv \bmat{\Lb_\hr^\hr&\Lb_\vr^\hr\\\Lb_\hr^\vr&\Lb_\vr^\vr}
	}
	where $\Lb_\hr^\vr = \vr_{\RefCon{\A_2}}\cdot\Lb\cdot\hr_{\RefCon{\A_1}}$, etc.
\end{nota}

By construction one then has the decomposition
\algn{
	\Lb &= \Lb_\hr^\hr + \Lb_\vr^\hr + \Lb_\hr^\vr + \Lb_\vr^\vr\\
		&= \Lb_\hr + \Lb_\vr\\
		&= \Lb^\hr + \Lb^\vr
}

Where exponents imply projections of the images and indexes projections of the arguments. The usefulness of the matrix notation resides in the fact that, because of the orthogonality $\hr_\gammab\cdot\vr_\gammab = \vr_\gammab\cdot\hr_\gammab = \bold{0}$, sum and compositions of linear applications become sum and product of matrices.\\

Beware that, even in the case where $\Lb$ and all its blocks are invertible, $\left [\Lb_\hr^\hr\right ]^{-1}$ may differ from $\left [\Lb^{-1}\right ]_\hr^\hr$ on $\Imr\left (\Lb_\hr^\hr\right )$ \partxt{on which $\left [\Lb_\hr^\hr\right ]^{-1}$ is defined}. The same subtlety arises with the other blocks. This is analogous to the case of real matrices.

\subsection{Metric and pseudo-metric}
\label{sect:pseudo_metr}

One goal of this paper is to describe the geometry of a material under deformation. Consequently, being able to measure lengths and angles is crucial. As is the case in standard mechanics, the existence of a metric $\gb$ on the macroscopic space $\AA$ is therefore postulated. That is, a linear, symmetrical and positive-definite application\footnote{
	$\Tr^\star\D\longto\D$ is the cotangent of $\D$, \ie{} the algebraic dual space of $\Tr\D$:
\quant{\forall x \in \D,}{\Tr^\star_x\D &=& \setof{\Lb : \Tr_x\D \longto \RR}{\Lb~\text{is linear and continuous}}}%
} $\ifun{\Tr_\ov{a}\AA}{\Tr_\ov{a}^\star\AA}{\ub}{{\bra\ub}_\gb}$ at every $\ov{a} \in \AA$. This induces a scalar product ${\braket\ub\wb}_\gb := {\bra\ub}_\gb\cdot \wb$ and a norm $\left \|\ub\right \|_\gb=\sqrt{{\braket\ub\ub}_\gb}$ on every tangent space $\Tr_\overline{a}\AA$.\\

The aim of this section is to define an analogous linear application on the total tangent space $\bundle{\Tr\A}{\A}$ of the total space $\bundle\A\AA$. Algebraically, it is required to be symmetric and non-negative but, for further convenience, not necessarily non-degenerate. Such an object is called a pseudo-metric:\\

\begin{defi}{Pseudo-metric and semi-norm}{pseudo_met}

Let $\A$ be a smooth manifold and $\bundle{\Tr\A}\A$ its tangent bundle.\\
\rule{15em}{.1px}\\
A pseudo-metric $\gf$ on $\A$ is a smooth application
\byalgn[.\text{\scriptsize or, equivalently}]{
	\fun{\Tr_a\A}{\Tr^\star_a\A}{\ub}{{\bra\ub}_\gf}~\text{at every}~a\in\A
}{
	\fun{\Tr\A\times_\A\Tr\A}{\RR}{\left (\ub, \wb\right )}{{\braket\ub\wb}_\gf := {\bra\ub}_\gf\cdot \wb}
}
such that:
\begin{enumerate}
	\it $\gf$ is \textbf{bi-linear}: $\forall a \in \A,\hfill\Tr_a\A\times\Tr_a\A\ni\left (\ub, \wb\right )\longmapsto{\braket\ub\wb}_\gf~\text{is bi-linear}~\hspace{7.1em}~$
	\it $\gf$ is \textbf{symmetric}: $\forall a \in \A,~\forall \left (\ub, \wb\right ) \in \Tr_a\A\times\Tr_a\A,\hfill{\braket\ub\wb}_\gf = {\braket\wb\ub}_\gf\hspace{7.7em}~$
	\it $\gf$ is \textbf{positive semi-definite}: $\forall \ub \in \Tr\A,\hfill{\braket\ub\ub}_\gf \ge 0\hspace{10.85em}~$
\end{enumerate}
\rule{15em}{.1px}\\

A semi-norm $\left \|\cdot\right \|$ on $\A$ is a smooth application
\[
	\left \|\cdot\right \| : \Tr\A \longto \RR_+
\]
such that:
\begin{enumerate}
	\it $\left \|\cdot\right \|$ is absolutely homogeneous: \hspace{1em} $\forall \ub \in \Tr\A, \forall \lambda \in \RR_+,\hfill \left \|\lambda\cdot\ub\right \| = |\lambda|\cdot\left \|\ub\right \|\hfill\hspace{3em}~$
	\it $\left \|\cdot\right \|$ is sub-additive: \hspace{1em} $\forall a \in \A,~\forall \left (\ub, \wb\right ) \in \Tr_a\A\times\Tr_a\A,\hfill \left \|\ub+\wb\right \| \le \left \|\ub\right \| + \left \|\wb\right \|\hspace{5.5em}~$
\end{enumerate}
\end{defi}

The non-negativity alone is sufficient to make Cauchy-Schwartz's lemma proof still work, leading to the following lemma:\\

\begin{lemm}
	If $\gf$ is a pseudo-metric on a manifold $\A$ then $\left \|\cdot\right \|_\gf: \fun{\Tr\A}{\RR_+}{\ub}{\sqrt{{\braket\ub\ub}_\gf}}$ is a semi-norm.
\end{lemm}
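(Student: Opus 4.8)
The plan is to verify the two defining axioms of a semi-norm directly, the only genuine work being a Cauchy--Schwarz inequality that must be established without appealing to non-degeneracy. First I would observe that $\left\|\cdot\right\|_\gf$ is well-defined as a map into $\RR_+$: positive semi-definiteness gives ${\braket\ub\ub}_\gf \ge 0$ for every $\ub \in \Tr\A$, so its square root is a genuine non-negative real. Absolute homogeneity is then immediate from bilinearity, since ${\braket{\lambda\ub}{\lambda\ub}}_\gf = \lambda^2\,{\braket\ub\ub}_\gf$ yields
\[
	\left\|\lambda\ub\right\|_\gf = \sqrt{\lambda^2}\,\left\|\ub\right\|_\gf = |\lambda|\,\left\|\ub\right\|_\gf
\]
for all $\lambda \in \RR$ and $\ub \in \Tr_a\A$.

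The core step is the Cauchy--Schwarz inequality $\left|{\braket\ub\wb}_\gf\right| \le \left\|\ub\right\|_\gf\,\left\|\wb\right\|_\gf$ for $\ub,\wb \in \Tr_a\A$. I would run the classical argument: for all $t \in \RR$, non-negativity together with bilinearity and symmetry gives
\[
	0 \le {\braket{\ub + t\wb}{\ub + t\wb}}_\gf = {\braket\ub\ub}_\gf + 2t\,{\braket\ub\wb}_\gf + t^2\,{\braket\wb\wb}_\gf.
\]
If ${\braket\wb\wb}_\gf > 0$ the right-hand side is a quadratic in $t$ taking only non-negative values, so its discriminant is non-positive, which is exactly Cauchy--Schwarz. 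The subtle case — and the only place where one might fear that dropping non-degeneracy breaks the argument — is ${\braket\wb\wb}_\gf = 0$. Then the right-hand side is affine in $t$ and non-negative for every $t \in \RR$, which forces the coefficient ${\braket\ub\wb}_\gf$ to vanish; hence both sides of the inequality are zero and it holds trivially. This is precisely the remark preceding the statement: positive semi-definiteness alone suffices, and the degenerate branch is where that matters.

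Sub-additivity then follows by expanding and bounding the mixed term with Cauchy--Schwarz:
\[
	\left\|\ub + \wb\right\|_\gf^2 = {\braket\ub\ub}_\gf + 2\,{\braket\ub\wb}_\gf + {\braket\wb\wb}_\gf \le \left(\left\|\ub\right\|_\gf + \left\|\wb\right\|_\gf\right)^2,
\]
and taking square roots of these non-negative quantities gives the triangle inequality. The required regularity is inherited from $\gf$: the map is the composition of the smooth assignment $\ub \mapsto {\braket\ub\ub}_\gf$ with the square root, hence continuous everywhere and smooth away from its zero locus (the usual caveat at points where the semi-norm vanishes, already present for ordinary norms). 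The main obstacle is thus entirely concentrated in the degenerate branch of Cauchy--Schwarz; once that single case is dispatched, both semi-norm axioms are routine.
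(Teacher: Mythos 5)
Your proof is correct and follows exactly the route the paper indicates: the paper offers no written proof beyond the remark that positive semi-definiteness alone suffices to make the classical Cauchy--Schwarz argument work, and your careful treatment of the degenerate branch ${\braket\wb\wb}_\gf = 0$ is precisely the content of that remark. The homogeneity and sub-additivity steps, as well as the caveat about smoothness failing on the zero locus, are all consistent with the paper's definitions.
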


Having a metric $\gb$ on $\AA$ and a pseudo-metric $\gf$ on $\A$, the next step is to investigate the different relations between the two. As stated in \cref{cartan_vision} and after, the affine bundle $\A$ is seen as $\AA$ endowed with a microscopic space at each point. This vision implies that the space is endowed with a strong physical interpretation, in particular regarding the physical size of its elements. Leveraging this interpretation, the desired relations shall be obtained.\\

First, let $a \in \A$ be a micro-structured point and $\overline{\wb} \in \Tr_\overline{a}\AA$ be a vector. Assuming its magnitude $\left \|\ov\wb\right \|_\gb$ is large enough, one would consider it to physically represent a macroscopic vector. The connection defined in \cref{sect:connection} then implements that vision by allowing us to identify this vector with its horizontal lift $\gammab_a\cdot\overline{\wb}\in\Hr^\gammab_a\A$. However, as a direct consequence of the choice of implementing $\AA$ as a continuum, while the magnitude of a vector can be arbitrarily large, it can also be arbitrarily small. Let therefore $\overline{\ub}\in\Tr_\overline{a}\AA$ be another vector. Assuming $\left \|\ov\ub\right \|_\gb$ is small enough, one would consider $\ov\ub$ to physically represent a microscopic vector. The solder form defined in \cref{sect:solder} then implements this other vision by allowing us to identify it with its microscopic lift $\vartheta_a\cdot\overline{\ub} \in \Vr_a\A$. Notice that the later can canonically be seen as a vector of $\Tr_a\A_\ov a$ \partxt{see \cref{rem:iso_vert_Tfib}}.\\

Since those identifications relate objects with the same physical interpretation, one requires them to preserve lengths and angles. This means in particular that, focusing on length preservation, ${\left \|\gammab_a\cdot\overline{\wb}\right \|_\gf = \left \|\overline{\wb}\right \|_\gb}$ and ${\left \|\varthetab_a\cdot\overline{\ub}\right \|_\gf = \left \|\overline{\ub}\right \|_\gb}$. Furthermore, one can go a step further by combining the two identifications. The vector $\overline{\ub} +\overline{\wb}\in \Tr_\overline{a}\AA$ $-$ considered to have a microscopic part $\overline{\ub}$ and a macroscopic part $\overline{\wb}$ $-$ can then be identified with the vector $\varthetab_a\cdot\overline{\ub}+\gammab_a\cdot\overline{\wb} \in \Tr_a\A$. Using the same argument, one requires this identification to preserve lengths and angles. This means that, focusing again on length preservation, ${\left \|\varthetab_a\cdot\overline{\ub}+\gammab_a\cdot\overline{\wb}\right \|_\gf = \left \|\overline{\ub}+\overline{\wb}\right \|_\gb}$.\\

Although the norm and semi-norm were used for more readability, this reasoning implies similar equalities for the metric ${\braket\cdot\cdot}_\gb$ and the pseudo-metric ${\braket\cdot\cdot}_\gf$, of the form ${\braket{\overline{\ub} + \overline{\wb}}{{\overline{\ub}}' + {\overline{\wb}}'}}_\gb = 
		{\braket{\varthetab_a\cdot\overline{\ub} + \gammab_a\cdot\overline{\wb}}{\varthetab_a\cdot{\overline{\ub}}' + \gammab_a\cdot{\overline{\wb}}'}}_\gf$. However, since $\gb$ and $\gf$ are bi-linear, this equality holds for some size of the inputs if and only if it holds for any size. Assumptions on the magnitude of the vector are therefore unnecessary. Notice that, if one allows vectors to be zero then one can obtain the previous relations from the later. These observations lead to the following compatibility condition:\\

\begin{defi}{Compatibility condition}{compab}
	Let $\bundle\A\AA$ be a vector bundle. A pseudo-metric $\gf$ on $\A$ is said to be \underline{compatible} with a metric $\gb$ on $\AA$, a connection $\gammab$ on $\A$ and a solder form $\varthetab$ on $\A$ if and only if the following property holds:\\
	~\\
	$\forall a \in \A, \forall \left (\overline{\ub}, \overline{\wb}, \overline{\ub}', \overline{\wb}'\right ) \in \left (\Tr_\overline{a}\AA\right )^4,$
	\algn{
		{\braket{\varthetab_a\cdot\overline{\ub} + \gammab_a\cdot\overline{\wb}}{\varthetab_a\cdot{\overline{\ub}}' + \gammab_a\cdot{\overline{\wb}}'}}_\gf
		&= {\braket{\overline{\ub} + \overline{\wb}}{{\overline{\ub}}' + {\overline{\wb}}'}}_\gb
	}
\end{defi}

In the special case where $\dim\left (\F_\A\right ) = \dim\left (\AA\right )$, $\varthetab$ is bijective just like $\gammab$. This means that, in this case, the set of values taken by $\varthetab_a\cdot\overline{\ub} + \gammab_a\cdot\overline{\wb}$ covers the whole space $\Tr_a\A$. As a direct consequence, one has the following uniqueness theorem:\\

\begin{theo}{Conditioned uniqueness of a compatible pseudo-metric}{unis_gf}
	Let $\bundle\A\AA$ be an affine bundle and assume a metric $\gb$ on $\AA$, a connection $\gammab$ on $\A$ and a solder form $\varthetab$ on $\A$ are provided.\\
	
	Then if $\dim\left (\F_\A\right ) = \dim\left (\AA\right )$ there exist a unique pseudo-metric $\gf$ on $\A$ compatible with $\gb$, $\gammab$ and $\varthetab$ given by:
	\[
		\gf : \fun{\Tr\A\times_\A\Tr\A}{\RR}{\left (\ub, \wb\right )}{{\braket{\interproj(\ub)}{\interproj(\wb)}}_\gb}
	\]
	where\footnote{Beware that $\vr_\gammab = \Id - \gammab\cdot\Tr\pi_\E$ is the vertical part of $\gammab$. That is, a linear operator not a vector.} $\interproj = \Tr\pi_\A + \varthetab^{-1}\cdot\vr_\gammab : \Tr\A \longto \Tr\AA$ is called the \underline{projection of interpretation}. In other words, the only compatible pseudo-metric is the pull-back\footnote{Beware that the pull-back $\interproj^*\gb$ (with an asterisk) corresponds to the composition $\interproj^\star\cdot\gb\cdot\interproj : \Tr\A \longto \Tr^\star\A$ where $\interproj^\star : \Tr^\star\AA\longto \Tr^\star\A$ (with a star) is the transpose of $\interproj$. In particular $\interproj^*\gb \neq \interproj^\star\cdot\gb$} $\gf = \interproj^*\gb$ of the metric $\gb$ by the projection of interpretation $\interproj$.\\
	
	Furthermore, when $\dim\left (\F_\A\right ) \neq \dim\left (\AA\right )$, a pseudo-metric is compatible if and only if its restriction on $\opp{Dom}{\interproj} := \opp{Im}{\varthetab} \oplus \Hr^\gammab$ is compatible. That is, if and only if,
	\algn{
		\at{\gf}_{\opp{Dom}{\interproj}} &: \opp{Dom}{\interproj} \longto \opp{Dom}{\interproj}^\star\\
										 &= \interproj^*\gb
	}
\end{theo}

\begin{proof}
	First, for a given $a \in \A$, one has $\Vr_a\A \simeq \Tr_a\A_\overline{a}$. Hence, $\dim\left (\F_\A\right ) = \dim\left (\AA\right )$ implies:
\algn{	
	\dim\left (\Vr_a\A\right ) &= \dim\left (\Tr_a\A_\overline{a}\right )\\
							   &= \dim\left (\A_\overline{a}\right )\\
							   &= \dim\left (\F_\A\right )\\
							   &= \dim\left (\AA\right )\\
							   &= \dim\left (\Tr_\overline{a}\AA\right )
}
Therefore, $\varthetab_a : \Tr_\overline{a}\AA \longto \Vr_a\A$ is a linear injective map between two finite dimensional vector spaces with the same dimension and is hence bijective. Since $\Imr\left (\varthetab\right ) = \Vr\A = \Imr\left (\vr_\gammab\right )$, $\interproj[a]$ is well defined.\\

Secondly, one has
\[
	\Imr\left (\vartheta_a\right ) \oplus \Imr\left (\gammab_a\right ) = \Vr_a\A \oplus \Hr_a^\gammab\A = \Tr_a\A
\]
So, if $\gf$ is a compatible pseudo-metric and $\left (\ub, \ub'\right ) \in \Tr_a\A\times\Tr_a\A$ then there exists a (unique) quadruplet ${\left (\overline{\vb}, \overline{\wb}, \overline{\vb}', \overline{\wb}'\right ) \in \left (\Tr_\overline{a}\AA\right )^4}$ such that ${\ub = \vartheta_a\cdot\vb + \gammab_a\cdot\wb}$ and ${\ub' = \vartheta_a\cdot\vb' + \gammab_a\cdot\wb'}$. The compatibility condition then implies ${\braket\ub{\ub'}}_\gf = {\braket{\vb+\wb}{\vb'+\wb'}}_\gb$. This proves the uniqueness of $\gf$.\\

Finally, $\Tr_a\pi_\A\cdot\gammab_a = \Id = \vartheta_a^{-1}\cdot\vr_\gammab\cdot\varthetab_a$ and $\Tr_a\pi_\A\cdot\varthetab_a = \bold{0} =\vr_\gammab\cdot\gammab_a$. Consequently, $\interproj[a]\cdot\gammab_a=\interproj[a]\cdot\varthetab_a=\Id$. Taking $\gf$ as stated by the theorem \partxt{\ie~the pull-back of $\gb$ by $\interproj$} and taking ${\left (\overline{\vb}, \overline{\wb}, \overline{\vb}', \overline{\wb}'\right ) \in \left (\Tr_\overline{a}\AA\right )^4}$ one then has:
\algn{
		{\braket{\varthetab_a\cdot\overline{\vb} + \gammab_a\cdot\overline{\wb}}{\varthetab_a\cdot{\overline{\vb}}' + \gammab_a\cdot{\overline{\wb}}'}}_\gf &= {\braket{\interproj\cdot\varthetab_a\cdot\overline{\vb} + \interproj\cdot\gammab_a\cdot\overline{\wb}}{\interproj\cdot\varthetab_a\cdot{\overline{\vb}}' + \interproj\cdot\gammab_a\cdot{\overline{\wb}}'}}_\gb\\
			&= {\braket{\overline{\vb} + \overline{\wb}}{\overline{\vb}' +\overline{\wb}'}}_\gb\\
}
proving that $\gf$ is compatible. Since the set of elements of the form $\varthetab_a\cdot\overline{\vb} + \gammab_a\cdot\overline{\wb}$ is exactly $\opp{Dom}{\interproj}$, this last series of equalities proves the last statement.
\end{proof}

\begin{rema}
	The projection of interpretation $\interproj$ comes from the interpretation of the microscopic spaces as infinitesimal neighbourhoods $\E_\ov x \simeq \left \{\ov x + \delta\ov x\right \}$ \parencite[4]{eringen1998microcontinuum}, \parencite[10]{grammenoudis2009micromorphic}. Applying this interpretation on vectors, while only keeping the macroscopic part of the point, one gets the stated projection. Indeed, upon fixing $\ov \ub \in \Tr\EE$ and choosing some trivialising coordinates, the projection locally becomes $\at{\interproj}_\ov \ub : \fun{\Tr\E_\ov x}{\Tr\EE}{\delta\ov\ub}{\ov \ub + \delta\ov\ub}$ which corresponds to the change of frame between a frame centred at $\ov \ub$ and one centred at $0$.
\end{rema}

Going back to the algebraic assumptions, one may wonder if they may require the pseudo-metric to be a metric, at least in some cases. This leads to the notion of kernel, defined as follows:

\begin{defi}{Kernel of a pseudo-metric}{kern} 
	If $\gf$ is a pseudo-metric on a manifold $\A$ then its kernel at $a \in \A$ is the following sub-vector-space of $\Tr_a\A$:
	\[
		\ker_a\gf = \setof{\ub \in \Tr_a\A}{\forall \wb \in \Tr_a\A, {\braket\ub\wb}_\gf = 0} = \setof{\ub \in \Tr_a\A}{\left \|\ub\right \|_\gf=0}
	\]
\end{defi}

The projection of $\Tr\A$ turns it into a projection structure $\bundle[\pi_{\Tr\A}]{\ker\gf}\A$ but, in general, it fails to be a vector bundle as, among other things, the scalar field $\dim\left (\ker_a\gf\right )$ on $\A$ may not be uniform. A pseudo-metric is then a metric if and only if $\ker_a\gf = \{\bold{0}_a\}$ for all $a \in \A$. In the special case where $\gf$ is compatible one sees that $\left \|\gammab\cdot\ub-\varthetab\cdot\ub\right \|_\gf = \left \|\ub-\ub\right \|_\gb = 0$ meaning that the kernel is non-trivial. More precisely, one has the following:\\

\begin{lemm}
	\label{lemm:kergf}
	Let $\bundle\A\AA$ be a vector bundle and $\gf$ a pseudo-metric on $\A$ compatible with a metric $\gb$ on $\AA$, a connection $\gammab$ on $\A$ and a solder form $\varthetab$ on $\A$. Then one has the following inclusion:
		\quant{\forall a \in \A}{\Hr^{\gammab-\varthetab}\A \subset \ker_a\gf}
leading to the following inequalities:
		\quant{\forall a \in \A}{\dim\left (\AA\right ) \leq \dim\left (\ker_a\gf\right ) \leq \dim\left (\F_A\right )}
In particular, when $\dim\left (\AA\right ) = \dim\left (\F_A\right )$ one has
		\[
			\ker\gf = \Hr^{\gammab-\varthetab}\A
		\]
\end{lemm}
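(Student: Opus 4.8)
The plan is to establish the fibrewise inclusion first, then read off the two dimension bounds from it, and finally pinch them together in the equidimensional case. For the inclusion, the crucial point is that although the compatibility condition of \cref{def:compab} only pins $\gf$ down on the image of $\varthetab_a$ together with $\Hr^\gammab_a\A$, one need not test $\gf$-orthogonality against every vector of $\Tr_a\A$: it suffices to use the characterisation $\ker_a\gf = \setof{\ub}{\left\|\ub\right\|_\gf = 0}$ recorded in \cref{def:kern}. First I would note that $\gammab - \varthetab$ is again a connection (the no-slip connection): it is smooth, linear and affine in the vertical coordinate, and it is a section since $\Tr\pi_\A\cdot(\gammab_a - \varthetab_a) = \Id - \bold0 = \Id$; in particular $\gammab_a - \varthetab_a$ is injective and the notation $\Hr^{\gammab-\varthetab}_a\A = \Imr(\gammab_a - \varthetab_a)$ is legitimate. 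Then, for an arbitrary $\ov c \in \Tr_\ov a\AA$, I would feed $\ov\ub = \ov\ub' = -\ov c$ and $\ov\wb = \ov\wb' = \ov c$ into the compatibility identity: the left-hand side collapses to $\|(\gammab_a - \varthetab_a)\cdot\ov c\|_\gf^2$, while the right-hand side is $\|{-\ov c} + \ov c\|_\gb^2 = 0$. Hence $(\gammab_a - \varthetab_a)\cdot\ov c$ has vanishing semi-norm and therefore lies in $\ker_a\gf$; as $\ov c$ is arbitrary this yields $\Hr^{\gammab-\varthetab}_a\A \subset \ker_a\gf$.

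The lower bound $\dim(\AA) \le \dim(\ker_a\gf)$ is then immediate: injectivity of $\gammab_a - \varthetab_a$ gives $\dim \Hr^{\gammab-\varthetab}_a\A = \dim \Tr_\ov a\AA = \dim(\AA)$, and this subspace sits inside $\ker_a\gf$.

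For the upper bound $\dim(\ker_a\gf) \le \dim(\F_\A)$, I would show that $\gf$ stays positive-definite on $\Hr^\gammab_a\A$. Taking $\ov\ub = \ov\ub' = \bold0$ in the compatibility identity yields ${\braket{\gammab_a\cdot\ov\wb}{\gammab_a\cdot\ov\wb'}}_\gf = {\braket{\ov\wb}{\ov\wb'}}_\gb$; since $\gb$ is a genuine (positive-definite) metric, a horizontal vector $\gammab_a\cdot\ov\wb$ of zero $\gf$-semi-norm forces $\ov\wb = \bold0$, i.e. the vector itself is zero. Thus $\Hr^\gammab_a\A \cap \ker_a\gf = \{\bold0\}$, and counting dimensions inside $\Tr_a\A$ gives $\dim(\ker_a\gf) \le \dim \Tr_a\A - \dim \Hr^\gammab_a\A = (\dim(\AA) + \dim(\F_\A)) - \dim(\AA) = \dim(\F_\A)$.

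Finally, when $\dim(\AA) = \dim(\F_\A)$ the two bounds coincide and force $\dim(\ker_a\gf) = \dim(\AA) = \dim \Hr^{\gammab-\varthetab}_a\A$; combined with the inclusion of \emph{equal} finite-dimensional spaces, this upgrades to the equality $\ker\gf = \Hr^{\gammab-\varthetab}\A$. I expect the only real subtlety to be the very first step: because \cref{th:unis_gf} determines the compatible $\gf$ only on $\opp{Dom}{\interproj}$, attempting to verify ${\braket{\ub}{\wb}}_\gf = 0$ directly for every $\wb \in \Tr_a\A$ would break down for $\wb$ outside that subspace. The clean way around this — and the heart of the argument — is to prove instead that the \emph{semi-norm} of $(\gammab_a - \varthetab_a)\cdot\ov c$ vanishes and then invoke Cauchy--Schwarz (equivalently, the kernel characterisation of \cref{def:kern}), which delivers orthogonality against all of $\Tr_a\A$ at once.
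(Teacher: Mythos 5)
Your proof is correct and follows essentially the same route as the paper's: the same substitution $\ov\ub=\ov\ub'=-\ov c$, $\ov\wb=\ov\wb'=\ov c$ into the compatibility condition to get $\Hr^{\gammab-\varthetab}_a\A\subset\ker_a\gf$ (relying, as the paper does, on the semi-norm characterisation of the kernel), the same injectivity argument for the lower bound, and the same pinching of the two bounds in the equidimensional case. The only cosmetic difference is the upper bound, where you argue via $\Hr^{\gammab}_a\A\cap\ker_a\gf=\{\bold 0\}$ and a dimension count, while the paper applies rank--nullity to the map $\gf_a:\Tr_a\A\longto\Tr_a^\star\A$ after noting that its restriction to the horizontal space has rank $\dim\left (\AA\right )$; both hinge on the identical fact that compatibility forces $\gf$ to be positive-definite on $\Hr^{\gammab}_a\A$.
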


\begin{proof}	
	First, one has:
	\[
		\Tr\pi_\A\cdot\left (\gammab-\varthetab\right ) = \Tr\pi_\A\cdot\gammab-\Tr\pi_\A\cdot\varthetab = \Id - \bold0
	\]
	meaning that $\gammab-\varthetab$ is an affine connection. Then, since $\gf$ is compatible, one has for any $\ub \in \Tr_\overline{a}\AA$:
	\algn{
		\braket{\gammab_a\cdot\ub-\varthetab_a\cdot\ub}{\gammab_a\cdot\ub-\varthetab_a\cdot\ub}_\gf &= \braket{\ub-\ub}{\ub-\ub}_\gb = 0
	}
	showing that $\Hr^{\gammab-\varthetab}\A \subset \ker_a\gf$. Since ${\gammab-\varthetab : \Tr_\overline{a}\AA\longto \Hr^{\gammab-\varthetab}}\A$ is injective, one then has:
	\algn{
			\dim\left (\AA\right ) &= \dim\left (\Tr_\overline{a}\AA\right )\\
								   &= \dim\left (\Hr^{\gammab-\varthetab}\A\right )\\
								   &\leq \dim\left (\ker_a\gf\right )
		}
	For the second inequality, one uses the following form for $\gf_a$ \partxt{with $a \in \A$}:
	\[
		\gf_a:\fun{\Tr_a\A}{\Tr^\star_a\A}{\ub}{{\bra\ub}_\gf = \ub \mapsto {\braket\ub\ub}_{\gf}}
	\]
	This application is a linear application between vector spaces whose kernel is $\ker\gf_a=\ker_a\gf$. Since $\gf$ is compatible, $\gf_a$ reduces on $\Hr_a^\gammab\A$ into $\at{\gf_a}_{\Hr_a^\gammab\A}: \ub \longmapsto \left [\wb \mapsto {\braket{\Tr_a\pi_\A\cdot\ub}{\Tr_a\pi_\A\cdot\wb}}_{\gb_\ov{a}}\right ]$. Since $\at{\Tr_a\pi_\A}_{\Hr_a^\gammab\A} : \Hr_a^\gammab\A \simeq \Tr_\ov{a}\AA$ is bijective \partxt{of inverse $\gammab_a$} one has:
	\[
		\rank\left (\gf_a\right ) \ge \rank\left (\at{\gf_a}_{\Hr_a^\gammab\A}\right ) = \rank\left (\gb_\ov{a}\right ) = \dim\left (\AA\right )
	\]
	This means that:
		\algn{
			\dim\left (\ker_a\gf\right ) &= \dim\left (\Tr^\star_a\A\right ) - \rank\left (\gf_a\right )\\
										 &= \dim\left (\A\right ) - \rank\left (\gf_a\right )\\
										 &= \dim\left (\F_\A\right ) + \dim\left (\AA\right ) - \rank\left (\gf_a\right )\\
										 &\leq \dim\left (\F_\A\right ) + \dim\left (\AA\right ) - \dim\left (\AA\right )\\
										 &\leq \dim\left (\F_\A\right )
		}
	If $\dim\left (\AA\right ) = \dim\left (\F_A\right )$ then $\dim\left (\ker_a\gf\right ) = \dim\left (\AA\right ) = \dim\left (\Hr_a^{\gammab-\varthetab}\A\right )$. One being a subspace of the other they have to be equal.
\end{proof}

\section{Material placement and induced geometry}
\label{sect:conf_pull_back}

In classical mechanics, one has a macroscopic material space $\BB$ placed into the macroscopic Euclidean ambient space $\EE$ using a macroscopic punctual placement\footnote{The somewhat standard term "placement" shall be used, avoiding the ambiguity of the term "configuration", used in the literature to refer to the placement map $\ov\varphi:\BB\longto\EE$ \cite[4]{eringen1998microcontinuum}\cite[25]{Marsden:1994}\cite[5]{rakotomanana2012geometric} but also its image space $\Imr\left (\ov\varphi\right ) \subset \EE$ \cite[50]{CHAD}\cite[112]{GON}\cite[28]{rakotomanana2012geometric}.} map $\ov\varphi:\BB\longto\EE$. Elements of $\BB$ are interpreted as macroscopic particles while elements of $\EE$ are seen as macroscopic positions. From the punctual placement map, one then induces a first order macroscopic placement map $\ov{\Fb} = \Tr\ov\varphi : \Tr\BB\longto\Tr\EE$. The later is used to pull the ambient geometry $-$ which only consist of the Euclidean metric $\gb$ $-$ back onto the material space via $\Gb = \Fb\strut^\star\cdot\gb\cdot\Fb$. Where $\strut^\star$ denotes transposition.\\

This process is crucial as, using this material geometry, one can measure deformations and postulate energies. The aim of this section is to extend this process to the generalised continua covered in this paper.

\subsection{Material space, ambient space and the standard interpretation}

Based on the discussion in \cref{cartan_vision}, the material and ambient spaces are modelled as affine bundles where the projections provide the macroscopic part of, respectively, a material particle or an ambient position. Those macroscopic parts live in the (macroscopic) spaces of classical mechanics. In fact, one can go even further and require that every object of the micro-structured model has a macroscopic part living in the classical model. This thinking leads to the following axiom, which will be called the \underline{standard interpretation}:\\

\begin{axiom}{Standard interpretation of micro-structured continuum mechanics}{std_int}
	Each kinematic space of the continuum mechanics of micro-structured media ($\upmu$CM) has a projection structure whose basis is a kinematic space of the continuum mechanics of classical media (cCM).\\
		
	Each kinematic object of the $\upmu$CM belongs to such a projection structure. Its projection onto the associated base is referred to as its \underline{macroscopic part}.\\
	
	A mathematical quantity is \underline{physically observable} if and only if it belongs to a kinematic space of the cCM.
\end{axiom}

The first direct consequence of this axiom is that the material and ambient spaces must both have a projection structure over the usual macroscopic body $\BB$ and macroscopic $3$-dimensional Euclidean space $\EE$. These spaces are denoted $\B$ and $\E$ respectively:

\byalgn[.]{
	\text{The material bundle:}~\bundle\B\BB
}{
	\text{The ambient bundle:}~\bundle\E\EE
}

\renewcommand{\tmp}{In a trivialisation $\E \overset{\mathrm{loc.}}\simeq \EE \times \F_\E$ one can interpret the Cartesian product as a $\mathrm{macro}\times\mathrm{micro}$ decomposition. Since trivialisations are in general local, one would not be able to compare distant objects. When well-defined, having the same microscopic part would not be trivialisation-dependent \partxt{although its value in $\F_\E$ would}.}

In accordance with \cref{cartan_vision}, these spaces are required to be affine bundles. An element $x \in \E$ \partxt{resp. $X \in \B$} therefore has a macroscopic part $\overline{x} = \pi_\E(x) \in \EE$ \partxt{resp. $\overline{X} = \pi_\B(X) \in \BB$}. The notion of microscopic part, on the other-hand, requires a trivialisation to be defined\footnote{\tmp} and would be a fundamentally local concept. Following interpretation of the microscopic parts of \fullciteauthors{cartan1922generalisation,eringen1964nonlinear,kroner1980continuum,mindlin1964micro}{eringen1998microcontinuum,amari1962theory,grammenoudis2009micromorphic}, the dimension of the microscopic spaces $-$ that is, the rank of $\B$ and $\E$ $-$ are required to be $3$:

\algn{
	&\rank\left (\B\right )& &:=& &\dim\left (\F_\B\right )& &=& &3&\\
	&\rank\left (\E\right )& &:=& &\dim\left (\F_\E\right )& &=& &3&	
}
This corresponds to setting $k=3$ in \cref{cartan_vision}.

\subsection{Ambient geometry}

In order to be able to pull-back the ambient geometry on the material, the geometry of $\E$ needs to be prescribed first. Since $\E$ is the set of positions, it lacks any defaults caused by fibres miss-alignment or shape irregularity (since those are in the material $\B$). Heuristically, this means that all microscopic fibres will be of the same size and shape \partxt{$\gf$ is uniform}, perfectly aligned with the macroscopic space \partxt{$\varthetab$ is "trivial"} and perfectly aligned with each others \partxt{$\gammab$ is "trivial"}. In order to implement this vision, the ambient geometry is implemented as the following generalized Euclidean geometry:

\renewcommand{\tmp}{%
The term holonomic comes from the grec terms ὅλος (holos) meaning "whole" and νόμος (nomos) meaning "law". Several uses of this word exist in the literature. In this paper, holonomic will be used in the sense of to characterise an integrable system \cite[8]{peshkov2019continuum}, that is, a system expressible as a gradient.%
}

\begin{enumerate}
	\it The ambient space is $\E \simeq \Tr\EE$ with $\EE$ the $3\Dr$ Euclidean space.\\
	Let $\eb^{\ov\xb}=\left (\eb^{\ov\xb}_1, \eb^{\ov\xb}_2, \eb^{\ov\xb}_3\right )$ be the canonical coordinates on $\EE$. Then $\left (\eb^{\ov\xb}, \eb^\yb\right ) := \left (\eb^{\ov\xb}, \der{\eb^{\ov\xb}}\right )$ is a frame on $\EE$ \partxt{\ie{} a field of bases of $\Tr\EE$}. This means that $\E \simeq \Tr\EE \simeq \EE\times\EE$ is trivial with $\F_\E \simeq \EE$. Differentiating once more, one obtains the coordinates $\left (\left (\eb^{\ov\xb}, \eb^\yb\right ),\left (\der{\eb^{\ov\xb}},\der{\eb^\yb}\right )\right )$ of $\Tr\E$ or, equivalently, the trivialization $\Tr\E \simeq \Tr\EE\times\Tr\EE \simeq \EE^2\times\EE^2$. Since this last trivialization is Euclidean-valued, linear applications $\Tr_{x_1}\E \longto \Tr_{x_2}\E$ can be seen as linear applications $\EE^2\longto\EE^2$, that is, block-matrices.\\
	
	These trivializations are called \underline{holonomic coordinates}\footnote{\tmp}. In those coordinates, the macroscopic projections become:
		\[
			\pi_\E : \fun{\E\simeq\EE\times\EE}{\EE}{\bmat{\ov{x}\\y}}{\ov x} \hspace{5em} \Tr\pi_\E \equiv \bmat{\Id&\bold0}: \fun{\Tr\E\simeq\Tr\EE\times\Tr\EE}{\Tr\EE}{\bmat{\bmat{\ov x\\y}\\\bmat{\delta \ov x\\\delta y}}}{\bmat{\ov x\\\delta\ov x}}
		\]
	
	\it The connection $\gammab$ is the Levi-Civita connection which is linear and, in this trivialization, becomes:
\algn{
	\gammab \equiv \bmat{\Id\\\bold0} : \fun{\E\times_\EE\Tr\EE}{\Tr\E}{\bmat{\bmat{\ov x\\y}\\\bmat{\ov x\\\delta \ov x}}}{\bmat{\bmat{\ov x\\ y}\\\bmat{\delta \ov x\\\bold 0}}}
}
	
	\it The solder form $\varthetab$ is the canonical solder form which, in this trivialization, becomes:
\algn{
	\vartheta \equiv \bmat{\bold0\\\Id} : \fun{\E\times_\EE\Tr\EE}{\Vr\E \subset \Tr\E}{\bmat{\bmat{\ov x\\y}\\\bmat{\ov x\\\delta \ov x}}}{\bmat{\bmat{\ov x\\ y}\\\bmat{\bold 0\\\delta \ov x}}}
}
\end{enumerate}

In this geometry and this trivialization, the (compatible) pseudo-metric $\gf : \Tr\E \longmapsto \Tr^\star\E$ takes the following form:

\byalgn[\}]{
		\interproj &= \Tr\pi_\E + \varthetab^{-1}\cdot\vr_\gammab\\
			&= \Tr\pi_\E + \varthetab^{-1}\cdot\left (\Id - \gammab\cdot\Tr\pi_\E\right )\\
			&\equiv \bmat{\Id&\bold0} + \bmat{\bold0&\Id}\cdot\left (\bmat{\Id&\bold0\\\bold0&\Id}-\bmat{\Id\\\bold0}\cdot\bmat{\Id&\bold0}\right )\\
			&\equiv \bmat{\Id&\Id}\\
			\\
		\gf &= \left (\Tr\pi_\E + \varthetab^{-1}\cdot\vr_\gammab\right )^\star\cdot\gb\cdot\left (\Tr\pi_\E + \varthetab^{-1}\cdot\vr_\gammab\right )\\
			&\equiv \bmat{\Id\\\Id}\cdot\bmat{\Id}\cdot\bmat{\Id&\Id}\\
			&\equiv \bmat{\Id&\Id\\\Id&\Id}\\
}{
		\gf &: \fun{\Tr\E}{\Tr^\star\E}{\bmat{\bmat{\ov x\\y}\\\bmat{\delta \ov x\\\delta y}}}{\bmat{\bmat{\ov x\\y}\\\bmat{\delta \ov x+\delta y\\\delta \ov x+\delta y}}}
}

Even if $\E$ and $\Tr\EE$ are then technically isomorphic, their elements will be interpreted differently (as affine points and linear vectors respectively). This isomorphism shall therefore be forgotten for the remainder of this article. One could think that the solder form $\vartheta$ defined this way states that the macroscopic and microscopic worlds are of the same size since it does not seem to scale the vectors \partxt{see \cref{scale_discuss}}. However, this is not necessarily true: the coordinate system is simply chosen such that the scaling is $1$.\\

\subsubsection*{On the choice of reference connection}

In accordance with \cref{def:ref_con}, the ambient reference connection is set to be the Levi-Civita connection:
\algn{
	\RefCon\E &:= \gammab
}
and the material ambient reference connection is left free. Nevertheless, a more compact notation is introduced:
\algn{
	\Gammabref &:= \RefCon\B
}

\subsection{Physical acceptability \hyph punctual maps}
\label{sect:punctual_map}

Let $\varphi : \B \longto \E$ be a smooth map of the micro-structured system. Not all such maps are physically meaningful. This section aims at specifying some properties which make such a map physically acceptable.\\

Assume a local trivialisation on $\B$ is given and let $X\equiv\bmat{\ov X\\ Y}\in\B$ and $x = \varphi(X) \equiv \bmat{\ov x\\ y}$. Then, the last statement of \cref{ax:std_int} implies that there is a projection structure on the set of physically acceptable applications such that $\ov \varphi:\BB\longto\EE$ is the macroscopic part of $\varphi$. However, $\varphi$ maps $X$ $-$ whose macroscopic part is $\ov X$ $-$ to $x$ $-$ whose macroscopic part is $\ov x$. One therefore requires that the macroscopic part of the transformation by $\varphi$ is the transformation by $\ov \varphi$ of the macroscopic part. That is:
\[
	\ov x := \ov{\varphi(X)} = \ov{\varphi}\left (\ov X\right )
\]
or, equivalently, $\pi_\E\circ \varphi = \ov \varphi \circ \pi_\B$. This means that the macroscopic part of the image solely depends on the macroscopic part of its pre-image or, put differently, that $\varphi$ does not break apart microscopic fibres. Physically, this implies that if one were to zoom out and only see the macroscopic worlds $\BB$ and $\EE$, then $\varphi$ would still be expressible and not break apart macroscopic points (or "grains") in a indescribable way.\\

One therefore has $\varphi : \ifun{\B}{\E}{\bmat{\ov X\\Y}}{\bmat{\ov \varphi\left (\ov X\right )\\\varphi^\vr\left (\ov X, Y\right )} =: \bmat{\ov x\\y}}$. As mentioned in earlier sections, the coordinate $\ov X$ of $\BB$ is physically interpreted as a macroscopic quantity and the coordinate $Y$ of $\B_\ov X$ as a microscopic quantity. This paper will treat only materials in which the microstructure is several order of magnitudes smaller than the macrostructure, itself of order close to $1$. In other words, one has
\algn{
	|Y| &\ll \abs{\ov X} \simeq 1
}

A direct consequence is that $Y^2$ is several orders of magnitude smaller that $Y$. This means that, in $\varphi^\vr$'s Taylor expansion, higher orders will not have any physically significant contributions. The vertical coordinate $\varphi^\vr$ is therefore required to be affine. Since for two local trivialisations the change of vertical coordinates is affine in $Y$, this requirement is independent of the choice of coordinates.\\

All of these requirements can be summarised in the following mathematical concept, which formalises a notion of structure preservation:\\
\renewcommand{\tmp}{\D}
\newcommand{\tmpp}{\DD}

\begin{defi}{Morphisms}{morph}
	\renewcommand{\tmp}{\D}
	\renewcommand{\tmpp}{\DD}
	
	Let $\bundle\A\AA$ and $\bundle\tmp\tmpp$ be two projection structures. An application $f : \A \longto \tmp$ is called \underline{fibre-preserving} if and only if there exists an application $\overline{f} : \AA\longto\tmpp$ such that the following diagram is commutative:
\simplemorphism{\pi_\A}{\pi_\tmp}\A\AA\tmp\tmpp{f}{\overline{f}}
	which means that for any $(a_1, a_2) \in \A\times\A$:
		\[
			\pi_{\A}(a_1) = \pi_{\A}(a_2) \implies \pi_{\tmp}(f(a_1)) = \pi_{\tmp}(f(a_2)) =: \overline{f}\left (\pi_{\A}(a_2)\right )
		\]	
	The map $\overline{f}$ will be called the \underline{shadow} of $f$. One can also say that $f$ is over $\overline{f}$.\\
		
	For $\overline{a} \in \AA$, the following notation for the \underline{restriction} of $f$ over $\overline{a}$ is introduced:
		\[
			f_{\overline{a}} = \at{f}_{{\A}_{\overline{a}}} : {\A}_{\overline{a}} \longto {\tmp}_{\overline{f}\left (\overline{a}\right )}
		\]

\hrulefill\\

	Let $\bundle\A\AA$ and $\bundle\tmp\tmpp$ be two affine bundles. A smooth map $f : \A \longto \tmp$ is called an \underline{affine bundle morphism} if and only if $f$ is fibre-preserving, its shadow $\ov f$ is smooth and $f$ is \underline{affine}. Where the later means that $f_\overline{a}$ is affine on $\A_\overline{a}$ for all $\overline{a} \in \AA$.\\
	
	Respectively, if $\bundle\A\AA$ and $\bundle\tmp\tmpp$ are vector bundles, $f$ is called a \underline{vector bundle morphism} if and only if it is an affine bundle morphism and $f_\overline{a}$ is \underline{linear} on $\A_\overline{a}$ for all $\overline{a} \in \AA$.

\hrulefill\\

	The set of fibre-preserving applications from $\A$ to $\tmp$ is denoted $\fibpres{\A}{\tmp}$. It is equipped with a projection structure over the set $\tmpp^\AA$ of applications from $\AA$ to $\tmpp$, whose fibres are denoted $\fibpres[\overline{f}]{\A}{\tmp}$ \partxt{for $\overline{f}\in\tmpp^\AA$}:
	\[
		\pi_{\fibpres\A\tmp}:\fun{\fibpres\A\tmp}{\tmpp^\AA}{f}{\overline{f}}
	\]	
	The sets of affine (resp. vector) bundle morphisms from $\A$ to $\tmp$ are denoted $\aff\A\tmp\subset\fibpres{\A}{\tmp}$ \partxt{resp. $\l\A\tmp\subset\aff{\A}{\tmp}$}. They are equipped with the induced projection structures, whose fibres are denoted $\aff[\overline{f}]\A\tmp$ \partxt{resp. $\l[\overline{f}]\A\tmp$}, for $\overline{f}\in\smooth\AA\tmpp$:
	\byalgn{
		\pi_{\aff\A\tmp}:\fun{\aff\A\tmp}{\smooth\AA\tmpp}{f}{\overline{f}}
	}{
		\pi_{\l\A\tmp}:\fun{\l\A\tmp}{\smooth\AA\tmpp}{f}{\overline{f}}
	}
	
	In the case where the structure on $\A$ and/or $\tmp$ is ambiguous one can use the notations $\fibpres{\bundle\A\AA}{\bundle\tmp\tmpp}$, $\aff{\bundle\A\AA}{\bundle\tmp\tmpp}$ and $\l{\bundle\A\AA}{\bundle\tmp\tmpp}$.
\end{defi}

By construction, one then has that $\varphi:\B \longto \E$ is physically acceptable if and only if it is an affine bundle morphism $\varphi \in \aff\B\E$. Its macroscopic part is then its shadow $\ov \varphi \in \ov{\aff\B\E}=\smooth\BB\EE$.

\VOID{\begin{rema}
	In general, a bundle morphism $f : \A \longto \tmp$ is not required to be linear nor affine but to preserve the structure groups, meaning:
	\quant{\forall \ov a \in \A,~\forall \Ab \in \G_\A,~\exists \Db \in \G_\tmp\simeq\G_\A,}{f_\ov a \circ \Ab = \Db \circ {f_\ov a}}
	wish can be interpreted as stating that $f$ transforms the fibres as an element of the group. In this paper, and without any loss of generality, we shall only consider vector (resp. affine) bundles, whose structure group is $\GL\left (\F_\A\right )$ \partxt{resp. $\Aff\left (\F_\A\right )$}. Consequently, this notion and the previous definition are equivalent. In general, however, the structure group is fundamental as it prescribed the geometry of the material. For example, the structure group of Coserrat media is $\SO(3)$, the sphere \cite[8]{boyer2017poincare}, meaning the micro-structure can only be rotated.
\end{rema}}

\subsection{Physical acceptability \hyph first-order maps}
\label{sect:first_order_map}

Let $\Fb : \Tr\B \longto \Tr\E$ be a smooth map of the micro-structured system. Physically, the spaces $\Tr\B$ and $\Tr\E$ are interpreted as the set of first-order variations of particles and positions respectively. As for punctual maps, not all such maps are physically relevant. This section aims at specifying some properties which make such a map physically acceptable. In classical mechanics, physically acceptable maps are the first-order transformations associated to a punctual transformation, that is, gradient maps $\Tr\ov\varphi : \Tr\BB \longto \Tr\EE$. One therefore needs to generalise this property to microstructured media.\\

First of all, $\Fb$ must be over a punctual transformation. That is, if two vectors of $\Tr\B$ are over the same point of $\B$, then their images must be over a common point of $\E$. Secondly, such a mapping of vectors must be linear, which is well defined since the pre-images and images are over common points and can therefore be summed. These two properties are exactly equivalent to saying that $\Fb$ is a morphism $\Fb \in \l{\bundle{\Tr\B}\B}{\bundle{\Tr\E}\E}$ for the punctual projections.\\

$\Fb$ is therefore over a certain shadow $\varphi : \B \longto \E$. Since $\Fb$ needs to be physically acceptable, its shadow $\varphi$ has to be physically acceptable. That is, $\varphi \in \aff\B\E$. In trivialising affine coordinates, $\varphi$ and $\Fb$ take the forms
\byalgn{
	\varphi :& \fun{\B}{\E}{\bmat{\ov X\\Y}}{\bmat{\ov \varphi\left (\ov X\right )\\\varphi^\vr\left (\ov X\right )\cdot Y + {\overrightarrow{t}}^\vr\left (\ov X\right )}}
}{
	\Fb_{\bmat{\ov X\\Y}} \equiv& \bmat{\Fb_\hr^\hr\left (\ov X, Y\right ) & \Fb_\vr^\hr\left (\ov X, Y\right ) \\ %
										\Fb_\hr^\vr\left (\ov X, Y\right ) & \Fb_\vr^\vr\left (\ov X, Y\right )}
}

Here, \cref{ax:std_int} comes into play. Indeed, the standard interpretation stipulates that $\Fb$ must be part of a projection structure such that its macroscopic part is of the form $\ov{\Fb} : \Tr\BB \longto \Tr\EE$. Using the same reasoning as for punctual transformations, the macroscopic part of the transformation by $\Fb$ is required to be the transformation by $\ov\Fb$ of the macroscopic part. That is:
\quant{\forall \Ub \in \Tr\B,}{\ov \ub := \ov{\Fb\cdot\Ub} = \ov{\Fb}\cdot\ov \Ub}
or, equivalently, $\Tr\pi_\E\circ \Fb = \ov \Fb \circ \Tr\pi_\B$. This requirement is equivalent to saying that $\Fb$ is fibre-preserving for the macroscopic projections:
\[
	\Fb \in \fibpres{\bundle[\Tr\pi_\B]{\Tr\B}{\Tr\BB}}{\bundle[\Tr\pi_\E]{\Tr\E}{\Tr\EE}}
\]

This forces $\Fb_\vr^\hr$ to be zero, as stated in the following lemma:\\

\renewcommand{\tmp}{\D}
\renewcommand{\tmpp}{\DD}
\begin{lemm}
	\renewcommand{\tmp}{\D}
	\renewcommand{\tmpp}{\DD}
	\label{lemm:fvh_eq_zero}
	Let $\bundle\A\AA$ and $\bundle\tmp\tmpp$ be two affine bundles and $\Lb \in \l{\Tr\A}{\Tr\tmp}$. Then:
	\algn{
		\boxed{\Lb_\vr^\hr = 0} &\iff \Lb \in \fibpres{\bundle[\Tr\pi_{\A}]{\Tr{\A}}{\Tr{\AA}}}{\bundle[\Tr\pi_{\tmp}]{\Tr{\tmp}}{\Tr{\tmpp}}}
	}
	in which case its shadow as an element of $\fibpres{\bundle[\Tr\pi_{\A}]{\Tr{\A}}{\Tr{\AA}}}{\bundle[\Tr\pi_{\tmp}]{\Tr{\tmp}}{\Tr{\tmpp}}}$ is
	\algn{
		\ov{\Lb} &= \Tr\pi_\tmp \cdot \Lb \cdot \RefCon{\A}
	}
\end{lemm}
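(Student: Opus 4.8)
The plan is to recast both sides of the equivalence as statements about how $\Lb$ treats the vertical subbundle, and then compare them. Unwinding the block notation with the reference connections understood, one has $\Lb_\vr^\hr = \hr_{\RefCon\D}\cdot\Lb\cdot\vr_{\RefCon\A}$. Since $\Imr\left (\vr_{\RefCon\A}\right ) = \Vr\A = \ker\Tr\pi_\A$ and $\ker\left (\hr_{\RefCon\D}\right ) = \Vr\D = \ker\Tr\pi_\D$, the first thing I would do is observe that $\Lb_\vr^\hr = 0$ is equivalent to the intrinsic, connection-free condition
\[
	\forall a \in \A, \qquad \Lb\left (\Vr_a\A\right ) \subseteq \Vr_{\varphi(a)}\D,
\]
where $\varphi$ denotes the punctual shadow of $\Lb$. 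This reformulation is reassuring, since its right-hand side involves only $\Imr\vr$ and $\ker\hr$ and hence does not depend on which reference connections were selected, exactly as the claimed equivalence should not.

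For the implication [fibre-preserving $\Rightarrow \Lb_\vr^\hr = 0$] I would argue straight from \cref{def:morph}. Fix $a \in \A$ and $\Ub \in \Vr_a\A = \ker\Tr\pi_\A$; then $\Tr\pi_\A\left (\Ub\right ) = 0_{\ov a} = \Tr\pi_\A\left (0_a\right )$, so fibre-preservation for the macroscopic projections forces $\Tr\pi_\D\left (\Lb\cdot\Ub\right ) = \Tr\pi_\D\left (\Lb\cdot 0_a\right ) = 0$. Hence $\Lb\cdot\Ub \in \ker\Tr\pi_\D = \Vr_{\varphi(a)}\D$, which is precisely the reformulated condition. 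This direction is routine.

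For the converse I would use the splitting $\Tr_a\A = \Hr^{\RefCon\A}_a\A \oplus \Vr_a\A$ together with $\hr_{\RefCon\A} = \RefCon\A\cdot\Tr\pi_\A$ to write, for every $\Ub \in \Tr\A$,
\[
	\Tr\pi_\D\cdot\Lb\cdot\Ub = \left (\Tr\pi_\D\cdot\Lb\cdot\RefCon\A\right )\cdot\Tr\pi_\A\cdot\Ub + \Tr\pi_\D\cdot\Lb\cdot\vr_{\RefCon\A}\cdot\Ub.
\]
By the reformulated condition the last term vanishes, so $\Tr\pi_\D\cdot\Lb$ factors through $\Tr\pi_\A$ — which is exactly fibre-preservation — and reading off the cofactor yields the announced shadow $\ov\Lb = \Tr\pi_\D\cdot\Lb\cdot\RefCon\A$.

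The hard part, and the step I expect to be the genuine obstacle, is hidden in that last sentence: as written, $\Tr\pi_\D\cdot\Lb\cdot\RefCon\A$ is only a map on $\A\times_\AA\Tr\AA$, so a priori it depends on the chosen point $a$ inside a fibre $\A_{\ov a}$ and not merely on $\ov a = \pi_\A(a)$, whereas a genuine shadow must live on $\Tr\AA$. Equivalently, one must verify that the horizontal block $\Lb_\hr^\hr$ is constant along the micro-fibre. I would establish this well-definedness using the structural form of $\Lb$ in the intended application, where $\Lb = \Fb$ and $\Lb_\hr^\hr = \Tr\ov\varphi$ manifestly depends only on the macroscopic coordinate; the general mechanism for checking such fibre-constancy is a smoothness-and-connectedness argument on $\A_{\ov a}$ showing that the fibre-derivative of $a \mapsto \Tr\pi_\D\cdot\Lb\cdot\RefCon\A$ vanishes, and it is precisely here that the structural hypotheses on $\Lb$ must be invoked. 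Once $\ov\Lb$ is known to descend to $\Tr\AA$, the factorisation above closes the equivalence and simultaneously identifies the shadow.
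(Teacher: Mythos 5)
Your proof of the implication [fibre-preserving $\Rightarrow \Lb_\vr^\hr = 0$] is correct and, apart from phrasing, is the paper's argument (you apply fibre-preservation to the pair $\left (\Ub, 0_a\right )$; the paper runs the equivalent operator identity $\Tr\pi_\A = \Tr\pi_\A\cdot\hr_{\RefCon\A}$ through the shadow relation). Your converse is also the paper's construction: both exhibit the candidate shadow $\ov\Lb := \Tr\pi_\D\cdot\Lb\cdot\RefCon\A$ and check $\ov\Lb\cdot\Tr\pi_\A = \Tr\pi_\D\cdot\Lb$ pointwise. The one place you deviate is that you flag the descent of $\ov\Lb$ from $\A\times_\AA\Tr\AA$ to $\Tr\AA$ as an unresolved obstacle, where the paper's proof asserts the conclusion without comment.

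You are right that this descent is the crux, but your proposed patch cannot close it, because the descent does not follow from the hypotheses of the lemma: the implication [$\Lb_\vr^\hr = 0 \Rightarrow$ fibre-preserving] fails in general. The computation only shows that $\Tr\pi_\D\cdot\Lb\cdot\Ub$ depends on $\Ub$ through the pair $\left (\pi_{\Tr\A}\left (\Ub\right ), \Tr\pi_\A\left (\Ub\right )\right )$, whereas fibre-preservation requires dependence on $\Tr\pi_\A\left (\Ub\right )$ alone, i.e. constancy of the horizontal block along each fibre $\A_{\ov a}$; nothing in $\Lb \in \l{\Tr\A}{\Tr\D}$ together with $\Lb_\vr^\hr = 0$ imposes this. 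Concretely, take $\A = \D = \AA\times\RR$, punctual shadow $\ell = \Id$, and
\[
	\Lb_{\left (\ov x,\, y\right )} : \left (\delta\ov x,\, \delta y\right ) \longmapsto \left (\left (1+y\right )\cdot\delta\ov x,~ \delta y\right )
\]
This $\Lb$ is smooth, linear on each tangent space, affine in the vertical punctual coordinate, and maps $\Vr\A$ into $\Vr\D$, so $\Lb_\vr^\hr = 0$ for every choice of reference connections; yet the macroscopic part of the image, $\left (1+y\right )\cdot\delta\ov x$, depends on $y$, so no shadow $\Tr\AA \longto \Tr\DD$ exists. Your two suggested repairs fail accordingly: the vanishing-fibre-derivative argument has nothing to make the derivative vanish (above it equals $\delta\ov x \neq 0$), and invoking the intended application, where $\Lb_\hr^\hr$ is $\Tr\ov\varphi$, is circular, since in \cref{def:lpa} that form of the horizontal block is a consequence of the fibre-preservation assumption (condition 2), not something available prior to it. The clean statement adds the missing condition to the left-hand side of the equivalence — $\Lb_\vr^\hr = 0$ \emph{and} $\Tr\pi_\D\cdot\Lb\cdot\RefCon\A$ constant on each fibre $\A_{\ov a}$ — after which both your argument and the paper's close. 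Note that the paper's own proof of the converse contains exactly the gap you identified, and that only the true direction [fibre-preserving $\Rightarrow \Lb_\vr^\hr = 0$], plus the shadow formula, is ever used later in the text.
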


\begin{proof}
	One has the following sequences of implications, where the left side proves the direct sense while the right side shows that $\ov\Lb$ verifies $\ov\Lb\cdot\Tr\pi_\A = \Tr\pi_{\tmp}\cdot\Lb$, which implies the indirect sense:
	\byalgn{
		\Tr\pi_\A &= \Tr\pi_\A\cdot\hr_{\RefCon{A}}\\
		\ov{\Lb}\cdot\Tr\pi_\A &= \ov{\Lb}\cdot\Tr\pi_\A\cdot\hr_{\RefCon{A}}\\
		\Tr\pi_\tmp\cdot\Lb &= \Tr\pi_\tmp\cdot\Lb\cdot\hr_{\RefCon{A}}\\
		\RefCon{\tmp}\cdot\Tr\pi_\tmp\cdot\Lb &= \RefCon{\tmp}\cdot\Tr\pi_\tmp\cdot\Lb_\hr\\
		\hr_\RefCon{\tmp}\cdot\Lb &= \hr_\RefCon{\tmp}\cdot\Lb_\hr\\
		\Lb^\hr &= \Lb_\hr^\hr\\
		\Lb_\vr^\hr + \Lb_\hr^\hr &= \Lb_\hr^\hr\\
		  \Lb_\vr^\hr &= \bold{0}
}{
		\ov\Lb\cdot\Tr\pi_\A
			&= \Tr\pi_\tmp \cdot \Lb \cdot \RefCon{\A}\cdot\Tr\pi_\A\\
			&= \Tr\pi_\tmp \cdot \Lb \cdot \hr_{\RefCon{\A}}\\
			&= \Tr\pi_\tmp \cdot \Lb_\hr\\
			&= \Tr\pi_\tmp \cdot \hr_{\RefCon{\A}}\cdot\Lb_\hr\\
			&= \Tr\pi_\tmp \cdot \Lb_\hr^\hr\\
			&= \Tr\pi_\tmp \cdot \Lb^\hr \hspace{5em}\text{since $\Lb_\vr^\hr = \bold0$}\\
			&= \Tr\pi_\tmp \cdot \hr_{\RefCon{\A}} \cdot \Lb\\
			&= \Tr\pi_\tmp \cdot \Lb
	}
\end{proof}

Furthermore, since $\ov{\Fb} : \Tr\BB \longto \Tr\EE$ is the macroscopic part of $\Fb$, it is required to be a map of the classical macroscopic model. As stated above, this means that $\ov{\Fb}$ is a gradient. Since $\ov{\Fb}$ is over $\ov \varphi$, this means
\begin{equation}
	\label{eq:ovF=Tovphi}
	\ov{\Fb} = \Tr\ov \varphi
\end{equation}
This determines $\Fb^\hr_\hr$. Physically this means that, if one were to zoom out and pause time, macroscopic vectors would be mapped via a gradient map, as is the case in classical continuum mechanics.\\

On the opposite side lie the microscopic spaces. In this paper, all materials considered will have the property that their microscopic spaces are a scaled-down version of their macroscopic spaces. This last sentence must be interpreted as stating that the classical theory of continuum mechanics is also valid on the microscopic spaces.\\

Mathematically, since $\Fb$ is fibre-preserving, microscopic vectors are mapped to microscopic vectors. The restriction of $\Fb$ on the microscopic spaces is therefore well-defined and corresponds to $\Fb_\vr^\vr$. The aforementioned physical requirement implies that $\Fb_\vr^\vr$ must also be a gradient. One therefore requires that for all $X \equiv \bmat{\ov X\\ Y} \in \B$:
\begin{equation}
	\label{eq:Fvv=Tphivv}
	\at{\Fb}_{\Vr\B} = \at{\left (\Tr\varphi\right )}_{\Vr\B}
\end{equation}
where $\at{\left (\Tr\varphi\right )}_{\Vr_X\B} \simeq \Tr_X\varphi_\ov X \simeq \varphi^\vr\left (\ov X\right )$ through $\Vr_X\B \simeq \Tr_X\B_\ov X$. This means that neither the purely macroscopic nor the purely microscopic terms break the holonomy. If the later is to be broken, it must therefore be by the coupling term $\Fb_\hr^\vr$. The later has been, until now, a generic term depending on $\ov X$ and $Y$. Since $Y$ is small, this dependency is required to be affine in $Y$.\\

All those properties are summarised in what are called physically acceptable (tangential) maps:\\

\begin{defi}{Physically acceptable tangential map}{lpa}
	Let $\bundle{\A}{\AA}$ and $\bundle{\D}{\DD}$ be two affine bundles. A map $\Lb : \Tr\A \longto \Tr\D$ is said to be a \underline{physically acceptable (tangential) map} from $\Tr\A$ to $\Tr\D$ if and only if:
	\begin{enumerate}
		\it $\Lb$ is a morphism for the punctual projection with shadow $\ell \in \aff\A\D$.
		\[
			\Lb \in \l[\aff\A\D]{\bundle{\Tr{\A}}{\A}}{\bundle{\Tr{\D}}{\D}}
		\]
		\it $\Lb$ is fibre-preserving for the macroscopic projection, with shadow $\ov{\Lb} := \Tr\ov\ell \in \l[\ov\ell]{\Tr\AA}{\Tr\DD}$.
		\[
			\Lb \in \fibpres[\Tr\ov\ell]{\bundle[\Tr\pi_{\A}]{\Tr{\A}}{\Tr{\AA}}}{\bundle[\Tr\pi_{\D}]{\Tr{\D}}{\Tr{\DD}}}
		\]
		\it $\Lb$ is a gradient on the micro-fibres.
		\algn{
			\at{\Lb}_{\Vr\A} &= \at{\left (\Tr\ell\right )}_{\Vr\A}
		}
		\it for all $\ub \in \Tr_a\A$ and every trivialising affine frame on $\Tr\A$ and $\Tr\D$ (see \cref{def:triv_coord}),\\
		the vectorial coordinate of $\Lb_a\cdot\ub \in \Tr_{\ell(a)}\D$ is \textbf{affine} in the vertical coordinate of $a\in\A$.
	\end{enumerate}
The \underline{set of physically acceptable tangential maps} from $\Tr\A$ to $\Tr\D$ is denoted
\algn{
	\lpa{\Tr\A}{\Tr\D} &\subset \l[\aff\A\D]{\bundle{\Tr{\A}}{\A}}{\bundle{\Tr{\D}}{\D}}\\
					 &\qquad \cap \fibpres{\bundle[\Tr\pi_{\A}]{\Tr{\A}}{\Tr{\AA}}}{\bundle[\Tr\pi_{\D}]{\Tr{\D}}{\Tr{\DD}}}
}
It inherits from both the projection structures of $\l[\l\A\D]{\bundle{\Tr{\A}}{\A}}{\bundle{\Tr{\D}}{\D}}$ and $\fibpres[\Tr\ov\ell]{\bundle[\Tr\pi_{\A}]{\Tr{\A}}{\Tr{\AA}}}{\bundle[\Tr\pi_{\D}]{\Tr{\D}}{\Tr{\DD}}}$. In accordance with previous notations, the over-line notation $\overline{\Lb} : \Tr\AA \longto \Tr\DD$ for $\Lb \in \lpa{\Tr\A}{\Tr\D}$ shall be used to refer to its shadow as an element of the later \partxt{\ie{} its "macroscopic" shadow}.
\end{defi}

Intuitively, one has that $\Fb : \Tr\B \longto \Tr\E$ is physically acceptable if and only if it is over a physically acceptable map $\varphi \in \l\B\E$, $\left (\Fb_\hr^\hr, \Fb_\vr^\vr\right )$ are identifiable with $\left (\Tr\ov \varphi, \left (\Tr \varphi\right )_\vr\right )$ and $\Fb_\hr^\vr$ is affine in the microscopic punctual coordinate. More formally, if one specifies everything in a local trivialisation, one gets the following lemma:\\

\begin{lemm}
	Let $\Lb : \Tr\A \longto \Tr\D$ be a smooth map between vector bundles. Then $\Lb \in \lpa{\Tr\A}{\Tr\D}$ if and only if, in every affine trivialising coordinates on $\A$ and $\D$, $\Lb$ can be expressed in the induced frames in the following form:
	\algn{
			&\Lb& &:& &\fun{\Tr\A \overset{\mathrm{loc.}}\equiv \Tr\left (\AA\times\F_\A\right )}{\Tr\D\overset{\mathrm{loc.}}\equiv \Tr\left (\DD\times\F_\D\right )}{%
			\bmat{%
				\bmat{\ov X\\ Y}\\%
				\bmat{\delta \ov X \\\delta Y}}%
			}{%
			\bmat{%
				\bmat{%
					\ov\ell\left (\ov X\right )%
					\\~\\%
					\ell^\vr\left (\ov X\right )\cdot Y%
						&+&	t^\vr\left (\ov X\right )
				}\\~\\%
				\bmat{%
					\ider{\ov X}{\ov \ell} \cdot \delta \ov X%
					\\~\\%
					\ell^\vr\left (\ov X\right )\cdot\delta Y%
						+	\left [ {\Lb}^\mathrm{coupl}\left (\ov X\right )\cdot Y \right .
						+ \left . {\Tb}^\mathrm{coupl}\left (\ov X\right )\right ] \cdot \delta \ov X%
					}%
				}%
			}
}
with $\ov \ell$, $\ell^\vr$, $t^\vr$, $\Lb^\mathrm{coupl}$ and ${\Tb}^\mathrm{coupl}$ all smooth and trivialisation-dependent and where $A\cdot b$ \partxt{resp. $A \cdot b \cdot c$} denotes linear \partxt{resp. bilinear} dependency \partxt{everything is trivialised, hence in $\RR^n$, $n \in \NN$}.\\

In such a case, $\displaystyle\left [\Lb^\mathrm{coupl}\left (\ov X\right )\cdot Y + {\Tb}^\mathrm{coupl}\left (\ov X\right )\right ]$ is the coordinate representation of $\Lb_\hr^\vr$. When $\Lb = \Tr\ell$, this is $\left [\ider{\ov X}{\ell^\vr}\cdot Y + \ider{\ov X}{t^\vr}\right ]$. In general, however, $\Lb^\mathrm{coupl}$ and $\Tb^\mathrm{coupl}$ are entirely free.
\end{lemm}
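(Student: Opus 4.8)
The plan is to prove the equivalence by translating, one at a time, the four coordinate-free conditions of \cref{def:lpa} into the four structural features of the displayed block matrix, and conversely. Throughout I fix affine trivialising coordinates on $\A$ and $\D$ together with the induced frames on $\Tr\A$ and $\Tr\D$, writing a point of $\Tr\A$ as $\bmat{\bmat{\ov X\\ Y}\\\bmat{\delta\ov X\\\delta Y}}$. Condition $(1)$ is the starting point: it says $\Lb$ is a vector-bundle morphism over the punctual projections whose shadow is an affine bundle morphism $\ell \in \aff\A\D$. Unwinding the first half, the image base point $\bmat{\ov x\\ y}$ depends only on the argument base point $\bmat{\ov X\\ Y}$ and the image vector $\bmat{\delta\ov x\\\delta y}$ is linear in $\bmat{\delta\ov X\\\delta Y}$; unwinding the second half via the coordinate description of affine bundle morphisms already obtained in \cref{sect:punctual_map} gives $\ell : \bmat{\ov X\\ Y}\mapsto \bmat{\ov\ell(\ov X)\\ \ell^\vr(\ov X)\cdot Y + t^\vr(\ov X)}$. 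Thus after $(1)$ the image vector reads $\bmat{\delta\ov x\\\delta y} = \bmat{A&B\\ C&D}\bmat{\delta\ov X\\\delta Y}$ with blocks $A,B,C,D$ a priori depending on the base point $(\ov X,Y)$, and it remains to pin these down.

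For the forward direction I then impose conditions $(2)$--$(4)$ in turn. Condition $(2)$, through \cref{lemm:fvh_eq_zero}, forces $\Lb_\vr^\hr = 0$, i.e. $B = 0$, and fibre-preservation for the macroscopic projection forces the macroscopic output to be independent of $Y$ and $\delta Y$, so $A = A(\ov X)$; the further requirement that the induced shadow be $\Tr\ov\ell$ --- which by \cref{lemm:fvh_eq_zero} is $\Tr\pi_\D\cdot\Lb\cdot\RefCon\A$ and, since $B=0$, is insensitive to the connection coefficients of $\RefCon\A$ --- identifies $A = \ider{\ov X}{\ov\ell}$. Condition $(3)$ is read on the vertical bundle $\Vr\A = \{\delta\ov X = 0\}$: there $\Lb$ restricts to $\delta Y\mapsto D\cdot\delta Y$ in the fibre, while $\Tr\ell$ restricts to $\delta Y\mapsto\ell^\vr(\ov X)\cdot\delta Y$ (differentiating $\ell$ at fixed $\ov X$), whence $D = \ell^\vr(\ov X)$. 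Finally, condition $(4)$ requires the whole image vector to be affine in the vertical coordinate $Y$; as the three already-determined blocks are independent of $Y$, this constrains only $C(\ov X,Y)$, forcing it to be affine in $Y$, i.e. $C = \Lb^{\mathrm{coupl}}(\ov X)\cdot Y + \Tb^{\mathrm{coupl}}(\ov X)$. Assembling the four blocks yields exactly the displayed form.

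The converse is a direct verification: starting from the displayed form one checks each clause of \cref{def:lpa}. The base-point map is visibly an affine bundle morphism (condition $(1)$), the macroscopic output $\ider{\ov X}{\ov\ell}\cdot\delta\ov X$ is independent of $Y$ and $\delta Y$ so $\Lb$ is fibre-preserving for the macroscopic projection with shadow $\Tr\ov\ell$ (condition $(2)$), restricting to $\delta\ov X = 0$ reproduces $\at{(\Tr\ell)}_{\Vr\A}$ (condition $(3)$), and the image vector is manifestly affine in $Y$ (condition $(4)$). The block identification closes the statement: reading the bottom-left entry of the matrix --- equivalently setting $\delta Y = 0$ and extracting the vertical output --- shows that the coordinate representation of $\Lb_\hr^\vr$ for the decomposition induced by the trivialisation is $\Lb^{\mathrm{coupl}}(\ov X)\cdot Y + \Tb^{\mathrm{coupl}}(\ov X)$, and specialising to $\Lb = \Tr\ell$ and differentiating $\ell^\vr(\ov X)\cdot Y + t^\vr(\ov X)$ in $\ov X$ recovers $\ider{\ov X}{\ell^\vr}\cdot Y + \ider{\ov X}{t^\vr}$.

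I expect the only genuinely delicate points to be bookkeeping rather than conceptual: matching the abstract block symbols $\Lb_\hr^\vr$ and $\Lb_\vr^\hr$ (defined via the reference connection) with the coordinate entries, which is clean here because the vertical space is canonical and the vanishing of $\Lb_\vr^\hr$ makes the reference-connection coefficients drop out of the shadow formula of \cref{lemm:fvh_eq_zero}; and keeping straight which constraint comes from which condition --- in particular that the $Y$-independence of the macroscopic block $A$ is a consequence of fibre-preservation in $(2)$, not of the affineness in $(4)$, which only acts on the coupling block $C$.
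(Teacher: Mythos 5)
Your proof is correct. The paper itself states this lemma without proof---it is offered as the coordinate formalisation of \cref{def:lpa} (``if one specifies everything in a local trivialisation, one gets the following lemma'')---and your condition-by-condition unwinding, attributing the punctual affine form to condition (1), the vanishing of the mixed block and $A=\ider{\ov X}{\ov \ell}$ to condition (2) via \cref{lemm:fvh_eq_zero}, $D=\ell^\vr$ to condition (3), and the affine-in-$Y$ coupling block to condition (4), together with the direct converse verification and the identification of $\Lb_\hr^\vr$ in the trivialisation-induced decomposition, is exactly the routine argument the authors leave implicit.
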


\subsection{Placement map}

The previous discussion specifies how the desired generalised placement maps must be physically acceptable. However, this condition does not suffice. In particular, \cref{ax:std_int} states that the macroscopic part are classical placement maps, which are required to be differential embeddings. This relates to the non-interpenetrability of matter. As this is not a purely macroscopic phenomena, the whole generalised placement map is required to be a (physically acceptable) differential embedding. This leads to the following definitions:\\

\begin{defi}{Generalised placement maps}{conf}

An application $\varphi : \B \longto \E$ is a \underline{generalized punctual placement map} if and only if:
	\begin{enumerate}
		\it $\varphi$ is a $\C^1$-differential embedding. That is, it is a differentiable injective bi-continuous map onto its image with an everywhere injective differential.
		\it $\varphi \in \aff\B\E$ is an affine bundle morphism \partxt{\ie{} is physically acceptable}
	\end{enumerate}

\hrulefill\\

An application $\Fb : \Tr\B \longto \Tr\E$ is a \underline{generalized first-order placement map} if and only if:
	\begin{enumerate}
		\it $\Fb$ is a differential embedding
		\it $\Fb$ is physically acceptable:
		\[
			\Fb \in \lpa{\Tr\B}{\Tr\E}
		\]
		\it The punctual shadow of $\Fb$ \partxt{for the punctual projections $\pi_{\Tr\B}$ and $\pi_{\Tr\E}$} is a punctual configuration $\varphi : \B \longhookrightarrow \E$.
	\end{enumerate}
	
The \underline{set of generalized first-order placement map} from $\B$ to $\E$ will be denoted
\algn{
	\conf{\Tr\B}{\Tr\E} \subset \lpa{\Tr\B}{\Tr\E}
}
\end{defi}

In this definition three things are to be noticed:
\begin{enumerate}
	\it In accordance with \cref{def:lpa} and \cref{ax:std_int}, the overline notation is used for the macroscopic shadow $\overline{\Fb} : \Tr\BB \longhookrightarrow \Tr\EE$ of $\Fb$, not to be confused with its punctual shadow $\varphi : \B \longto \E$.
	\it Although the micro-spaces are interpreted as infinitesimal neighbourhoods, the microscopic part of $\varphi$ is not required to be the "gradient" of the macroscopic part\footnote{This notion is hill-defined in general but makes sense when $\B\simeq\Tr\BB$ $-$ which is the case for crystals $-$ or more generally when $\B\simeq\Tr_\BB\MM$ for $\BB \subset \MM$ and $\dim\left (\MM\right )=3$ $-$ which is the case for shells \partxt{$\dim\left (\BB\right )=2$} and beams \partxt{$\dim\left (\BB\right )=1$}.}. This freedom is necessary in order to allow for torsion (\ie{} dislocations) in the material \cite[8]{peshkov2019continuum}.
	\it Similarly, nothing states that $\Fb$ corresponds to $\Tr\varphi$. In fact, previous works \partxt{\cite[16]{nguyen2021tangent}, \cite{nguyen2021geometric,KATA}} showed that this cannot be the case if the material has some curvature. Requiring the curvature to be zero is pretty common in the literature and corresponds to the requirement that the material should only exhibit dislocation defects and no disclination \cite{kroner1980continuum, peshkov2019continuum, sahoo1984elastic, le1996determination}.
\end{enumerate}

Most of the properties enumerated in \cref{def:conf} can be summarized in the circular commutative diagram of \cref{fig:comm_diag}.

\begin{figure}[!htb]
\begin{center}
\begin{tikzpicture}[->,>=stealth',shorten >=1pt,auto,node distance=2.8cm,semithick]
  \tikzstyle{every state}=[fill=none,draw=none,text=black]

  \node[state]		(TM) 							{$\Tr\B$};
  \node[state]		(M)		[below left of = TM]	{$\B$};
  \node[state]		(TBB) 	[below right of = TM]	{$\Tr\BB$};
  \node[state]		(BB) 	[below right of = M]	{$\BB$};%

  \node[state]		(TE) 	[above of = TM]			{$\Tr\E$};
  \node[state]		(E)		[left of = M]			{$\E$};
  \node[state]		(TEE) 	[right of = TBB]		{$\Tr\EE$};
  \node[state]		(EE) 	[below of = BB]			{$\EE$};

  \draw [-{To}{To}] (TM) 	edge [bend right]		node [above left] 	{$\pi_{\Tr\B}$} (M)
   		(TM) 	edge [bend left]		node [above right] 	{$\Tr\pi_\B$} (TBB)
  		(M) 	edge [bend right]		node [below left] 	{$\pi_\B$} (BB)
 		(TBB) 	edge [bend left]		node [below right] 	{$\pi_{\Tr\BB}$} (BB)%
 		
 		(TE) 	edge [bend right=35]	node [above left] 	{$\pi_{\Tr\E}$} (E)
  		(TE) 	edge [bend left=35]		node [above right] 	{$\Tr\pi_\E$} (TEE)
  		(E) 	edge [bend right=35]	node [below left] 	{$\pi_\E$} (EE)
 		(TEE) 	edge [bend left=35]		node [below right] 	{$\pi_{\Tr\EE}$} (EE);
 		
  \draw [{Hooks[right, width=10, length=4]}->]
  		(TM)	edge					node				{$\Fb$} (TE)
 		(M)		edge					node				{$\varphi$} (E)
 		(TBB)	edge					node				{$\Tr\overline\varphi$} (TEE)
 		(BB)	edge					node				{$\overline\varphi$} (EE);
\end{tikzpicture}
\end{center}
\caption{The circular commutative diagram associated to a generalized first-order placement map $\Fb : \Tr\B \longto \Tr\E$. All arrows represent smooth maps, $\twoheadrightarrow$ means the map is surjective and $\hookrightarrow$ means it is injective. The top-left quadrant comes from $\Fb \in \l{\bundle{\Tr\B}\B}{\bundle{\Tr\E}\E}$, the top-right from $\Fb \in \fibpres{\bundle[\Tr\pi_\B]{\Tr\B}{\Tr\BB}}{\bundle[\Tr\pi_\E]{\Tr\E}{\Tr\EE}}$, the bottom-left from $\varphi \in \aff\B\E$ and the bottom-right from $\Tr\ov{\varphi}\in\l{\Tr\BB}{\Tr\EE}$.}
\label{fig:comm_diag}
\end{figure}

\subsubsection*{Inverse map}

Since $\Fb$ is injective $\Fb^{-1} : \Imr\left (\Fb\right ) \longto \B$ is a well-defined application. One may wonder whether $\Fb^{-1}$ is fibre-preserving. This question is answered by the following result:

\begin{lemm}
	Let $\bundle{\A}{\AA}$ and $\bundle{\D}{\DD}$ be two projection structures and $f \in \fibpres{\A}{\D}$ be injective with an injective shadow. Then:
	\[
		f^{-1} \in \fibpres{\bundle[\pi_{\A}]{\Imr\left (f\right )}{\Imr\left (\ov{f}\right )}}{\A}~~~\text{and}~~~\ov{f^{-1}}=\ov{f}^{-1}
	\]
\end{lemm}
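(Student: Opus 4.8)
The plan is to reduce the entire statement to a single commutation identity, after which both asserted conclusions drop out by unwinding \cref{def:morph}. First I would fix the objects and the projection structure at play. Since $f$ is injective, $f^{-1} : \Imr\left(f\right) \longto \A$ is a genuine map, and since $\ov f$ is injective, $\ov f^{-1} : \Imr\left(\ov f\right) \longto \AA$ is one as well. I would equip $\Imr\left(f\right) \subset \D$ with the projection structure inherited from $\D$, namely the restriction $\at{\pi_\D}_{\Imr\left(f\right)}$, and check that it lands \emph{onto} $\Imr\left(\ov f\right)$: for $\ov d = \ov f\left(\ov a\right) \in \Imr\left(\ov f\right)$, surjectivity of $\pi_\A$ yields $a \in \A$ with $\pi_\A\left(a\right) = \ov a$, and then $f\left(a\right) \in \Imr\left(f\right)$ satisfies $\pi_\D\left(f\left(a\right)\right) = \ov f\left(\pi_\A\left(a\right)\right) = \ov d$ by fibre-preservation of $f$. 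This confirms that $\bundle[\pi_\D]{\Imr\left(f\right)}{\Imr\left(\ov f\right)}$ is a legitimate projection structure in the sense of \cref{def:proj}.

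The core step is then to establish
\[
	\pi_\A \circ f^{-1} = \ov f^{-1} \circ \at{\pi_\D}_{\Imr\left(f\right)} \qquad \text{on } \Imr\left(f\right).
\]
Here I would take an arbitrary $d \in \Imr\left(f\right)$ and write $d = f\left(a\right)$ for its unique preimage $a = f^{-1}\left(d\right)$. The left-hand side is then simply $\pi_\A\left(a\right)$. For the right-hand side, the fibre-preservation identity $\pi_\D \circ f = \ov f \circ \pi_\A$ of $f$ gives $\at{\pi_\D}_{\Imr\left(f\right)}\left(d\right) = \pi_\D\left(f\left(a\right)\right) = \ov f\left(\pi_\A\left(a\right)\right)$, and applying $\ov f^{-1}$ collapses this to $\pi_\A\left(a\right)$ since $\ov f^{-1} \circ \ov f = \Id$. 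Both sides therefore equal $\pi_\A\left(a\right)$, which is the asserted identity.

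Finally I would read off both conclusions from this identity. It exhibits a map, namely $\ov f^{-1}$, through which $\pi_\A \circ f^{-1}$ factors across $\at{\pi_\D}_{\Imr\left(f\right)}$; by \cref{def:morph} this is exactly the assertion that $f^{-1}$ is fibre-preserving, so $f^{-1} \in \fibpres{\bundle[\pi_\D]{\Imr\left(f\right)}{\Imr\left(\ov f\right)}}{\A}$. Moreover, because $\at{\pi_\D}_{\Imr\left(f\right)}$ is surjective onto $\Imr\left(\ov f\right)$ (shown in the first step), the shadow is \emph{uniquely} determined by the factoring identity, whence $\ov{f^{-1}} = \ov f^{-1}$. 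The verification itself is a direct diagram chase with no genuine difficulty; the only points demanding care — and thus the nearest thing to an obstacle — are the bookkeeping of the correct projection structure carried by $\Imr\left(f\right)$ (the one inherited from $\D$ via $\at{\pi_\D}_{\Imr\left(f\right)}$) and the surjectivity of that projection onto $\Imr\left(\ov f\right)$, which is precisely what makes the shadow both well-defined and unique.
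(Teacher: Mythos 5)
Your proof is correct, and it is organised differently from the paper's. The paper argues in two steps: it first proves fibre-preservation of $f^{-1}$ by contradiction, using the same-fibre characterisation in \cref{def:morph} — if two points of $\Imr\left(f\right)$ lying in a common fibre had preimages in distinct fibres, then applying injectivity of $\ov{f}$ followed by fibre-preservation of $f$ would force the two points themselves into distinct fibres — and only then identifies the shadow, by a separate computation showing that $\ov{f^{-1}}$ and $\ov{f}$ compose to the identity on both sides. You instead name the candidate shadow $\ov{f}^{-1}$ up front and verify the single commutation identity $\pi_\A \circ f^{-1} = \ov{f}^{-1}\circ \at{\pi_\D}_{\Imr\left(f\right)}$, which delivers existence and identification of the shadow in one stroke. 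The ingredients are identical in both arguments (the identity $\pi_\D\circ f=\ov{f}\circ\pi_\A$ and the injectivity of $\ov{f}$), but your direct route is leaner and makes explicit two points the paper leaves implicit: that $\at{\pi_\D}_{\Imr\left(f\right)}$ is surjective onto $\Imr\left(\ov{f}\right)$, so that $\bundle[\pi_\D]{\Imr\left(f\right)}{\Imr\left(\ov{f}\right)}$ is a genuine projection structure in the sense of \cref{def:proj} (you also rightly read the statement's subscript $\pi_\A$ as the restriction of $\pi_\D$, which is the only sensible interpretation), and that this surjectivity is precisely what makes the shadow unique, so that writing $\ov{f^{-1}}=\ov{f}^{-1}$ is unambiguous.
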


\begin{proof}
	By contradiction, if $f^{-1}$ is not fibre-preserving then there exist $\ov{y} \in \AA^2$ and $(y, y') \in \A_\ov{y}^2$ such that $\ov{f^{-1}(y)} \neq \ov{f^{-1}(y')}$. But since $\ov{f}$ is injective :
	\algn{
					\ov{f^{-1}(y)} \neq \ov{f^{-1}(y')}
		\implies&	&\ov{f}\left (\ov{f^{-1}(y)}\right ) &\neq \ov{f}\left (\ov{f^{-1}(y'}\right )\\
		\implies&	&\ov{f\left (f^{-1}(y)\right )} &\neq \ov{f\left (f^{-1}(y')\right )}\\
		\implies&	&\ov{y} &\neq \ov{y'}
	}
	For the shadow one has, for $(a, d) \in \A\times\D$:
	\byalgn{
		\ov{f}\left (\ov{f^{-1}}(\ov{d})\right )
		&=& \ov{f}\left (\ov{f^{-1}(d)}\right )\\
		&=& \ov{f\left (f^{-1}(d)\right )}\\
		&=& \ov{d}
	}{
		\ov{f^{-1}}\left (\ov{f}(\ov{a})\right )
		&=& \ov{f^{-1}}\left (\ov{f(a)}\right )\\
		&=& \ov{f^{-1}\left (f(a)\right )}\\
		&=& \ov{a}
	}
\end{proof}

This result implies the following crucial lemma:

\begin{lemm}
Let $\bundle{\A}{\AA}$ and $\bundle{\D}{\DD}$ be two affine bundles and $\Lb \in \conf{\Tr\A}{\Tr\D}$. Then:
$$\boxed{\Lb^{-1} \in \conf{\Imr\left (\Lb\right )}{\Tr\D}}$$
with the following structures on $\Imr\left (\Lb\right )$:
$$\bundle[\pi_{\Tr\D}]{\Imr\left (\Lb\right )}{\Imr\left (\ov{\Lb}\right )}~~~~~\text{and}~~~~~\bundle[\Tr\pi_\D]{\Imr\left (\Lb\right )}{\Imr\left (\ell\right )}$$
\end{lemm}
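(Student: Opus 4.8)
The plan is to verify, for $\Lb^{-1} : \Imr\left(\Lb\right) \longto \Tr\A$, the three defining conditions of a generalised first-order placement map in \cref{def:conf}: that it is a differential embedding, that it is physically acceptable in the sense of \cref{def:lpa}, and that its punctual shadow is a generalised punctual placement. Throughout, the two relevant structures on $\Imr\left(\Lb\right)$ are those it inherits as a subset of $\Tr\D$: the punctual projection $\pi_{\Tr\D}$ restricts to a surjection onto $\Imr\left(\ell\right)$ (because $\pi_{\Tr\D}\cdot\Lb = \ell\cdot\pi_{\Tr\A}$) and the macroscopic projection $\Tr\pi_\D$ restricts to a surjection onto $\Imr\left(\ov\Lb\right)$ (because $\Tr\pi_\D\cdot\Lb = \ov\Lb\cdot\Tr\pi_\A$). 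Since $\Lb$ is a differential embedding, $\Imr\left(\Lb\right)$ is an embedded submanifold diffeomorphic to $\Tr\A$ and $\Lb^{-1}$ is automatically a differential embedding (indeed a diffeomorphism onto $\Tr\A$), which settles the first condition.

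First I would obtain the fibre-preservation of $\Lb^{-1}$ for both structures by invoking the preceding lemma on inverses of fibre-preserving maps. Applied to the punctual structures, $\Lb$ is injective with injective shadow $\ell$ (a punctual configuration, hence an embedding), so $\Lb^{-1}$ is fibre-preserving over $\pi_{\Tr\D}$ with shadow $\ell^{-1} : \Imr\left(\ell\right) \longto \A$. Applied to the macroscopic structures, $\Lb$ is injective with shadow $\ov\Lb = \Tr\ov\ell$, which is injective because $\ov\ell$ is an embedding; hence $\Lb^{-1}$ is fibre-preserving over $\Tr\pi_\D$ with shadow $\ov{\Lb^{-1}} = \left(\Tr\ov\ell\right)^{-1} = \Tr\left(\ov\ell^{-1}\right)$, which is precisely the macroscopic shadow demanded of a physically acceptable map by \cref{def:lpa}.

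Next I would check the algebraic conditions fibre by fibre. Since $\Lb$ is linear and injective on each punctual fibre $\Tr_a\A$, its restriction is a linear isomorphism onto $\Imr\left(\Lb_a\right)$, whose inverse is linear; likewise each $\ell_{\ov X}$ is affine and injective, so $\ell^{-1}$ is an affine bundle morphism, giving condition $(1)$ of \cref{def:lpa}. For the gradient-on-micro-fibres condition $(3)$ I would first show $\Lb\left(\Vr\A\right) = \Vr\D \cap \Imr\left(\Lb\right)$: the inclusion $\subset$ is fibre-preservation, while $\supset$ follows because $\Tr\pi_\D\cdot\Lb\cdot\ub = \ov\Lb\cdot\Tr\pi_\A\cdot\ub$ together with injectivity of $\ov\Lb$ forces any $\ub$ with $\Lb\cdot\ub\in\Vr\D$ to lie in $\Vr\A$. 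On $\Vr\A$ one has $\at{\Lb}_{\Vr\A} = \at{\left(\Tr\ell\right)}_{\Vr\A}$ (whose image is tangent to $\Imr\left(\ell\right)$), so inverting the resulting bijection yields $\at{\Lb^{-1}}_{\Vr\D\cap\Imr\left(\Lb\right)} = \at{\Tr\left(\ell^{-1}\right)}_{\Vr\D\cap\Imr\left(\Lb\right)}$, which is exactly condition $(3)$ for $\Lb^{-1}$.

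Finally, for the affineness condition $(4)$ and, above all, for the smoothness of all the data, I would invert the explicit block-triangular coordinate representation of the preceding coordinate-representation lemma. Solving
$$\ov x = \ov\ell\left(\ov X\right),\quad y = \ell^\vr\left(\ov X\right)\cdot Y + t^\vr\left(\ov X\right),\quad \delta\ov x = \ider{\ov X}{\ov\ell}\cdot\delta\ov X,\quad \delta y = \ell^\vr\left(\ov X\right)\cdot\delta Y + \left[\cdots\right]\cdot\delta\ov X$$
for the source coordinates expresses $\Lb^{-1}$ in exactly the form prescribed by that lemma, with a vertical image affine in $y$; reading off the components then gives $\Lb^{-1}\in\lpa{\Imr\left(\Lb\right)}{\Tr\A}$ and identifies its punctual shadow $\ell^{-1}$ as a generalised punctual placement. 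The genuine obstacle is smoothness: as $\Imr\left(\Lb\right)$ is only a subset of $\Tr\D$, one must be sure $\Lb^{-1}$ is smooth on it, which is guaranteed because $\Lb$ is a differential embedding, so that $\Imr\left(\Lb\right)$ is an embedded submanifold and $\Lb^{-1}$ its smooth inverse. In the coordinate inversion this manifests as the smoothness of $\ov\ell^{-1}$ (inverse function theorem, $\ov\ell$ being an embedding) and of the inverses of the injective linear maps $\ell^\vr\left(\ov X\right)$ and $\ider{\ov X}{\ov\ell}$ on the image subspaces parametrising $\Imr\left(\Lb\right)$. Assembling the three conditions of \cref{def:conf} concludes that $\Lb^{-1}$ is a generalised first-order placement map from $\Imr\left(\Lb\right)$ into $\Tr\A$.
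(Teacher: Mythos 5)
Your proof is correct and follows the same route as the paper: the paper states this lemma without any proof at all, presenting it as an immediate consequence of the preceding lemma on inverses of injective fibre-preserving maps (applied to both the punctual and the macroscopic projection structures), which is exactly the engine of your argument. The additional verifications you supply — the embedding property of $\Lb^{-1}$, fibrewise affineness, the gradient-on-micro-fibres condition via $\Lb\left(\Vr\A\right)=\Vr\D\cap\Imr\left(\Lb\right)$, and the block-triangular coordinate inversion for affineness in the vertical coordinate — are precisely the details the paper leaves implicit, and they check out.
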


\subsubsection*{Block-wise decomposition}

Recall \cref{lemm:fvh_eq_zero} stating that $\Fb \in \lpa{\Tr\A}{\Tr\D}$ takes a lower-triangular form. Consequentially, the inverse also takes a special form:

\begin{lemm}
	Let $\bundle{\A}{\AA}$ and $\bundle{\D}{\DD}$ be two affine bundles and $\Lb \in \lpa{\Tr\A}{\Tr\D}$ be an injective map with both shadows injective. Then:
	\algn{
		\Lb_\hr^\hr \cdot \left (\Lb^{-1}\right )_\hr^\hr &= \hr_{\RefCon{\D}}
					&\hspace{5em}&  \Lb_\vr^\vr \cdot \left (\Lb^{-1}\right )_\vr^\vr = \vr_{\RefCon{\D}}\\
		\left (\Lb^{-1}\right )_\hr^\hr \cdot \Lb_\hr^\hr &= \hr_{\RefCon{\A}}
					&&				 \left (\Lb^{-1}\right )_\vr^\vr \cdot \Lb_\vr^\vr = \vr_{\RefCon{\A}}
	}
	and
	\algn{
		\left (\Lb^{-1}\right )_\hr^\vr &= -\left (\Lb^{-1}\right )_\vr^\vr\cdot\Lb_\hr^\vr\cdot\left (\Lb^{-1}\right )_\hr^\hr
	}
\end{lemm}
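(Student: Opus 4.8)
The plan is to reduce the statement to ordinary $2\times 2$ lower-triangular block algebra, exactly as one inverts a lower-triangular matrix, the only genuine subtlety being that $\Lb^{-1}$ is defined merely on $\Imr(\Lb)$. The engine of the whole proof is the single observation that \emph{both} $\Lb$ and $\Lb^{-1}$ have a vanishing $\vr^\hr$-block.

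First I would establish this double triangularity. For $\Lb$ itself it is immediate: since $\Lb\in\lpa{\Tr\A}{\Tr\D}$ is in particular fibre-preserving for the macroscopic projections, \cref{lemm:fvh_eq_zero} gives $\Lb_\vr^\hr=\bold0$, equivalently $\hr_{\RefCon\D}\cdot\Lb\cdot\vr_{\RefCon\A}=\bold0$ with $\Lb\cdot\vr_{\RefCon\A}$ valued in $\Vr\D$. For $\Lb^{-1}$ I would chain the two preceding lemmas: being injective with both shadows injective, $\Lb^{-1}$ is (on $\Imr(\Lb)$) a morphism for the punctual projection with punctual shadow $\ell^{-1}$ and is fibre-preserving for the macroscopic projection with macroscopic shadow $\ov\Lb^{-1}$. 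Applying \cref{lemm:fvh_eq_zero} to $\Lb^{-1}$ then yields $(\Lb^{-1})_\vr^\hr=\bold0$, i.e. $\hr_{\RefCon\A}\cdot\Lb^{-1}\cdot\vr_{\RefCon\D}=\bold0$ with $\Lb^{-1}\cdot\vr_{\RefCon\D}$ valued in $\Vr\A$.

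With both triangularity relations in hand, the diagonal identities follow by expanding each block product, inserting $\Id=\hr_\bullet+\vr_\bullet$ in the middle, and using $\Lb\cdot\Lb^{-1}=\Id$ on $\Imr(\Lb)$ together with $\Lb^{-1}\cdot\Lb=\Id$ on $\Tr\A$. For instance, writing $\vr_{\RefCon\A}=\Id-\hr_{\RefCon\A}$,
\[
	\Lb_\vr^\vr\cdot(\Lb^{-1})_\vr^\vr = \vr_{\RefCon\D}\cdot\Lb\cdot\Lb^{-1}\cdot\vr_{\RefCon\D} - \vr_{\RefCon\D}\cdot\Lb\cdot\hr_{\RefCon\A}\cdot\Lb^{-1}\cdot\vr_{\RefCon\D},
\]
whose first term collapses to $\vr_{\RefCon\D}$ and whose second term vanishes because $\hr_{\RefCon\A}\cdot\Lb^{-1}\cdot\vr_{\RefCon\D}=\bold0$; the same bookkeeping, alternately invoking one or the other triangularity relation to annihilate the single surviving cross-term, produces the remaining three diagonal equalities. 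The off-diagonal formula comes from the $\hr^\vr$-block of $\Lb^{-1}\cdot\Lb=\Id$, which is $\bold0$ and expands to $(\Lb^{-1})_\hr^\vr\cdot\Lb_\hr^\hr+(\Lb^{-1})_\vr^\vr\cdot\Lb_\hr^\vr=\bold0$; right-multiplying by $(\Lb^{-1})_\hr^\hr$, substituting the already-proven $\Lb_\hr^\hr\cdot(\Lb^{-1})_\hr^\hr=\hr_{\RefCon\D}$, and using $(\Lb^{-1})_\hr^\vr\cdot\hr_{\RefCon\D}=(\Lb^{-1})_\hr^\vr$ isolates $(\Lb^{-1})_\hr^\vr=-(\Lb^{-1})_\vr^\vr\cdot\Lb_\hr^\vr\cdot(\Lb^{-1})_\hr^\hr$.

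The main obstacle is domain bookkeeping rather than any deep computation. Because $\Lb^{-1}$ is defined only on $\Imr(\Lb)$, I must check that each intermediate vector — in particular $\hr_{\RefCon\D}\cdot\xi$ for $\xi\in\Imr(\Lb)$ — lies back in $\Imr(\Lb)$ before $\Lb^{-1}$ is applied, so that the cancellations $\Lb\cdot\Lb^{-1}=\Id$ and the idempotencies $\hr_\bullet^2=\hr_\bullet$ are legitimate and the blocks $(\Lb^{-1})_\hr^\hr=\hr_{\RefCon\A}\cdot\Lb^{-1}\cdot\hr_{\RefCon\D}$, etc., are even well-defined. Here the triangularity of $\Lb$ pays off a second time: since $\Lb\cdot\vr_{\RefCon\A}$ is valued in $\Vr\D$, one has $\Vr\D\supseteq\Imr(\Lb_\vr^\vr)$ and $\Imr(\Lb)\supseteq\Imr(\Lb_\vr^\vr)$, so in the placement setting where $\Lb_\vr^\vr$ is a bijection $\Vr\A\simeq\Vr\D$ (equal ranks) one gets $\Vr\D\subseteq\Imr(\Lb)$, whence $\Imr(\Lb)$ is stable under $\vr_{\RefCon\D}$ and therefore under $\hr_{\RefCon\D}$. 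Once this stability is recorded, every line above is a formal equality of linear maps on the appropriate restriction, and the verification is routine.
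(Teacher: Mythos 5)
Your proposal is correct and follows essentially the same route as the paper's proof: both reduce the statement to lower-triangular block algebra, cancelling each cross-term through $\Lb_\vr^\hr=\bold0$ or $\left (\Lb^{-1}\right )_\vr^\hr=\bold0$ and collapsing the surviving term with $\Lb\cdot\Lb^{-1}=\Id$ or $\Lb^{-1}\cdot\Lb=\Id$. The paper writes out only the first diagonal identity and invokes ``the same reasoning'' for the rest, so your explicit derivation of $\left (\Lb^{-1}\right )_\vr^\hr=\bold0$ from the two preceding lemmas, of the off-diagonal formula, and of the stability of $\Imr\left (\Lb\right )$ under $\hr_{\RefCon{\D}}$ simply makes precise what the paper leaves implicit.
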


\begin{proof}
	\algn{
		\Lb_\hr^\hr\cdot\left (\Lb^{-1}\right )_\hr^\hr &= \Lb_\hr^\hr\cdot\left (\Lb^{-1}\right )_\hr &\hspace{5em}& \text{since $\Lb_\hr^\hr\cdot\left (\Lb^{-1}\right )_\hr^\vr = \bold{0}$}\\
				&= \Lb^\hr\cdot\left (\Lb^{-1}\right )_\hr && \text{since $\Lb_\vr^\hr = 0$}\\
				&=  \hr_{\RefCon{\D}}\cdot\Lb\cdot\Lb^{-1}\cdot\hr_{\RefCon{\D}}\\
				&=  \hr_{\RefCon{\D}}
	}
	The same reasoning applies for the other three equalities.
\end{proof}

The relations for the diagonal parts mean that, on $\Hr^\Gammabref\B$, $\left (\Fb^{-1}\right )_\hr^\hr$ is the inverse of $\Fb_\hr^\hr$. Similarly, $\Fb_\hr^\hr$ is the inverse of $\left (\Fb^{-1}\right )_\hr^\hr$ on $\Hr^\gammab\E$ and $\Fb_\vr^\vr$ and $\left (\Fb^{-1}\right )_\vr^\vr$ are inverses of one another on $\Vr\B$ and $\Vr\E$. These observations lead to the following notations:

\begin{nota}{Partial inverses}{part_inv}
	Let $\bundle{\A}{\AA}$ and $\bundle{\D}{\DD}$ be two affine bundles and $\Lb \in \lpa{\Tr\A}{\Tr\D}$ be an injective map with both shadows injective. The following notations are introduced:
	\algn{
		{\Lb_\hr^\hr}^{-1} &:= \left (\Lb^{-1}\right )_\hr^\hr
			&\hspace{5em}& {\Lb_\vr^\vr}^{-1} &:= \left (\Lb^{-1}\right )_\vr^\vr
	}
	called the \underline{pseudo-inverses} of $\Lb_\hr^\hr$ and $\Lb_\vr^\vr$ respectively.
\end{nota}

With these notations, $\Fb$ and $\Fb^{-1}$ take the following block-wise forms:
\algn{
	\Fb &\equiv \bmat{\Fb_\hr^\hr&\bold{0}\\\Fb_\hr^\vr&\Fb_\vr^\vr}
		&\hspace{5em}& \Fb^{-1} &\equiv \bmat{{\Fb_\hr^\hr}^{-1}&\bold{0}\\-{\Fb_\hr^\hr}^{-1}\cdot\Fb_\hr^\vr\cdot{\Fb_\vr^\vr}^{-1}&{\Fb_\vr^\vr}^{-1}}
}

The term pseudo-inverse comes from the fact that the products do not yield identities but projectors: $\Fb_\hr^\hr\cdot{\Fb_\hr^\hr}^{-1} = \hr_\gammab$, ${\Fb_\hr^\hr}^{-1}\cdot\Fb_\hr^\hr = \hr_{\Gammabref}$, etc. Therefore, these are actual inverses only on $\Hr^\gammab$, $\Hr^\Gammabref$, etc.

\subsection{Material geometry}
\label{sect:pull_back}

Having defined everything, the next step is to use the first-order placement map $\Fb$ to pull the ambient geometry back onto $\B$. Let $X \in \B$ and $\overline{\Ub} \in \Tr_\overline{X}\BB$. Then the placement map places ${X}$ at $x = \varphi\left (X\right ) \in \EE$ and $\overline{\Ub}$ at $\overline{\ub} = \overline{\Fb}\cdot\overline{\Ub} \in \Tr_\overline{x}\EE$. Using $\gammab$, one can lift this vector into a vector $\ub = \gammab_x\cdot\overline{\ub} \in \Hr^\gammab_x\E$. Using the injectivity of $\Fb_X$ one has that, if $\ub$ is in its image, there exist a unique $\Ub \in \Tr_X\B$ placed at $\ub$. This process defines a connection $\Gammab$ on $\B$, as stated in the following theorem:

\begin{theodef}{Material connection}{Gammab}
	Let $\Fb : \Tr\B \longto \Tr\E$ be a first-order placement map, $\varphi:\B\longto\E$ its associated punctual placement and $\gammab$ be a connection on $\E$. Then the following application, called the \underline{pull-back of $\gammab$ by $\Fb$}, is a well-defined affine connection on $\B$:
		$$\Gammab_X = \Fb^{-1}\cdot\gammab_{\varphi(X)}\cdot\overline{\Fb} \quad\quad\forall X \in \B$$
	Furthermore, its horizontal space $\Hr^\Gammab\B$ is the preimage of $\Hr^\gammab\E$:
		$$\Hr^\Gammab\B = \Fb^{-1}\left (\Hr^\gammab\E\right )$$
	and its horizontal projection is:
		$$\hr_\Gammab = \hr_\Gammabref - {\Fb_\vr^\vr}^{-1}\cdot\Fb_\hr^\vr$$
	or, equivalently:
		$$\Gammab = \left [\Id - {\Fb_\vr^\vr}^{-1}\cdot\Fb_\hr^\vr\right ]\cdot\Gammabref$$
One can use the notation $\Fb^*\gammab := \Gammab$ if the dependency on $\Fb$ needs to be made explicit\footnote{Once again $\ast$ denotes pull-backs, not to be confused with $\star$ for transposition.}.
\end{theodef}

\begin{proof}

	First, let $X \in \B$ and $\overline{\Ub} \in \Tr_\overline{X}\BB$. Since $\Fb \in \fibpres{\bundle[\Tr\pi_\B]{\Tr\B}{\Tr\BB}}{\bundle[\Tr\pi_\E]{\Tr\E}{\Tr\EE}}$, one has\footnote{%
		Recall that, for $\bundle\A\AA$ an affine bundle, $x \in \A$ and $\overline{\ub} \in \Tr_\overline{x}\AA$, the following notations are used
			$$\at{\Tr\A}_{\overline{\ub}} := \left [\Tr\pi_\A\right ]^{-1}\left (\left \{\overline{\ub}\right \}\right )\hspace{5em}\at{\Tr_x\A}_{\overline{\ub}} := \at{\Tr\A}_{\overline{\ub}} \cap \Tr_x\A$$%
		}%
		$$\Fb_X \cdot \at{\Tr_X\B}_{\overline{\Ub}} \subset  \at{\Tr_{\varphi(X)}\E}_{\overline{\Fb}\cdot\overline{\Ub}}$$
		and, since those are both vector spaces of dimension $3$ \partxt{thanks to the injectivity of $\Fb_X$}, the inclusion is in fact an equality. Since $\gammab$ is a connection on $\E$, one further has:
		$$\gammab_{\varphi(X)}\cdot\overline{\Fb}\cdot\overline{\Ub} \in \at{\Tr_{\varphi(X)}\E}_{\overline{\Fb}\cdot\overline{\Ub} }$$
		These relations imply that $\Gammab_X\cdot\overline{\Ub} := \Fb^{-1}\cdot\gammab_{\varphi(X)}\cdot\overline{\Fb}\cdot\overline{\Ub}$ is well defined and verifies
		$$\Gammab_X\cdot\overline{\Ub} \in \at{\Tr_X\B}_{\overline{\Ub} }$$
		meaning in particular that $\Tr\pi_\E \cdot \Gammab = \Id_{\Tr\BB}$.\\
		
		Secondly, the horizontal projection is given by:
		\algn{
			\hr_\Gammab :=& \Gammab\cdot\Tr\pi_\B\\
						=& \Fb^{-1}\cdot\gammab\cdot\overline{\Fb}\cdot\Tr\pi_\B\\
						=& \Fb^{-1}\cdot\gammab\cdot\Tr\pi_\E\cdot\Fb\\
						=& \Fb^{-1}\cdot\hr_\gammab\cdot\Fb\\
						=& \left (\Fb^{-1}\right )_\hr\cdot\Fb_\hr^\hr\\
						=& \left [{\Fb_\hr^\hr}^{-1}-{\Fb_\vr^\vr}^{-1}\cdot\Fb_\hr^\vr\cdot{\Fb_\hr^\hr}^{-1}\right ]\cdot\Fb_\hr^\hr\\
						=& \hr_\Gammabref-{\Fb_\vr^\vr}^{-1}\cdot\Fb_\hr^\vr
		}
		
		Furthermore, for $\Ub \in \Tr\B$, one has:
		\algn{
			\exists \ub \in \Tr\E,~\Fb\cdot\Ub = \hr_\gammab\cdot\ub%
						&\impliedby \exists \Ub' \in \Tr\B,~ &\Fb\cdot\Ub &= \hr_\gammab \cdot \Fb \cdot \Ub'\\
			\exists \ub \in \Tr\E,~\Fb\cdot\Ub = \hr_\gammab\cdot\ub%
						&\implies \exists \ub \in \Tr\E,~&\hr_\gammab \cdot \Fb \cdot \Ub &= \hr_\gammab\cdot\ub = \Fb\cdot \Ub\\
			\exists \ub \in \Tr\E,~\Fb\cdot\Ub = \hr_\gammab\cdot\ub%
						&\implies \exists \Ub' \in \Tr\B,~&\Fb\cdot\Ub &= \hr_\gammab \cdot \Fb \cdot \Ub'
		}		
		
		Meaning one has:
\algn{
	\Fb^{-1}\left (\setof{\hr_{\gammab}\cdot\ub}{\ub\in\Tr\E}\right ) &= \Fb^{-1}\left (\setof{\hr_{\gammab}\cdot\Fb\cdot\Ub'}{\Ub'\in\Tr\B}\right )
}		

		The horizontal space is then given by:
		\algn{
				\Hr^\Gammab\B
					&=& \setof{\hr_{\Gammab}\cdot\Ub}{\Ub\in\Tr\B}\\
					&=& \setof{\Fb^{-1}\cdot\hr_{\gammab}\cdot\Fb\cdot\Ub}{\Ub\in\Tr\B}\\
					&=& \Fb^{-1}\left (\setof{\hr_{\gammab}\cdot\Fb\cdot\Ub}{\Ub\in\Tr\B}\right )\\
					&=& \Fb^{-1}\left (\setof{\hr_{\gammab}\cdot\ub}{\ub\in\Tr\E}\right )\\
					&=& \Fb^{-1}\Hr^\gammab\E
		}
				
		Lastly, since $\Fb \in \lpa{\Tr\B}{\Tr\E}$, $\Fb^{-1} \in \lpa{\Tr\E}{\Tr\A}$ is affine in the punctual vertical coordinate, so is $\varphi$. Since $\gammab$ is affine, $\Gammab$ will be too.
\end{proof}

This process also works for the solder form and the pseudo-metric as stated by the following theorems:

\begin{theodef}{Material solder form}{Thetab}
	Let $\Fb : \Tr\B \longto \Tr\E$ be a first-order placement map, $\varphi : \B \longto \E$ its associated punctual placement map and $\varthetab$ a solder form on $\E$. Then the following application, called the \underline{pull-back of $\varthetab$ by $\Fb$}, is a well-defined solder form on $\B$:
		$$\Thetab_X = \Fb^{-1}\cdot\varthetab_{\varphi(X)}\cdot\overline{\Fb}\quad\quad\forall X \in \B$$
One can use the notation $\Fb^*\varthetab := \Thetab$ if the dependency on $\Fb$ needs to be made explicit\footnote{Once again $\ast$ denotes pull-backs, not to be confused with $\star$ for transposition.}.
\end{theodef}

\begin{proof}
	Using the same argument as in the proof of \cref{def:Gammab} one has
		\algn{
			\Fb\cdot\Vr\B &= \Fb\cdot\at{\Tr\B}_\bold{0}\\
						  &= \at{\Tr\E}_{\overline{\Fb}\cdot\bold{0}}\\
						  &= \Vr\E
			}
	where restrictions are for the macroscopic projections \partxt{$\Tr\pi_\B$ and $\Tr\pi_\E$} and $\bold0 \subset \Tr\BB$ is the zero section of $\Tr\BB$. This means that $\Thetab_X = \Fb^{-1}\cdot\varthetab_{\varphi(X)}\cdot\overline{\Fb}_X$ is well defined. Then one has, for $X \in \B$:
	\algn{
		\Tr\pi_{\B}\cdot\Thetab_X &= \Tr\pi_{\B}\cdot\Fb^{-1}\cdot\varthetab_{\varphi(X)}\cdot\overline{\Fb}_X\\
								  &= \ov{\Fb}^{-1}\cdot\Tr\pi_{\E}\cdot\varthetab_{\varphi(X)}\cdot\overline{\Fb}_X\\
								  &= \bold0
		}
	Meaning that $\Thetab$ is a solder-form. Lastly, $\Thetab$ is affine for the same reason $\Gammab$ is in \cref{def:Gammab}.
\end{proof}

\begin{rema}
	\label{rem:theta_holo}
	The third condition on $\Fb$ stated in \cref{def:lpa} \partxt{or \cref{eq:Fvv=Tphivv}}, namely that it must be a gradient on the micro-fibres, is equivalent to requiring $\Fb^*\varthetab = \left (\Tr\varphi\right )^*\varthetab$. Roughly speaking, this is because this equality looks like ${\Fb_\vr^\vr}^{-1}\cdot\varthetab\cdot\ov\Fb = {\left (\Tr\varphi\right )_\vr^\vr}^{-1}\cdot\varthetab\cdot\Tr\ov\varphi$ and the second condition of \cref{def:lpa} \partxt{or \cref{eq:ovF=Tovphi}} already states that $\ov\Fb = \Tr\ov\varphi$.
\end{rema}

\begin{theodef}{Material pseudo-metric}{Gf}
	Let $\Fb : \Tr\B \longto \Tr\E$ be a first-order placement map and $\gf : \Tr\E \longto \Tr^\star\E$ be a pseudo-metric on $\E$. Then the following application, called the \underline{pull-back of $\gf$ by $\Fb$}, is a well-defined pseudo-metric on $\B$:
		$$\Gf = \Fb^\star\cdot\gf\cdot\Fb$$
	Similarly, if $\gb : \Tr\EE \longto \Tr^\star\EE$ is a metric on $\EE$, the following pull-back is a well-defined metric on $\BB$:
		$$\Gb = \overline{\Fb}^\star\cdot\gb\cdot\overline{\Fb}$$
	and corresponds to the classical notion of the right Cauchy-Green tensor.\\
	
One can use the notations $\Fb^*\gf := \Gf$ and ${\ov\Fb}^*\gb := \Gb$ if the dependencies on $\Fb$ and $\ov\Fb$ need to be made explicit\footnote{Once again $\ast$ denotes pull-backs, not to be confused with $\star$ for transposition.}.
\end{theodef}

\begin{proof}
	Let $X \in \B$ and $\left (\Ub, \Wb\right ) \in \Tr_X\B\times\Tr_X\B$. Recall that one has:
		${\braket\Ub\Wb}_\Gf := {\braket{\Fb\cdot\Ub}{\Fb\cdot\Wb}}_\gf$
	Symmetry and non-negativity are then automatically verified since $\gf$ verifies it. The bi-linearity on the other hand comes from the linearity of $\Fb$. A similar statement holds for $\Gb$. The definiteness of $\Gb$ then comes from the injectivity of $\ov\Fb$ and the definiteness of $\gb$ as $\Gb\cdot\ov\Ub = 0 \implies \gb\cdot\ov\Fb\cdot\ov\Ub = 0 \implies \ov\Fb\cdot\ov\Ub=0 \implies \ov\Ub = 0$.
\end{proof}

Two key facts regarding the metrics and the connection are two be highlighted:
\begin{enumerate}
	\it The metrics $\Gb : \Tr\BB \longto \Tr^\star\BB$ and $\displaystyle\at{\Gf_\vr^\vr}_\ov X : \Vr\B_\ov X \longto \Vr\strut^\star\B_\ov X$, $\ov X \in \BB$ will always be curvature-free.
	\it The connection $\Gammab$ will, in general, not be the Levi-Civita connection associated to $\Gb$. In particular, it may possess some torsion and/or curvature.
\end{enumerate}

An important consequence is that, contrary to several models where the metric carries the curvature, here the curvature is carried by the (independent) connection $\Gammab$. More generally, $\Gb$ will be the Cauchy-Green tensor of classical mechanics of continuum media while $\Gammab$ and $\Thetab$ will be the one carrying the micro-structure data.\\

Regarding the value of $\Gf$, a thorough derivation of the computation leads to the following formulae:

\begin{lemm}
\label{lemm:Gf}
Let $\gf : \Tr\E \longto \Tr^\star\E$ be a pseudo-metric and $\Gf : \Tr\B \longto \Tr^\star\B$ its pull-back by $\Fb \in \lpa{\Tr\B}{\Tr\E}$. Then, one has:

\algn{
	\Gf_\hr^\hr &= \left (\Fb_\hr\right )^\star\cdot\gf\cdot\Fb_\hr &\hspace{2em}%
							 \Gf_\vr^\hr &= \left (\Fb_\hr\right )^\star\cdot\gf\cdot\Fb_\vr\\
				&= {\Fb_\hr^\hr}^\star\gf_\hr^\hr\Fb_\hr^\hr%
					+ {\Fb_\hr^\hr}^\star\gf_\vr^\hr\Fb_\hr^\vr%
					+ {\Fb_\hr^\vr}^\star\gf_\hr^\vr\Fb_\hr^\hr%
					+ {\Fb_\hr^\vr}^\star\gf_\vr^\vr\Fb_\hr^\vr&%
										 &= {\Gf_\hr^\vr}^\star\\
	\\
	\Gf_\hr^\vr &= \left (\Fb_\vr\right )^\star\cdot\gf\cdot\Fb_\hr &\hspace{2em}
							 \Gf_\vr^\vr &= \left (\Fb_\vr\right )^\star\cdot\gf\cdot\Fb_\vr\\
				&= {\Fb_\vr^\vr}^\star\gf_\hr^\vr\Fb_\hr^\hr%
					+ {\Fb_\vr^\vr}^\star\gf_\vr^\vr\Fb_\hr^\vr&%
										&= {\Fb_\vr^\vr}^\star\gf_\vr^\vr\Fb_\vr^\vr
}
\end{lemm}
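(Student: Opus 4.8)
The plan is to unfold the definition $\Gf = \Fb^\star\cdot\gf\cdot\Fb$ from \cref{def:Gf} using the block-wise decomposition of \cref{nota:blck_dec}, with the blocks of $\Gf$ taken for the reference connection $\Gammabref$ on $\B$ and those of $\gf$ and $\Fb$ for the Levi-Civita connection $\gammab$ on the ambient side. The four compact identities then follow by pure bookkeeping of transposes. For instance, the block notation sets $\Gf_\hr^\vr = \vr_{\Gammabref}^\star\cdot\Gf\cdot\hr_{\Gammabref}$, so that substituting $\Gf = \Fb^\star\cdot\gf\cdot\Fb$ and regrouping via $\vr_{\Gammabref}^\star\cdot\Fb^\star = \left(\Fb\cdot\vr_{\Gammabref}\right)^\star = \left(\Fb_\vr\right)^\star$ together with $\Fb\cdot\hr_{\Gammabref} = \Fb_\hr$ yields $\Gf_\hr^\vr = \left(\Fb_\vr\right)^\star\cdot\gf\cdot\Fb_\hr$. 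The remaining three compact forms are obtained identically by selecting the appropriate pair of source projectors, and the symmetry $\Gf_\vr^\hr = {\Gf_\hr^\vr}^\star$ is read off by transposing this expression and invoking the symmetry $\gf^\star = \gf$ of the pseudo-metric.

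For the expanded formulae I would first invoke \cref{lemm:fvh_eq_zero}, which gives $\Fb_\vr^\hr = \bold0$. Decomposing the image of $\Fb_\hr = \Fb\cdot\hr_{\Gammabref}$ along $\Tr\E = \Hr^\gammab\E\oplus\Vr\E$ then gives $\Fb_\hr = \Fb_\hr^\hr + \Fb_\hr^\vr$, while the vanishing of the $\vr\hr$ block collapses $\Fb_\vr = \Fb\cdot\vr_{\Gammabref}$ to its single vertical component $\Fb_\vr^\vr$. Substituting these into the compact forms gives $\Gf_\hr^\hr = \left(\Fb_\hr^\hr + \Fb_\hr^\vr\right)^\star\cdot\gf\cdot\left(\Fb_\hr^\hr + \Fb_\hr^\vr\right)$, as well as $\Gf_\hr^\vr = \left(\Fb_\vr^\vr\right)^\star\cdot\gf\cdot\left(\Fb_\hr^\hr + \Fb_\hr^\vr\right)$ and $\Gf_\vr^\vr = \left(\Fb_\vr^\vr\right)^\star\cdot\gf\cdot\Fb_\vr^\vr$, which expand bilinearly into four, two and one cross terms respectively.

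The only genuine step is to insert the ambient projectors into each cross term. The key observation is that, by construction, $\Fb_\hr^\hr$ takes its values in $\Hr^\gammab\E$ while $\Fb_\vr^\vr$ and $\Fb_\hr^\vr$ take theirs in $\Vr\E$, so idempotence of the projectors gives $\Fb_\hr^\hr = \hr_\gammab\cdot\Fb_\hr^\hr$, $\Fb_\vr^\vr = \vr_\gammab\cdot\Fb_\vr^\vr$ and $\Fb_\hr^\vr = \vr_\gammab\cdot\Fb_\hr^\vr$. Inserting one such identity on each side of $\gf$ reconstitutes exactly the projectors defining the ambient blocks of $\gf$; for instance $\left(\Fb_\hr^\hr\right)^\star\cdot\gf\cdot\Fb_\hr^\vr = \left(\Fb_\hr^\hr\right)^\star\cdot\hr_\gammab^\star\cdot\gf\cdot\vr_\gammab\cdot\Fb_\hr^\vr = \left(\Fb_\hr^\hr\right)^\star\cdot\gf_\vr^\hr\cdot\Fb_\hr^\vr$. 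Treating each cross term this way collects the stated expansion of $\Gf_\hr^\hr$, and the identical procedure applied to the terms of $\Gf_\hr^\vr$ and $\Gf_\vr^\vr$ produces the remaining two expansions.

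I do not expect a real obstacle, as the argument is entirely formal. The only point demanding care is the bookkeeping: keeping the source projectors $\hr_{\Gammabref}, \vr_{\Gammabref}$ on $\B$ distinct from the image projectors $\hr_\gammab, \vr_\gammab$ on $\E$, and placing the transposes correctly, since $\gf$, $\Gf$ and their blocks are cotangent-valued and the upper index of a block therefore carries a transposed projector.
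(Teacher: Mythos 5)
Your proof is correct, and it supplies precisely the computation the paper leaves implicit: the paper gives no proof of \cref{lemm:Gf}, saying only that \say{a thorough derivation of the computation leads to the following formulae}. Your route — unfolding $\Gf = \Fb^\star\cdot\gf\cdot\Fb$ through the block-wise notation of \cref{nota:blck_dec}, invoking \cref{lemm:fvh_eq_zero} to kill $\Fb_\vr^\hr$, and inserting the idempotent ambient projectors $\hr_\gammab$, $\vr_\gammab$ on either side of $\gf$ to reconstitute its blocks — is exactly the intended derivation, with the transposes and the distinction between source projectors \partxt{$\hr_{\Gammabref}$, $\vr_{\Gammabref}$} and image projectors \partxt{$\hr_\gammab$, $\vr_\gammab$} handled correctly.
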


\subsubsection*{On the compatibility of the pull-backs}

By use of these theorems one now has a way of pulling the geometrical tools from the ambient space to the material space using a given first order placement map. Nevertheless, these objects had certain relations. In particular, the pseudo-metric was compatible. These properties are preserved by the pull-back operation:

\begin{lemm}
	\label{lem:Gf_compat}
	Let $\Fb : \Tr\B \longto \Tr\E$ be a first-order placement map, $\gammab$ a connection on $\E$, $\varthetab$ a solder form on $\E$, $\gb$ a metric on $\EE$ and $\gf$ a compatible pseudo-metric on $\E$. Let $\Gammab$, $\Thetab$, $\Gb$ and $\Gf$ be their respective pull-backs by $\Fb$. Then:
	\begin{enumerate}
		\it $\Gf$ is compatible with $\Gammab$, $\Thetab$ and $\Gb$
		\it $\ker\Gf = \Hr^{\Gammab-\Thetab}\B$
	\end{enumerate}
\end{lemm}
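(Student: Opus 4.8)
The engine of both statements is the pair of fibre-wise intertwining identities $\Fb\cdot\Gammab = \gammab\cdot\ov\Fb$ and $\Fb\cdot\Thetab = \varthetab\cdot\ov\Fb$ (at each $X\in\B$). I would read them off the definitions $\Gammab_X = \Fb^{-1}\cdot\gammab_{\varphi(X)}\cdot\ov\Fb$ and $\Thetab_X = \Fb^{-1}\cdot\varthetab_{\varphi(X)}\cdot\ov\Fb$ of \cref{def:Gammab} and \cref{def:Thetab}: left-composing with $\Fb$ returns $\gammab_{\varphi(X)}\cdot\ov\Fb$ and $\varthetab_{\varphi(X)}\cdot\ov\Fb$, because the proofs of those theorems already establish that $\gammab_{\varphi(X)}\cdot\ov\Fb\cdot\ov\Ub \in \Fb_X\left(\at{\Tr_X\B}_{\ov\Ub}\right)$ and $\Imr\left(\varthetab_{\varphi(X)}\cdot\ov\Fb\right)\subseteq\Vr\E=\Fb\cdot\Vr\B$ both sit inside $\Imr\Fb$, on which $\Fb\cdot\Fb^{-1}$ is the identity.

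For the first item I would substitute directly. Fixing $X$ and $\left(\ov\Ub, \ov\Wb, \ov\Ub', \ov\Wb'\right) \in \left(\Tr_{\ov X}\BB\right)^4$, the identities give $\Fb_X\cdot\left(\Thetab_X\cdot\ov\Ub + \Gammab_X\cdot\ov\Wb\right) = \varthetab_{\varphi(X)}\cdot\ov\Fb\cdot\ov\Ub + \gammab_{\varphi(X)}\cdot\ov\Fb\cdot\ov\Wb$, and likewise for the primed pair. Since ${\braket\Ub\Wb}_\Gf = {\braket{\Fb\cdot\Ub}{\Fb\cdot\Wb}}_\gf$ by \cref{def:Gf}, the left-hand side of the compatibility relation of \cref{def:compab} for $\Gf$ becomes the $\gf$-pairing of $\varthetab_{\varphi(X)}\cdot\ov\Fb\cdot\ov\Ub + \gammab_{\varphi(X)}\cdot\ov\Fb\cdot\ov\Wb$ with its primed counterpart; the compatibility of $\gf$ (\cref{def:compab}) applied with the four ambient inputs $\ov\Fb\cdot\ov\Ub, \ov\Fb\cdot\ov\Wb, \ov\Fb\cdot\ov\Ub', \ov\Fb\cdot\ov\Wb'$ collapses this to ${\braket{\ov\Fb\cdot\left(\ov\Ub+\ov\Wb\right)}{\ov\Fb\cdot\left(\ov\Ub'+\ov\Wb'\right)}}_\gb$, which is exactly ${\braket{\ov\Ub+\ov\Wb}{\ov\Ub'+\ov\Wb'}}_\Gb$ by $\Gb = \ov\Fb^\star\cdot\gb\cdot\ov\Fb$. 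I expect no difficulty in this item.

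For the second item the naive approach $-$ invoking \cref{lemm:kergf} for the now-compatible $\Gf$ $-$ only yields the inclusion $\Hr^{\Gammab-\Thetab}\B \subseteq \ker\Gf$ together with the bound $\dim\BB \leq \dim\left(\ker_X\Gf\right) \leq \dim\left(\F_\B\right) = 3$; the equality clause of that lemma needs $\dim\BB = \dim\left(\F_\B\right)$, which fails for beams and shells. This dimension mismatch is the main obstacle, and I would bypass it by transporting the kernel to the ambient space, where the dimensions do agree. Since $\Gf = \Fb^\star\cdot\gf\cdot\Fb$, the induced semi-norms obey $\left\|\Ub\right\|_\Gf = \left\|\Fb\cdot\Ub\right\|_\gf$, so the semi-norm description of the kernel (\cref{def:kern}) gives $\ker\Gf = \Fb^{-1}\left(\ker\gf\right)$ as a preimage. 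In $\E$ one has $\dim\EE = \dim\left(\F_\E\right) = 3$, so \cref{lemm:kergf} yields $\ker\gf = \Hr^{\gammab-\varthetab}\E$. Finally, $\gammab-\varthetab$ is again a connection on $\E$ $-$ its projection is $\Tr\pi_\E\cdot\left(\gammab-\varthetab\right) = \Id - \bold0 = \Id$ $-$ whose pull-back is $\Fb^{-1}\cdot\left(\gammab-\varthetab\right)\cdot\ov\Fb = \Gammab - \Thetab$, so \cref{def:Gammab} gives $\Hr^{\Gammab-\Thetab}\B = \Fb^{-1}\left(\Hr^{\gammab-\varthetab}\E\right)$ with the same preimage convention. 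Chaining the three equalities, $\ker\Gf = \Fb^{-1}\left(\ker\gf\right) = \Fb^{-1}\left(\Hr^{\gammab-\varthetab}\E\right) = \Hr^{\Gammab-\Thetab}\B$, which closes the argument in all macroscopic dimensions.
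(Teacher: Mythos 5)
Your proof is correct and takes essentially the same route as the paper's: item one is the same direct substitution through the pull-back definitions of $\Gf$, $\Gammab$, $\Thetab$, $\Gb$ followed by the compatibility of $\gf$ applied to the ambient inputs $\ov\Fb\cdot\ov\Ub$, etc., and item two is the same chain $\ker\Gf = \Fb^{-1}\left(\ker\gf\right) = \Fb^{-1}\left(\Hr^{\gammab-\varthetab}\E\right) = \Hr^{\Gammab-\Thetab}\B$ obtained by applying \cref{lemm:kergf} in the ambient space (where $\dim\EE = \dim\left(\F_\E\right) = 3$ gives equality) and the preimage property of \cref{def:Gammab}. Your explicit observations — that the naive application of \cref{lemm:kergf} on $\B$ only gives an inclusion when $\dim\BB < 3$, and that the intertwining identities $\Fb\cdot\Gammab = \gammab\cdot\ov\Fb$, $\Fb\cdot\Thetab = \varthetab\cdot\ov\Fb$ are legitimate because the relevant images lie in $\Imr\left(\Fb\right)$ — are points the paper leaves implicit.
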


\begin{proof}
	Let $X \in \B$ and let $\ov\Ub$, $\ov\Wb$, $\ov{\Ub}'$, $\ov{\Wb}'$ be vectors in $\Tr_\ov{X}\BB$. Then one has:
	
	{
		\algn{
			{\braket{\Thetab_X\cdot\ov{\Ub} + \Gammab_X\cdot\ov{\Wb}}{\Thetab_X\cdot{\ov{\Ub}}' + \Gammab_X\cdot{\ov{\Wb}}'}}_\Gf
				\hspace{-7em}&\\
				&= {\braket{\Fb\cdot\Thetab_X\cdot\ov{\Ub} + \Fb\cdot\Gammab_X\cdot\ov{\Wb}}{\Fb\cdot\Thetab_X\cdot{\ov{\Ub}}' + \Fb\cdot\Gammab_X\cdot{\ov{\Wb}}'}}_\gf\\
				&= {\braket{\varthetab_{\varphi(X)}\cdot\ov\Fb\cdot\ov{\Ub} + \gammab_{\varphi(X)}\cdot\ov\Fb\cdot\ov{\Wb}}{\varthetab_{\varphi(X)}\cdot\ov\Fb\cdot{\ov{\Ub}}' + \gammab_{\varphi(X)}\cdot\ov\Fb\cdot{\ov{\Wb}}'}}_\gf\\
				&= {\braket{\ov\Fb\cdot\ov{\Ub} + \ov\Fb\cdot\ov{\Wb}}{\ov\Fb\cdot{\ov{\Ub}}' + \ov\Fb\cdot{\ov{\Wb}}'}}_\gb\\
				&= {\braket{\ov{\Ub} + \ov{\Wb}}{{\ov{\Ub}}' + {\ov{\Wb}}'}}_\Gb
		}
	}
	showing the compatibility condition. For the kernel, one has using \cref{lemm:kergf}, $\ker\gf = \Hr^{\gammab-\varthetab}\E$. Furthermore, one has :
	\algn{
		{\braket{\Ub}{\Ub}}_\Gf = 0 &\iff {\braket{\Fb\cdot\Ub}{\Fb\cdot\Ub}}_\gf = 0\\
									&\iff \Fb\cdot\Ub \in \Hr^{\gammab-\varthetab}\E\\
									&\iff \Ub \in \Fb^{-1}\left (\Hr^{\gammab-\varthetab}\E\right )\\
									&\iff \Ub \in \Hr^{\Gammab-\Thetab}\B
	}
	where the equality $\Fb^{-1}\left (\Hr^{\gammab-\varthetab}\E\right ) = \Hr^{\Gammab-\Thetab}\B$ comes from \cref{def:Gammab} and the fact that $\Gammab-\Thetab$ is the pull-back of $\gammab-\varthetab$ by $\Fb$ (by linearity).
\end{proof}

It is important to notice that $\dim\left (\F_\B\right )$ may differ from $\dim\left (\BB\right )$, in which case \cref{th:unis_gf} does not apply and $\Gf$ is not the unique pseudo-metric compatible with $\Gb$, $\Thetab$ and $\Gammab$.

\section{Frame invariance}

\label{frame_inv_intro}

Having the material geometry fully defined, the next step is to be able to define some properties of the material, with the later goal of formulating some energies. In order to do so, a key concept in classical mechanics is the notion of frame invariance. A property of a material is then a frame invariant function of the current state of the material. In this paper the state will be the generalised first-order placement map $\Fb$. This section aims at generalising this notion of frame invariance from $\EE$ to $\E$.

\subsection{Generalised Galilean group}

In classical mechanics, frame invariance refers to the invariance of a function in the event of a change of Galilean frame. These changes of frame are given by the set of rigid transformations, i.e. bijective affine transformations that preserve the metric $\gb$. Those transformations, composed of a rotation and a translation part, form a group $\Gal$ called the Galilean group defined as:

\byalgn{
	\Gal :=& \setof{\bmat{\Rb\\\ov\tb} : \fun\EE\EE{\ov\xb}{\Rb\cdot\ov\xb + \ov\tb}}{\Rb \in \O(3), \ov\tb \in \Tr_\ov\xb\EE}
}{
	\Tr\Gal :=& \setof{\Tr\Of}{\Of \in \Gal}
}

In practice, in the classical case, the state of the system is often entirely depicted by its first order placement map $\ov \Fb = \Tr\ov\varphi$. In this case, a frame invariant function is a function of $\ov \Fb$ invariant under the action of $\Tr\Gal$ on $\ov \Fb$. The later action being the gradient of the left action of $\Gal$ on the punctual placement map $\ov\varphi$. One therefore seeks to generalise $\Tr\Gal$ into a group acting on the whole space $\Tr\E$, and therefore on $\Fb$.\\

Assume a generalised first order Galilean transformation $\Ab : \Tr\E \longto \Tr\E$ is given. Not knowing much about what such a generalisation may look like, one can still require it to be, at least, physically acceptable. That is, $\Ab \in \lpa{\Tr\E}{\Tr\E}$. Furthermore, one wants such a transformation not to alter the state of the system, adding neither defects nor misalignments. In particular, this means that $\Ab$ must transform vectors and points through a similar process. Namely, $\Ab = \Tr\ab$ for some $\ab : \E\longto \E$. Under this condition, the requirement $\Ab \in \lpa{\Tr\E}{\Tr\E}$ becomes equivalent to $\ab \in \aff\E\E$. \\

Recall the projection of interpretation defined in \cref{th:unis_gf}:
\algn{
	\interproj ~~:=&~~ \Tr\pi_\E + \varthetab^{-1}\cdot\vr_\gammab
				:& \fun{\Tr\E}{\Tr\EE}{\bmat{\bmat{\ov x\\y}\\\bmat{\delta \ov x\\\delta y}}}{\bmat{\ov x\\\delta\ov x + \delta y}}
}

The macroscopic projection $\Tr\pi_\E$ maps a vector at $x \in \E$ to the vector at $\ov x$ one would be able to measure, keeping only the macroscopic part. The projection of interpretation $\interproj$ on the other hand, maps a vector at $x \in \E$ to the vector at $\ov x$ it physically represents, its so-called interpretation. That is, it also includes the microscopic part of the original vector, scaled down by $\varthetab^{-1}$.\\

Although one would only see the macroscopic part of a vector, reality also cares about its interpretation. In fact, frame invariance should also apply to it. A generalised Galilean transformation $\Ab : \Tr\E\longto\Tr\E$ should physically represent a Galilean transformation of the total space. Therefore, it must transform interpretations in a Galilean manner. That is:

\quant{\exists \bmat{\Rb\\\ov\tb}\in\Gal,}{\interproj\circ\Ab &=& \Tr\bmat{\Rb\\\ov\tb}\circ\interproj\\
															  &=& \Tr\Rb\circ\interproj + \Tr\ov\tb}

the interpretation of the image is the interpretation of the pre-image, up to a (first-order) Galilean transformation. By definition, this is equivalent to saying:
\algn{
	\Ab \in& \fibpres[\Tr\Gal]{\bundle[\interproj]{\Tr\E}{\Tr\EE}}{\bundle[\interproj]{\Tr\E}{\Tr\EE}}
}

Notice the $\Tr\Gal$ as an index, stating that the shadow for the projection structure $\interproj$ is Galilean. Based on this analysis, the generalised first-order Galilean group is defined as follows:\\

\begin{defi}{Generalised Galilean group}{galf}
	The \underline{generalised (first-order) Galilean group} is the group, noted $\Tr\Galf$, defined as follows:
\algn{
	\Tr\Galf =& \Tr\aff\E\E \cap \fibpres[\Tr\Gal]{\bundle[\interproj]{\Tr\E}{\Tr\EE}}{\bundle[\interproj]{\Tr\E}{\Tr\EE}}\\
			 =& \setof{\Tr\ab}{\ab \in \aff\E\E,~\exists\Of\in\Tr\Gal,~\interproj\circ\Tr\ab = \Of\circ\interproj}\\
			 \subset& \lpa{\Tr\E}{\Tr\E}
}
\end{defi}

In the holonomic coordinates of $\E$, elements of $\Tr\Galf$ take a particular form, as stated in the following lemma:\\

\begin{lemm}
	\label{lemm:form_TGalf}
	Let $\Ab : \Tr\E \longto \Tr\E$. Then $\Ab \in \Tr\Galf$ if and only if there exist $\bmat{\Rb\\\ov\tb} \in \Gal$ and $\bmat{\Rb\\\tb^\vr}\in\Gal$ with the same rotational part such that, in the holonomic coordinates of $\Tr\E$, $\Ab$ takes the following form:
	\algn{
		\Ab :& \fun{\Tr\E}{\Tr\E}%
			{\bmat{	\bmat{\ov x\\ y}\\	\bmat{\delta\ov x\\\delta y}}}%
			{\bmat{	\bmat{\Rb\cdot\ov x + \ov \tb\\ \Rb\cdot y+\tb^\vr}\\	\bmat{\Rb\cdot\delta\ov x\\\Rb\cdot\delta y}}}
	}
\end{lemm}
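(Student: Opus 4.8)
The plan is to work entirely in the holonomic coordinates of $\E$ and turn the defining condition $\interproj\circ\Tr\ab = \Of\circ\interproj$ into an identity between two explicit coordinate expressions, from which pointwise constraints follow by matching coefficients in the free variables.

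First I would write the general shape of the shadow. By \cref{def:galf}, $\Ab\in\Tr\Galf$ forces $\Ab=\Tr\ab$ with $\ab\in\aff\E\E$, so in holonomic coordinates $\ab:\bmat{\ov x\\y}\mapsto\bmat{\ov\ab(\ov x)\\\ab^\vr(\ov x)\cdot y+t^\vr(\ov x)}$ with $\ov\ab$ smooth, $\ab^\vr(\ov x)$ linear and $t^\vr(\ov x)$ a translation. Differentiating yields $\Ab=\Tr\ab$ explicitly; the only delicate contribution is the cross term $\left [\dr\ab^\vr(\ov x)\cdot\delta\ov x\right ]\cdot y$ produced by the $\ov x$-dependence of the fibre-linear part $\ab^\vr$.

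Then I would compute both sides of the defining equation. Since $\interproj\equiv\bmat{\Id&\Id}$ sends a vector to $\bmat{\ov x\\\delta\ov x+\delta y}$, and $\Of=\Tr\bmat{\Rb\\\ov\tb}$ for some $\bmat{\Rb\\\ov\tb}\in\Gal$ sends $\bmat{\ov x\\\delta\ov x}$ to $\bmat{\Rb\cdot\ov x+\ov\tb\\\Rb\cdot\delta\ov x}$, the identity splits into a point-part and a vector-part equality. The point parts give $\ov\ab(\ov x)=\Rb\cdot\ov x+\ov\tb$, so $\ov\ab=\bmat{\Rb\\\ov\tb}$ and its differential $\dr\ov\ab$ is constant, equal to $\Rb$. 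Substituting this, the vector parts reduce, after the matched $\Rb\cdot\delta\ov x$ cancels, to the requirement that $\left [\dr\ab^\vr(\ov x)\cdot\delta\ov x\right ]\cdot y+\ab^\vr(\ov x)\cdot\delta y+\dr t^\vr(\ov x)\cdot\delta\ov x=\Rb\cdot\delta y$ hold for all $(y,\delta\ov x,\delta y)$.

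The crux is the resulting coefficient comparison in the independent variables. Reading off the $\delta y$-coefficient gives $\ab^\vr(\ov x)=\Rb$ for every $\ov x$, hence $\dr\ab^\vr$ vanishes and the cross term disappears identically; the remaining $\delta\ov x$-coefficient then gives that $\dr t^\vr$ vanishes, so $t^\vr$ is a constant $\tb^\vr$. This is precisely the claimed coordinate form, with the second element $\bmat{\Rb\\\tb^\vr}$ automatically lying in $\Gal$ since its rotation is the same $\Rb\in\O(3)$. For the converse I would simply observe that the displayed $\Ab$ equals $\Tr\ab$ for the affine bundle morphism $\ab:\bmat{\ov x\\y}\mapsto\bmat{\Rb\cdot\ov x+\ov\tb\\\Rb\cdot y+\tb^\vr}$ and check by substitution that $\interproj\circ\Ab=\Tr\bmat{\Rb\\\ov\tb}\circ\interproj$, so $\Ab\in\Tr\Galf$. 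I expect the only genuine obstacle to be the careful differentiation producing $\Tr\ab$ together with the justification that matching two smooth expressions for all inputs licenses the coefficient-by-coefficient reading; everything else is routine substitution.
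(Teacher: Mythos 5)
Your proposal is correct and follows essentially the same route as the paper's proof: both express $\ab$, $\Ab=\Tr\ab$, $\interproj\circ\Ab$ and $\Of\circ\interproj$ in holonomic coordinates and identify monomials in the free variables $\left (y,\delta\ov x,\delta y\right )$ to force $\ab^\vr \equiv \Rb$, constant macroscopic gradient, and ${\tb^\vr}'\equiv\bold 0$, then verify the converse by direct substitution. The only difference is the immaterial order in which the coefficients are read off (the paper extracts $\ab^\vr=\Rb$ from the $\delta y$ term first, you start from the point part).
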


\begin{proof}
	Assume $\Ab \in \Tr\Galf$, then $\Ab = \Tr\ab$ with $\ab \in \aff\E\E$ and $\interproj\circ\Ab=\Of\circ\interproj$ with $\Of \in \Tr\Gal$. In the holonomic coordinates of $\E$ and $\Tr\E$ one therefore has:\\
	
	\byalgn{
		\ab :& \fun{\E}{\E}%
			{\bmat{\ov x\\ y}}%
			{\bmat{\ov\ab(\ov x)\\\ab^\vr(\ov x)\cdot y+\tb^\vr(\ov x)}}\\
		\interproj :& \fun{\Tr\E}{\Tr\EE}%
			{\bmat{	\bmat{\ov x\\y}\\		\bmat{\delta \ov x\\\delta y}}}%
			{\bmat{\ov x\\\delta \ov x + \delta y}}\\
	}{
		\Of :& \fun{\Tr\EE}{\Tr\EE}%
			{\bmat{\ov x\\\delta \ov x}}%
			{\bmat{\Rb\cdot\ov x + \ov \tb\\\Rb\cdot\delta \ov x}}\\
		\Of \circ \interproj :& \fun{\Tr\E}{\Tr\EE}%
			{\bmat{	\bmat{\ov x\\y}\\		\bmat{\delta \ov x\\\delta y}}}%
			{\bmat{\Rb\cdot\ov x+\ov\tb \\ \Rb\cdot\delta \ov x + \Rb\cdot\delta y}}
	}
	\algn{
		\Ab = \Tr\ab :&\fun{\Tr\E}{\Tr\E}%
			{\bmat{	\bmat{\ov x\\y}\\		\bmat{\delta \ov x\\\delta y}}}%
			{\bmat{	\bmat{\ov\ab(\ov x)\\\ab^\vr(\ov x)\cdot y+\tb^\vr(\ov x)}\\
					\bmat{\ov{\ab}'(\ov x)\cdot\delta \ov x\\
							\left ({\ab^\vr}'(\ov x)\cdot y + {\tb^\vr}'(\ov x)\right )\cdot\delta \ov x%
								+ \ab^\vr(\ov x)\cdot \delta y}}}\\
		\interproj\circ\Ab :& \fun{\Tr\E}{\Tr\EE}%
			{\bmat{	\bmat{\ov x\\y}\\		\bmat{\delta \ov x\\\delta y}}}%
			{\bmat{	\ov\ab(\ov x)\\
					\left (\ov{\ab}'(\ov x) + {\ab^\vr}'(\ov x)\cdot y + {\tb^\vr}'(\ov x)\right )\cdot\delta \ov x%
						+ \ab^\vr(\ov x)\cdot \delta y}}\\
	}
	
	Where primes denote differentiation with respect to the unique argument. Identification of monomials then successively yields:
	\begin{enumerate}
		\it $\ab^\vr(\ov x) = \Rb$ hence ${\ab^\vr}'(\ov x) = 0$
		\it $\ov\ab(\ov x) = \Rb\cdot\ov x + \ov \tb$ hence ${\ov\ab}'(\ov x) = \Rb$
		\it ${\ov\ab}'(\ov x) + {\tb^\vr}'(\ov x) = \Rb$ hence ${\tb^\vr}'(\ov x) = 0$ (using last point)
	\end{enumerate}
	This means $\Ab$ takes the stated form. Reciprocally, if $\Ab$ has the stated form then $\interproj\circ\Ab=\Tr\bmat{\Rb\\\ov\tb}\circ\interproj$ and $\Ab \in \Tr\Galf$.
\end{proof}

This result shows that a generalised Galilean transformation is, in a way, the composition of two "coupled" classical Galilean transformation, one macroscopic and one microscopic. 

\subsection{\texorpdfstring{$\Tr\Galf$}{The generalised Galilean group} as a stabiliser}

In the classical case, the Galilean group $\Gal$ can be described as the group of affine transformations preserving the Euclidean metric $\gb$. That is, a stabiliser of $\gb$ for the pull-back action. One may therefore wonder how the generalised Galilean group $\Tr\Galf$ compares to the set of physically acceptable transformations preserving the pseudo-metric.\\

Let $\Ab \in \lpa{\Tr\E}{\Tr\E}$ be over $\ab : \E \longto \E$. Preserving $\gf$ implies, in particular, preserving $\gf_\vr^\vr$ and $\gf_\hr^\vr$. Using \cref{lemm:Gf} one therefore respectively has:

\algn{
	\Ab^*\gf = \gf
		&\implies& && 					{\Ab_\vr^\vr}^\star \cdot \gf_\vr^\vr \cdot \Ab_\vr^\vr &= \gf_\vr^\vr\\
		&\implies& &\forall x \in \E,& 	\at{\Ab_\vr^\vr}_x^\star \cdot {\varthetab_{\ab(x)}^{-1}}^\star \cdot \gb_{\ov \ab(\ov x)} \cdot \varthetab_{\ab(x)}^{-1} \cdot \at{\Ab_\vr^\vr}_x &= {\varthetab_{x}^{-1}}^\star \cdot \gb_{\ov x} \cdot \varthetab_{x}^{-1}\\
		&\implies& &\forall x \in \E,& 	\varthetab_{x}^\star \cdot \at{\Ab_\vr^\vr}_x^\star \cdot {\varthetab_{\ab(x)}^{-1}}^\star \cdot \gb_{\ov \ab(\ov x)} \cdot \varthetab_{\ab(x)}^{-1} \cdot \at{\Ab_\vr^\vr}_x \cdot \varthetab_{x} &= \gb_{\ov x}\\
		&\implies& &\forall x \in \E,& \varthetab_{\ab(x)}^{-1} \cdot \at{\Ab_\vr^\vr}_x \cdot \varthetab_{x} &=: \Ob_x \in \O\left (\gb_{\ov x},~ \gb_{\ov \ab(\ov x)}\right )\\
\\
	\Ab^*\gf = \gf
		&\implies& && {\Ab}^* \gf_\hr^\vr &= \gf_\hr^\vr\\
		&\implies& && 	{\Ab_\vr^\vr}^\star \cdot \gf_\hr^\vr \cdot {\Ab_\hr^\hr}
			+ {\Ab_\vr^\vr}^\star \cdot \gf_\vr^\vr \cdot {\Ab_\hr^\vr} &= \gf_\hr^\vr\\
		&\implies& &\forall x \in \E,&
			\at{\Ab_\vr^\vr}_x^\star \cdot {\varthetab_{\ab(x)}^{-1}}^\star \cdot \gb_{\ov \ab(\ov x)} \cdot \Tr_{\ab(x)}\pi_\E \cdot \at{\Ab_\hr^\hr}_x\qquad&\\
			&&&&+~{\at{\Ab_\vr^\vr}}_x^\star \cdot {\varthetab_{\ab(x)}^{-1}}^\star \cdot \gb_{\ov \ab (\ov x)} \cdot \varthetab_{\ab(x)}^{-1}\cdot\at{\Ab_\hr^\vr}_x &= {\varthetab_{x}^{-1}}^\star \cdot \gb_{\ov x} \cdot \Tr_x\pi_\E\\
		&\implies& &\forall x \in \E,& 
		{\varthetab_{x}}^\star\cdot \at{\Ab_\vr^\vr}_x^\star \cdot {\varthetab_{\ab(x)}^{-1}}^\star \cdot \gb_{\ov \ab(\ov x)} \cdot \Tr_{\ab(x)}\pi_\E \cdot \at{\Ab_\hr^\hr}_x\qquad&\\
			&&&&+ {\varthetab_{x}}^\star \cdot {\at{\Ab_\vr^\vr}}_x^\star \cdot {\varthetab_{\ab(x)}^{-1}}^\star \cdot \gb_{\ov \ab (\ov x)} \cdot \varthetab_{\ab(x)}^{-1}\cdot\at{\Ab_\hr^\vr}_x &= \gb_{\ov x} \cdot \Tr_x\pi_\E\\
		&\implies& &\forall x \in \E,& 	\Ob_x^\star \cdot \gb_{\ov \ab(\ov x)} \cdot \Tr_{\ab(x)}\pi_\E \cdot \at{\Ab_\hr^\hr}_x
			+ \Ob_x^\star \cdot \gb_{\ov \ab (\ov x)} \cdot \varthetab_{\ab(x)}^{-1}\cdot\at{\Ab_\hr^\vr}_x &= \gb_{\ov x} \cdot \Tr_x\pi_\E\\
		&\implies& &\forall x \in \E,& 	\gb_{\ov x} \cdot \Ob_x^{-1} \cdot \Tr_{\ab(x)}\pi_\E \cdot \at{\Ab_\hr^\hr}_x
			+ \gb_{\ov x} \cdot \Ob_x^{-1} \cdot \varthetab_{\ab(x)}^{-1}\cdot\at{\Ab_\hr^\vr}_x &= \gb_{\ov x} \cdot \Tr_x\pi_\E\\
		&\implies& &\forall x \in \E,& 	\Ob_x^{-1} \cdot \Tr_{\ab(x)}\pi_\E \cdot \at{\Ab_\hr^\hr}_x
			+ \Ob_x^{-1} \cdot \varthetab_{\ab(x)}^{-1}\cdot\at{\Ab_\hr^\vr}_x &= \Tr_x\pi_\E\\
		&\implies& &\forall x \in \E,& 	\Ob_x^{-1} \cdot \Tr_{\ab(x)}\pi_\E \cdot \at{\Ab_\hr^\hr}_x \cdot \gammab_x
			+ \Ob_x^{-1} \cdot \varthetab_{\ab(x)}^{-1} \cdot \at{\Ab_\hr^\vr}_x \cdot \gammab_x &= \Id_x\\
		&\implies& &\forall x \in \E,& 	\Tr_{\ab(x)}\pi_\E \cdot \at{\Ab_\hr^\hr}_x \cdot \gammab_x
			+ \varthetab_{\ab(x)}^{-1} \cdot \at{\Ab_\hr^\vr}_x \cdot \gammab_x &= \Ob_x\\
		&\implies& &\forall x \in \E,& 	\ov\Ab_{\ov x}
			+ \varthetab_{\ab(x)}^{-1} \cdot \at{\Ab_\hr^\vr}_x \cdot \gammab_x &= \Ob_x\\
}

with the notation
\[
	\O\left (\gb_{\ov x},~ \gb_{\ov \ab(\ov x)}\right ) = \setof{\ov\Ab_\ov x : \Tr_\ov x\EE \longto \Tr_{\ov \ab(\ov x)}\EE}{{\ov\Ab_\ov x}^\star\cdot\gb_{\ov \ab(\ov x)}\cdot \ov\Ab_\ov x = \gb_x}
\]
which, using holonomic coordinates, can be identified with the group $\O(3)$ of orthogonal matrices. One then obtains two equalities that must be satisfied by any pseudo-metric-preserving map $\Ab$. By substituting those equalities back into \cref{lemm:Gf} and simplifying, one cycles back to $\Ab^*\gf=\gf$. This means that these two equations are also sufficient. One therefore has the following equivalence:\\

\begin{lemm}
\label{lem:pres_gf}
Let $\Ab \in \lpa{\Tr\E}{\Tr\E}$, then one has:
\algn{
	\Ab^*\gf = \gf &\iff& &\forall x \in \E,%
		&\\
		&&&&\ov\Ab_{\ov x} + \varthetab_{\ab(x)}^{-1} \cdot \at{\Ab_\hr^\vr}_x \cdot \gammab_x = \varthetab_{\ab(x)}^{-1} \cdot \at{\Ab_\vr^\vr}_x \cdot \varthetab_{x} \in \O\left (\gb_{\ov x},~ \gb_{\ov \ab(\ov x)}\right )&\\
		&\iff& &\forall x \in \E,~\exists \Ob_x \in \O\left (\gb_{\ov x},~ \gb_{\ov \ab(\ov x)}\right ),%
		\hspace{-61pt}&\\
			&&&&\Ab_x \equiv \bmat{%
				\gammab_{\ab(x)}\cdot\ov\Ab_\ov{x}\cdot\Tr_x\pi_\E & \bold 0\\%
				\vartheta_{\ab(x)}\cdot\left (\Ob_x - \ov\Ab_\ov{x}\right )\cdot\Tr_x\pi_\E & \vartheta_{\ab(x)}\cdot\Ob_x\cdot\varthetab_x^{-1}%
			}
}
\end{lemm}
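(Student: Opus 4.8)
The plan is to establish the chain $\Ab^*\gf=\gf \Leftrightarrow (\mathrm{A}) \Leftrightarrow (\mathrm{B})$, where $(\mathrm{A})$ denotes the single orthogonality condition and $(\mathrm{B})$ the explicit block form. The forward implication $\Ab^*\gf=\gf \Rightarrow (\mathrm{A})$ is essentially the computation displayed just before the statement: since $\Ab$ preserves $\gf$ it preserves each of its blocks, and feeding the blocks $\gf_\vr^\vr$ and $\gf_\hr^\vr$ from \cref{lemm:Gf} into $\Ab^*\gf=\gf$ produces exactly the two relations $\Ob_x := \varthetab_{\ab(x)}^{-1}\cdot\at{\Ab_\vr^\vr}_x\cdot\varthetab_x \in \O(\gb_{\ov x}, \gb_{\ov\ab(\ov x)})$ and $\ov\Ab_{\ov x} + \varthetab_{\ab(x)}^{-1}\cdot\at{\Ab_\hr^\vr}_x\cdot\gammab_x = \Ob_x$, whose conjunction is $(\mathrm{A})$. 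So the first step is simply to record that derivation as the ``only if'' direction.

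For the converse $(\mathrm{A}) \Rightarrow \Ab^*\gf=\gf$, I would substitute the two relations of $(\mathrm{A})$ back into the four block formulae of \cref{lemm:Gf} applied to $\Gf := \Ab^*\gf$ and verify, block by block, that $\Gf_\vr^\vr = \gf_\vr^\vr$, that $\Gf_\hr^\vr = \gf_\hr^\vr$ (whence $\Gf_\vr^\hr = {\Gf_\hr^\vr}^\star = \gf_\vr^\hr$), and finally that $\Gf_\hr^\hr = \gf_\hr^\hr$. To make the substitution explicit I would first record, from $\gf = \interproj^*\gb$ (\cref{th:unis_gf}) together with the identities $\interproj\cdot\hr_\gammab = \Tr\pi_\E$ and $\interproj\cdot\vr_\gammab = \varthetab^{-1}\cdot\vr_\gammab$, that on the respective blocks $\gf$ acts as $\Tr\pi_\E^\star\,\gb\,\Tr\pi_\E$, ${\varthetab^{-1}}^\star\gb\,\Tr\pi_\E$ and ${\varthetab^{-1}}^\star\gb\,\varthetab^{-1}$; and second the physical-acceptability identity $\at{\Ab_\hr^\hr}_x = \gammab_{\ab(x)}\cdot\ov\Ab_{\ov x}\cdot\Tr_x\pi_\E$, which follows from \cref{lemm:fvh_eq_zero} with $\RefCon\E=\gammab$.

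The step I expect to be the main obstacle is the $\Gf_\hr^\hr$ block, since it is the one constraint that is not imposed directly and must instead be recovered from the combination of both relations in $(\mathrm{A})$ together with the orthogonality $\Ob_x^\star\,\gb_{\ov\ab(\ov x)}\,\Ob_x = \gb_{\ov x}$. After substituting $\at{\Ab_\hr^\vr}_x\cdot\gammab_x = \varthetab_{\ab(x)}\cdot(\Ob_x - \ov\Ab_{\ov x})$ and $\at{\Ab_\vr^\vr}_x = \varthetab_{\ab(x)}\cdot\Ob_x\cdot\varthetab_x^{-1}$, I expect the four-term expansion of $\Gf_\hr^\hr$ to reorganise into $\Ob_x^\star\,\gb_{\ov\ab(\ov x)}\,\Ob_x$ conjugated by $\Tr\pi_\E\cdot\gammab$, with the genuine cross terms between $\ov\Ab_{\ov x}$ and the coupling block cancelling; carefully tracking those cancellations is the delicate bookkeeping, and it is the only place where all three ingredients of $(\mathrm{A})$ are used simultaneously.

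Finally, the equivalence $(\mathrm{A}) \Leftrightarrow (\mathrm{B})$ is a purely algebraic rewriting. Assuming $(\mathrm{A})$, I would set $\Ob_x := \varthetab_{\ab(x)}^{-1}\cdot\at{\Ab_\vr^\vr}_x\cdot\varthetab_x$ and solve for $\at{\Ab_\vr^\vr}_x = \varthetab_{\ab(x)}\cdot\Ob_x\cdot\varthetab_x^{-1}$, rearrange the first relation into $\at{\Ab_\hr^\vr}_x = \varthetab_{\ab(x)}\cdot(\Ob_x - \ov\Ab_{\ov x})\cdot\Tr_x\pi_\E$ (restoring the $\Tr_x\pi_\E$ via $\Ab_\hr^\vr\cdot\hr_\gammab = \Ab_\hr^\vr$), and supply $\Ab_\vr^\hr = \bold0$ from \cref{lemm:fvh_eq_zero} and $\at{\Ab_\hr^\hr}_x = \gammab_{\ab(x)}\cdot\ov\Ab_{\ov x}\cdot\Tr_x\pi_\E$ for the two remaining blocks; assembling them yields precisely the matrix of $(\mathrm{B})$. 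The reverse direction reads the four blocks off $(\mathrm{B})$ and checks the orthogonality condition of $(\mathrm{A})$ directly, closing the cycle.
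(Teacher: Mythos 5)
Your proposal is correct and takes essentially the same route as the paper: the forward direction is the block computation displayed immediately before the lemma, and the converse is obtained by substituting the two relations back into the block formulae of \cref{lemm:Gf} and simplifying, which is exactly what the paper asserts (without detail) in the sentence ``By substituting those equalities back into \cref{lemm:Gf} and simplifying, one cycles back to $\Ab^*\gf=\gf$.'' Your identification of the $\Gf_\hr^\hr$ block as the only place where the orthogonality $\Ob_x^\star\cdot\gb_{\ov\ab(\ov x)}\cdot\Ob_x = \gb_{\ov x}$ must be invoked — the four terms recombining into the complete square $\Tr_x\pi_\E^\star\cdot\Ob_x^\star\cdot\gb_{\ov\ab(\ov x)}\cdot\Ob_x\cdot\Tr_x\pi_\E$ rather than literally cancelling — correctly fills in the ``simplifying'' step the paper leaves implicit.
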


In particular, this lemma shows that, while preserving $\gb$ is equivalent to being Galilean in the classical case, preserving $\gf$ is not enough in the generalised case. The missing step is the following lemma:

\begin{lemm}
\label{lem:pres_gamma}
Let $\Ab \in \lpa{\Tr\E}{\Tr\E}$ be injective over an injective $\ab : \E \longto \E$, then:
\algn{
	\Ab^*\gammab = \gammab &\iff& \Ab_\hr^\vr = \bold0
}
\end{lemm}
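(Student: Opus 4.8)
The plan is to read $\Ab^{*}\gammab$ off its horizontal projection, which \cref{def:Gammab} already computes for us, and then strip away an invertible factor. Recall that the ambient reference connection is $\RefCon\E = \gammab$. Applying \cref{def:Gammab} with $\Fb=\Ab$ — which is legitimate since $\Ab$ is injective over the injective $\ab$, and this is all that the derivation of the horizontal projection in that proof actually uses — yields
\[
    \hr_{\Ab^{*}\gammab} = \hr_{\RefCon\E} - {\Ab_\vr^\vr}^{-1}\cdot\Ab_\hr^\vr = \hr_\gammab - {\Ab_\vr^\vr}^{-1}\cdot\Ab_\hr^\vr .
\]

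First I would invoke \cref{lemm:lemm_equiv_con}: a linear connection is entirely determined by its horizontal projection, so $\Ab^{*}\gammab = \gammab$ is equivalent to $\hr_{\Ab^{*}\gammab} = \hr_\gammab$, i.e. to the algebraic identity ${\Ab_\vr^\vr}^{-1}\cdot\Ab_\hr^\vr = \bold0$. This reduces the whole statement to a single cancellation. Next I would remove the pseudo-inverse. The image of $\Ab_\hr^\vr$ lies in the (target) vertical space $\Vr\E$, and ${\Ab_\vr^\vr}^{-1}$ is injective there: exactly as in the proof of \cref{def:Thetab}, injectivity and fibre-preservation of $\Ab$ give $\Ab\cdot\Vr\E = \Vr\E$, so $\Ab_\vr^\vr$ is a bijection of $\Vr\E$ onto itself, whence its pseudo-inverse from \cref{nota:part_inv} is injective on all of $\Vr\E$. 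Therefore ${\Ab_\vr^\vr}^{-1}\cdot\Ab_\hr^\vr = \bold0 \iff \Ab_\hr^\vr = \bold0$, and chaining this with the previous equivalence delivers the claim.

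The calculation itself is short; the only genuine point to get right is the cancellation of ${\Ab_\vr^\vr}^{-1}$, namely verifying that it is injective precisely on the subspace $\Imr\left(\Ab_\hr^\vr\right)\subseteq\Vr\E$ on which it is applied. This is exactly where the injectivity hypotheses on $\Ab$ and $\ab$ enter: the reverse implication $\Ab^{*}\gammab=\gammab \Rightarrow \Ab_\hr^\vr=\bold0$ would fail without them (the forward implication $\Ab_\hr^\vr=\bold0 \Rightarrow \Ab^{*}\gammab=\gammab$ being trivial from the displayed formula). As a consistency check one may also confirm the equivalence directly in holonomic coordinates, where $\Ab_\hr^\vr$ is precisely the block encoding the coupling term and where $\Ab_\vr^\vr$ is seen to be an invertible micro-block.
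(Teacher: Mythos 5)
Your proof is correct, but it takes a genuinely different route from the paper's. The paper proves the lemma inline: writing $\left(\Ab^*\gammab\right)_x = {\Ab_x}^{-1}\cdot\gammab_{\ab(x)}\cdot\ov\Ab_{\ov x}$, composing with the surjection $\Tr\pi_\E$ and multiplying back by $\Ab_x$, it converts the statement into the commutation relation $\hr_\gammab\cdot\Ab_x = \Ab_x\cdot\hr_\gammab$, i.e. $\Ab^\hr = \Ab_\hr$, and then reads off $\Ab_\hr^\vr = \bold0$ from the block decomposition using $\Ab_\vr^\hr = \bold0$; injectivity enters only through ${\Ab_x}^{-1}\cdot\Ab_x = \Id$, and no pseudo-inverse is ever cancelled. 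You instead reuse the horizontal-projection formula of \cref{def:Gammab}, which reduces the claim to ${\Ab_\vr^\vr}^{-1}\cdot\Ab_\hr^\vr = \bold0$, and then cancel ${\Ab_\vr^\vr}^{-1}$ via its injectivity on the vertical spaces. Both extra ingredients you need are sound: the derivation in \cref{def:Gammab} indeed only requires fibre preservation and fibre-wise injectivity, so it applies even though $\Ab$ is not literally a placement map (this applicability check is essential, and you rightly flag it), and the argument of \cref{def:Thetab} gives the fibre-wise bijectivity $\Ab_x\cdot\Vr_x\E = \Vr_{\ab(x)}\E$ that legitimises the cancellation. Your route buys economy, outsourcing the computation already done for \cref{def:Gammab}, at the price of these two verifications which the paper's self-contained computation avoids. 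Two small imprecisions to fix: the fact that a connection is determined by its horizontal projection is not really \cref{lemm:lemm_equiv_con} (which concerns the equivalent data attached to a \emph{linear} connection); it is simply the surjectivity of $\Tr\pi_\E$, which is exactly how the paper justifies composing with $\Tr\pi_\E$ in its chain of equivalences. And $\Ab_\vr^\vr$ is fibre-wise a bijection $\Vr_x\E \longto \Vr_{\ab(x)}\E$, not a bijection of $\Vr\E$ \say{onto itself}; the injectivity of the pseudo-inverse should accordingly be stated fibre-wise, which suffices since $\Imr\left(\at{\Ab_\hr^\vr}_x\right) \subset \Vr_{\ab(x)}\E$.
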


\begin{proof}
\algn{
	\Ab^*\gammab = \gammab
			&\iff& &\forall x \in \E,& \left (\Ab^*\gammab\right )_x &= \gammab_x\\
			&\iff& &\forall x \in \E,& \left (\Ab^*\gammab\right )_x\cdot\Tr\pi_\E &= \gammab_x\cdot\Tr\pi_\E\\
			&\iff& &\forall x \in \E,& {\Ab_x}^{-1}\cdot\gammab_{\ab(x)}\cdot\ov{\Ab}_{\ov x}\cdot\Tr\pi_\E &= \hr_\gammab\\
			&\iff& &\forall x \in \E,& {\Ab_x}^{-1}\cdot \gammab_{\ab(x)}\cdot\Tr\pi_\E \cdot\Ab_x  &= \hr_\gammab\\
			&\iff& &\forall x \in \E,& \gammab_{\ab(x)}\cdot\Tr\pi_\E\cdot\Ab_x &= \Ab_x\cdot\hr_\gammab\\
			&\iff& &\forall x \in \E,& \hr_\gammab\cdot\Ab_x &= \Ab_x\cdot\hr_\gammab\\
			&\iff& &&				\Ab^\hr &= \Ab_\hr\\
			&\iff& &&				\Ab^\hr_\hr &= \Ab_\hr^\hr + \Ab_\hr^\vr\\
			&\iff& &&				\bold0 &= \Ab_\hr^\vr
}

where $\Ab_\vr^\hr=0$ has been used, since $\Ab\in\lpa{\Tr\E}{\Tr\E}$.
\end{proof}

By using this lemma with $\Ab$ and $\Tr\ab$, one eventually has enough equations to retrieve $\Tr\Galf$. The result, contained in the following lemma, states that the group $\Tr\Galf$ is composed of those physically acceptable maps $\Ab$ which are not only a stabiliser for $\gf$ and $\gammab$ but whose shadow's tangent $\Tr\ab$ is a stabiliser of $\gammab$:\\

\begin{theo}{$\Tr\Galf$ as the intersection of stabilisers}{galf_as_invar}
	\label{lemm:galf_as_invar}
	The first-order Galilean group can be given by the following equations:
	\algn{
		\Tr\Galf &= \setof{\Ab \in \lpa{\Tr\E}{\Tr\E}}{\mat{%
		  				\Ab~\text{inversible},~~\ab~\text{inversible},\\
				  		\Ab^*\gf=\gf,~~\Ab^*\gammab = \gammab~~\text{and}~~\Tr\ab^*\gammab=\gammab
		  		}}
	}
	where $\ab : \E \longto \E$ implicitly refers to the punctual shadow of $\Ab$ in $\l{\Tr\E}{\Tr\E}$.
\end{theo}

\begin{proof}
	First, let $x \in \E$ be a fixed value. From \cref{lem:pres_gf} one has that, if $\Ab^*\gf = \gf$, there exists some $\Ob_x \in \O\left (\gb_{\ov x},~ \gb_{\ov \ab(\ov x)}\right )$ such that:
	\algn{
		\Ab_x &\equiv \bmat{\gammab_{\ab(x)}\cdot\ov\Ab_\ov{x}\cdot\Tr_x\pi_\E & \bold 0\\%
				\vartheta_{\ab(x)}\cdot\left (\Ob_x - \ov\Ab_\ov{x}\right )\cdot\Tr_x\pi_\E & \vartheta_{\ab(x)^{-1}}\cdot\Ob_x\cdot\varthetab_x}
	}
	Using \cref{lem:pres_gamma}, one then sees that $\Ab^*\gammab=\gammab$ further implies $\vartheta_{\ab(x)}\cdot\left (\Ob_x - \ov\Ab_\ov{x}\right )\cdot\Tr_x\pi_\E = \bold0$. That is, since $\Tr\pi_\E$ is surjective and $\varthetab$ is injective, $\Ob_x = \ov\Ab_\ov x$. In particular $\Ob_x$ only depends (smoothly) on $\ov x$.\\
	
	Furthermore, by definition, $\Ab \in \lpa{\Tr\E}{\Tr\E}$ implies $\ab \in \aff\E\E$ and $\Ab_\vr^\vr = \left (\Tr\ab\right )_\vr^\vr$. Where the later is the gradient of $\ab \in \aff\E\E$ in the (punctual) vertical coordinate. One therefore sees that those two conditions imply that, in the holonomic coordinates, $\Ob$ corresponds to both the macroscopic gradient $\Tr\ov\ab$ and the microscopic gradient $\left (\Tr\ab\right )_\vr^\vr$. That is, $\Ab$ takes the form:
	
	\algn{
		\Ab : \fun{\Tr\E}{\Tr\E}{\bmat{\bmat{\ov x\\ y} \\ \bmat{\delta \ov x\\\delta y}}}%
			{\bmat{\bmat{\ov{\ab}\left (\ov x\right ) \\ \Ob_\ov x \cdot y + \tb^\vr(\ov x)} \\%
				  \bmat{\Ob_\ov x \cdot \delta \ov x \\ \Ob_\ov x \cdot \delta y}}}
	}
	
	with ${\ov\ab}'(\ov x) = \Ob_\ov x$, where prime denote derivation with respect to the sole argument. By integrating, one has for $\left (\ov{x}_1, \ov{x}_2\right ) \in \EE^2$
\algn{
	\dr\left (\ov{x}_1, \ov{x}_2\right )
			&= \int_{\ab\left (\ov{x}_1\right )}^{\ab\left (\ov{x}_2\right )} \left \|\dr\ov x\right \|_\gb \dr\ov x\\
			&= \int_{\ov{x}_1}^{\ov{x}_2} \left \|\Ob_\ov x \cdot \dr\ov x\right \|_\gb \dr\ov x\\
			&= \int_{\ov{x}_1}^{\ov{x}_2} \left \|\dr\ov x\right \|_\gb \dr\ov x\\
			&= \dr\left (\ov{x}_1, \ov{x}_2\right )	
}
where $\dr$ is the euclidean distance. This means that $\ov\ab$ is an isometry of $\EE$ and, using standard results of the Euclidean geometry, $\ov\ab(\ov x) = \Rb\cdot \ov x + \ov\tb$ or, in other words, $\Ob_\ov x = \Rb$ constant.  Using \cref{lem:pres_gamma} with $\left (\Tr\ab\right )^*\gammab = \gammab$ one has $\left (\Tr\ab\right )_\hr^\vr = \bold0$. That is, $\ider{\ov x}{\Rb \cdot y + \tb^\vr(\ov x)} = 0$. This means $\ider{\ov x}{\tb^\vr(\ov x)} = 0$. $\Ab$ therefore takes the following form:
	\algn{
		\Ab : \fun{\Tr\E}{\Tr\E}{\bmat{\bmat{\ov x\\ y} \\ \bmat{\delta \ov x\\\delta y}}}%
			{\bmat{\bmat{\Rb\cdot\ov x + \ov\tb \\ \Rb \cdot y + \tb^\vr} \\%
				  \bmat{\Rb \cdot \delta \ov x \\ \Rb \cdot \delta y}}}
	}
According to \cref{lemm:form_TGalf}, this implies $\Ab \in \Tr\Galf$. Reciprocally, the form stated in \cref{lemm:form_TGalf} describes a fully invertible element of $\lpa{\Tr\E}{\Tr\E}$ which always verifies $\Ab^*\gammab = \left (\Tr\ab\right )^*\gammab = \gammab$ and $\Ab^*\gf=\gf$ thus proving the equality.
\end{proof}

\subsection{Orbital invariants of \texorpdfstring{$\Tr\Galf$}{the generalised Galilean group}}

Let $\bundle{\B}{\BB}$ be an affine bundle. The set $\Tr\Galf$ of generalised Galilean transformations is a group for the composition, acting on $\conf{\Tr\B}{\Tr\E}\subset\lpa{\Tr\B}{\Tr\E}$ by composition:
\algn{
	\fun{\Tr\Galf \times \conf{\Tr\B}{\Tr\E}}{\conf{\Tr\B}{\Tr\E}}{(\Of, \Fb)}{\Of\circ\Fb}
}

The aim of this section is to characterise the orbits\footnote{Let $\Fb \in \conf{\Tr\B}{\Tr\E}$. Then the orbit of $\Fb$ under the action of $\Tr\Galf$ is $\opp[\Tr\Galf]{Orb}\Fb = \setof{\Of\circ\Fb}{\Of \in \Tr\Galf}$} of this action $\Tr\Galf \acts \conf{\Tr\B}{\Tr\E}$ using a complete set of invariants. That is, functionals of $\Fb \in \conf{\Tr\B}{\Tr\E}$ which are constant on the orbits.\\

One could characterise the orbits of $\Fb$ using $\Fb^*\gf$, $\Fb^*\gammab$ and $\Tr\varphi^*\gammab$, as suggested in \cref{lemm:galf_as_invar}, however they are quite inhomogeneous in size. Indeed, while both connections are, roughly speaking, $n\times3$ matrices \partxt{with $n=\dim\left (\BB\right )$}, the pseudo-metric is significantly larger with $(n+3)\times(n+3)$ entries. Before continuing, splitting $\Gf$ into smaller elements may improve the clarity. This is done using the following equivalence of data:\\

\begin{theo}{Data equivalence}{data_equiv}
	Let $\Fb \in \lconf{\Tr\B}{\Tr\E}$ be unknown. Then, the data of $\Gf = \Fb^\star\cdot\gf\cdot\Fb$ is equivalent to the joint data of:
	\begin{enumerate}
		\it the micro-metric $\Gf_\vr^\vr : \Vr\B \longto \Vr^\star\B$
		\it the connection $\Gammab - \Thetab : \B\times_\BB\Tr\BB\longto \Tr\B$
	\end{enumerate}

	Furthermore:
	\[
		\Gb = \Thetab^\star \cdot \Gf_\vr^\vr \cdot \Thetab
	\]
	In particular, given that $\Thetab$ is known and $\dim\left (\F_\B\right ) = \dim\left (\BB\right )$, the data of $\Gf_\vr^\vr$ is equivalent to the data of $\Gb$.
\end{theo}

\begin{proof}
	First, $\Gf$ implies $\Gf_\vr^\vr$ by standard restriction from $\Tr\B$ to $\Vr\B$ and $\Gammab-\Thetab$ through $\ker\Gf = \Hr^{\Gammab-\Thetab}$. For the reverse implication one has:
	\byalgn{
		\vr_{\Gammab-\Thetab} &= \Id - \left (\Gammab-\Thetab\right )\cdot\Tr\pi_\B\\
							  &= \Id - \left (\Fb^{-1}\cdot\gammab\cdot\ov{\Fb}\cdot\Tr\pi_\B-\Fb^{-1}\cdot\varthetab\cdot\ov{\Fb}\cdot\Tr\pi_\B\right )\\
							  &= \Id - \left (\Fb^{-1}\cdot\gammab\cdot\Tr\pi_\E\cdot\Fb-\Fb^{-1}\cdot\varthetab\cdot\Tr\pi_\E\cdot\Fb\right )\\
							  &= \Id - \Fb^{-1}\cdot\left (\gammab-\varthetab\right )\cdot\Tr\pi_\E\cdot\Fb\\
							  &= \Fb^{-1}\cdot\vr_{\gammab-\varthetab}\cdot\Fb\\
	}{
		\vr_{\gammab-\varthetab} &= \Id - \left (\gammab - \varthetab\right )\cdot\Tr\pi_\E\\
								 &= \Id - \hr_\gammab + \varthetab\cdot\Tr\pi_\E\\
								 &= \vr_\gammab + \varthetab\cdot\Tr\pi_\E\\
		\varthetab^{-1}\cdot \vr_{\gammab-\varthetab}
								 &= \varthetab^{-1}\cdot\vr_\gammab + \Tr\pi_\E\\
								 &= \iota
	}

	\byalgn{
			\varthetab^{-1}\cdot\vr_\gammab\cdot\Fb \cdot \vr_{\Gammab-\Thetab}%
				\hspace{-34pt}&\\
				&= \varthetab^{-1}\cdot\vr_\gammab\cdot\Fb \cdot \Fb^{-1}\cdot\vr_{\gammab-\varthetab}\cdot\Fb\\
				&= \varthetab^{-1}\cdot\vr_\gammab\cdot\vr_{\gammab-\varthetab}\cdot\Fb\\
				&= \varthetab^{-1}\cdot\vr_{\gammab-\varthetab}\cdot\Fb\\
				&= \interproj\cdot\Fb
		}{
		\vr_{\Gammab-\Thetab}^\star\cdot\Gf_\vr^\vr\cdot\vr_{\Gammab-\Thetab}%
				\hspace{-35pt}&\\
					&= \left [\cdots\right ]^\star \cdot \gb \cdot \left [\varthetab^{-1}\cdot\vr_\gammab\cdot\Fb\cdot \vr_{\Gammab-\Thetab}\right ]\\
					&= \Fb^\star\cdot\interproj^\star \cdot \gb \cdot \interproj\cdot\Fb\\
					&= \Fb^\star \cdot \gf \cdot \Fb\\
					&= \Gf
	}
	And one sees that the term $\vr_{\Gammab-\Thetab}^\star\cdot\Gf_\vr^\vr\cdot\vr_{\Gammab-\Thetab}$ only involves $\Gf_\vr^\vr$, $\Gammab-\Thetab$ and $\Tr\pi_\B$.\\
	
	\hrulefill\\	

	For the second part, one has:
		\algn{
			\Thetab^\star\cdot\Gf_\vr^\vr \cdot\Thetab
					&= \left [\ov{\Fb}^\star\cdot\varthetab^\star\cdot{{\Fb_\vr^\vr}^{-1}}^\star\right ]\cdot\left [{\Fb_\vr^\vr}^\star\cdot\gf_\vr^\vr\cdot\Fb_\vr^\vr\right ]\cdot\left [{\Fb_\vr^\vr}^{-1}\cdot\varthetab\cdot\ov{\Fb}\right ]\\
					&= \ov{\Fb}^\star\cdot\varthetab^\star\cdot\gf_\vr^\vr\cdot\varthetab\cdot\ov{\Fb}\\
					&= \left [\ov{\Fb}^\star\cdot\varthetab^\star\right ]\cdot\left [\vr_\gammab^\star\cdot{\varthetab^{-1}}^\star\right ]\cdot\gb\cdot\left [\varthetab^{-1}\cdot\vr_\gammab\right ]\cdot\left [\varthetab\cdot\ov{\Fb}\right ]\\
					&= \ov{\Fb}^\star\cdot\gb\cdot\ov{\Fb}\\
					&= \Gb
		}
	Meaning that $\Thetab \in \operatorname{Iso}\left (\Gb, \Gf_\vr^\vr\right )$ is an isometry. And allows to acquire $\Gb$ from $\Gf_\vr^\vr$, and vis-versa if $\Thetab$ is invertible \partxt{which is the case iff $\dim\left (\F_\B\right ) = \dim\left (\BB\right )$}.
\end{proof}

Let $\Fb \in \conf{\Tr\B}{\Tr\E}$ and $\Fb' \in \conf{\Tr\B}{\Tr\E}$ be two, possibly distinct, placement maps (beware that primes are \textbf{not} derivatives). Let $\widehat{\Fb} := \Fb'\cdot\Fb^{-1} : \Imr\left (\Fb\right ) \longto \Tr\E$ and let $\varphi$, $\varphi'$ and $\widehat{\varphi}$ be their respective punctual shadows. Then, $\widehat{\Fb} \in \lpa{\Tr\E}{\Tr\E}$. Furthermore, one has:

\algn{
	\Orb{\Fb} = \Orb{\Fb'} &\iff& &\exists \Ab \in \Tr\Galf,& ~~~\Fb' &= \Ab\cdot\Fb\\
						   &\iff& &\exists \Ab \in \Tr\Galf,& \Fb'\cdot\Fb^{-1} &= \Ab_{\Imr\left (\Fb\right )}\\
						   &\iff& && \widehat{\Fb} &\in \at{\Tr\Galf}_{\Imr\left (\Fb\right )}
}

\renewcommand{\tmp}{\mathbf{\kappa}}

where $\at{\Tr\Galf}_\S := \setof{\at{\Ab}_\S}{\Ab \in \Tr\Galf}$. Using the fact that the pull-back is a group action $-$ that is, $\left (\Ab_1\circ\Ab_2\right )^*\kappa = \Ab_1^*\left (\Ab_2^*\kappa\right )$ $-$ one obtains that pulling back by $\widehat\Fb$ preserves a tensor if and only if pulling back by $\Fb$ and $\Fb'$ gives the same new tensor.\\

These two observations, along with earlier results, are the ingredients allowing us to prove the main result of this paper:\\

\begin{theo}{Orbital invariants of $\Tr\Galf \acts \conf{\Tr\B}{\Tr\E}$}{inv_orb}
	Let $\bundle{\B}{\BB}$ be an affine bundle, $\Tr\Galf$ be the generalised first-order Galilean group on $\E$ acting on the left of $\conf{\Tr\B}{\Tr\E}$ and $\S_\vr^{++}\left (\B\right )$ be the set of micro-metrics\footnote{The data of $\Mf \in \S_\vr^{++}\left (\B\right )$ is the smooth data of a metric $\displaystyle\Mf_{\ov X} \in \S^{++}\left (\B_\ov X\right )$ on $\displaystyle\B_\ov X$ for each $\ov X \in \BB$. In particular, locally, $\S_\vr^{++}\left (\B\right )$ looks like $\smooth{\BB}{\S^{++}\left (\F_\B\right )}$.} on $\B$.\\
	
	Let $\II$ be the following application:\footnote{Recall the notation $\Fb^*\gf_\vr^\vr = {\Fb_\vr^\vr}^*\gf_\vr^\vr = {\Fb_\vr^\vr}^\star\cdot\gf_\vr^\vr\cdot\Fb_\vr^\vr = \Gf_\vr^\vr$.}
	\algn{
		\II &: \fun%
		{\conf{\Tr\B}{\Tr\E}}%
		{\S^{++}_\vr\left (\B\right )\times\opp{Con}{\B}\times\opp{Con}{\B}\times\opp{Sold}{\B}}%
		{\Fb}%
		{\raisebox{-0.75em}{$\begin{matrix*}[r]%
			\II\left (\Fb\right )%
				&=& \left (\vphantom{\Fb^*}\right .\hspace{-1em}&\Fb^*\gf_\vr^\vr,&~\Fb^*\varthetab,&~\Fb^*\gammab,&\left .~\Tr\varphi^*\gammab\right )\\%
				&=& \left (\vphantom{\Fb^*}\right .\hspace{-1em}&\Tr\varphi^*\gf_\vr^\vr,&~\Tr\varphi^*\varthetab,&~\Fb^*\gammab,&\left .~\Tr\varphi^*\gammab\right )%
			\end{matrix*}$}%
		}
	}
	The application $\II$ is \underline{constant on the orbits} of the action $\Tr\Galf \acts \conf{\Tr\B}{\Tr\E}$ and provides, using the fundamental theorem on homomorphisms, the following bijection:
	\algn{
		\faktor{\conf{\Tr\B}{\Tr\E}}{\Tr\Galf} \quad &\xhooktwoheadrightarrow{\quad\II\quad}& \Imr\left (\II\right )
	}
	
	In other words, \underline{$\II$ is complete}, meaning it characterises the orbits:
	\algn{
		&\forall \left (\Fb_1, \Fb_2\right ) \in  \conf{\Tr\B}{\Tr\E}^2,& &\Orb{\Fb_1}=\Orb{\Fb_2}& &\iff& &\II\left (\Fb_1\right ) = \II\left (\Fb_2\right )&
	}
	
	\hrulefill\\
	
	The four elements $\Fb^*\gf_\vr^\vr$, $\Fb^*\gammab$, $\Tr\varphi^*\gammab$ and $\Fb^*\varthetab$ will be called the \underline{principal tensorial invariants}. $\II$ will be called principal tensorial invariants quadruplet or, by metonymy, simply principal tensorial invariants.
\end{theo}

\begin{proof}
First, one has:
	\byalgn{
		\Fb^*\gf_\vr^\vr &= {\Fb_\vr^\vr}^*\gf_\vr^\vr\\
						 &= {\left (\Tr\varphi\right )_\vr^\vr}^*\gf_\vr^\vr\\
						 &= \Tr\varphi^*\gf_\vr^\vr
	}{
		\Fb^*\varthetab  &= {\Fb_\vr^\vr}^*\varthetab\\
						 &= {\left (\Tr\varphi\right )_\vr^\vr}^*\varthetab\\
						 &= \Tr\varphi^*\varthetab
	}

Secondly, the first result of the theorem is implied by the last one through the fundamental theorem on homomorphisms. Therefore, only the later needs to be proven. Let $\Fb$ and $\Fb'$ be two elements of $\conf{\Tr\B}{\Tr\E}$ and $\widehat\Fb = \Fb'\cdot\Fb^{-1} : \Imr\left (\Fb\right )\longto \Imr\left (\Fb'\right )$. Let $\tmp : \ifun{\E\times_\EE\Tr\EE}{\Tr\E}{(x,\ub)}{\tmp_x\cdot\ub}$ be smooth, then:\\

\algn{
	\widehat{\Fb}^*\tmp = \at{\tmp}_{\Imr\left (\Fb\right )}%
									 &\iff& &\forall x \in \Imr\left (\Fb\right ),& 	\left (\widehat{\Fb}_x\right )^{-1}\cdot\tmp_{\widehat{\varphi}\left (x\right )}\cdot\ov{\widehat{\Fb}}_{\ov x} &= \tmp_{x}\\
									 &\iff& &\forall X \in \B,& 	\left ({\widehat{\Fb}_{\varphi(X)}}\right )^{-1}\cdot\tmp_{\widehat{\varphi}\left (\varphi(X)\right )}\cdot\ov{\widehat{\Fb}}_{\ov{\varphi}(\ov X)} &= \tmp_{\varphi(X)}\\
							 		 &\iff& &\forall X \in \B,&		\Fb_X\cdot{\Fb'_X}^{-1}\cdot\tmp_{\varphi'(X)}\cdot{\ov{\Fb}}_{\ov X}'\cdot{\ov{\Fb}_{\ov X}}^{-1} &= \tmp_{\varphi(X)}\\
									 &\iff& &\forall X \in \B,&		{\Fb'_X}^{-1}\cdot\tmp_{\varphi'(X)}\cdot{\ov{\Fb}}_{\ov X}' &= {\Fb_X}^{-1}\cdot\tmp_{\varphi(X)}\cdot\ov{\Fb}_{\ov X}\\
									 &\iff& &\forall X \in \B,&		{\Fb'}^*\tmp &= \Fb^*\tmp
}

By setting $\tmp = \gammab$ then $\tmp = \varthetab$ and by noticing that the formula remains true in the particular $\Fb=\Tr\varphi$ and $\Fb'=\Tr\varphi'$ cases \partxt{and accordingly $\widehat{\Fb}=\Tr\widehat{\varphi}$}, one gets:

\byalgn{
	\widehat{\Fb}^*\gammab &= \gammab &\iff& &\Fb^*\gammab &= {\Fb'}^*\gammab\\
	\widehat{\Fb}^*\varthetab &= \varthetab &\iff& &\Fb^*\varthetab &= {\Fb'}^*\varthetab
}{
	\Tr\widehat{\varphi}^*\gammab &= \gammab &\iff& &{\Tr\varphi}^*\gammab &= {\Tr\varphi'}^*\gammab\\
	\Tr\widehat{\varphi}^*\varthetab &= \varthetab &\iff& &{\Tr\varphi}^*\varthetab &= {\Tr\varphi'}^*\varthetab
}

Furthermore, the pull-back operation is also a group action on metrics:

\algn{
	\widehat{\Fb}^*\gf_\vr^\vr = \gf_\vr^\vr%
									 &\iff& &\forall x \in \E,\qquad& \at{\widehat{\Fb}_\vr^\vr}_{x}^\star\cdot\at{\gb_\vr^\vr}_{\widehat{\varphi}\left (x\right )}\cdot\at{\widehat{\Fb}_\vr^\vr}_{x} &= \at{\gb_\vr^\vr}_x\\
									 &\iff& &\forall X \in \BB,& 	\at{\widehat{\Fb}_\vr^\vr}_{\varphi(X)}^\star\cdot\at{\gb_\vr^\vr}_{\widehat{\varphi}\left (\varphi(X)\right )}\cdot\at{\widehat{\Fb}_\vr^\vr}_{\varphi(X)} &= \at{\gb_\vr^\vr}_{\varphi(X)}\\
									 &\iff& &\forall X \in \BB,& 	\left ({\Fb_\vr^\vr}^{-1}\right )^\star\cdot{{\Fb'}_\vr^\vr}^\star\cdot\gb_\vr^\vr\cdot{\Fb'}_\vr^\vr\cdot{\Fb_\vr^\vr}^{-1} &= \at{\gb_\vr^\vr}_{\varphi(X)}\\
									 &\iff& &\forall X \in \BB,& 	\at{{\Fb'}_\vr^\vr}_{\varphi(X)}^\star\cdot\at{\gb_\vr^\vr}_{\varphi'\left (X\right )}\cdot\at{{\Fb'}_\vr^\vr}_{\varphi(X)} &= \at{\Fb_\vr^\vr}_{\varphi(X)}^\star\cdot\at{\gb_\vr^\vr}_{\varphi(X)}\cdot\at{\Fb_\vr^\vr}_{\varphi(X)}\\
									 &\iff& &\forall X \in \BB,&		{{\Fb'}_\vr^\vr}^*\gf_\vr^\vr &= {\Fb_\vr^\vr}^*\gf_\vr^\vr
}

Noticing that $\widehat{\Fb} \in \lpa{\Tr\E}{\Tr\E}$ and that $\widehat{\Fb}$ and $\widehat\varphi$ are invertible, one obtains the following chain of equivalences:

\algn{
						&&		\Orb{\Fb} &= \Orb{\Fb'}\\
						&\iff&  \widehat{\Fb} &\in \at{\Tr\Galf}_{\Imr\left (\Fb\right )}\\
						&\iff&	\left (\widehat{\Fb}^*\gf,~ \widehat{\Fb}^*\gammab,~ \Tr\widehat{\varphi}^*\gammab\right ) &=%
						  		\left (\gf,~ \gammab,~ \gammab\right )%
						   			&\text{\partxt{\cref{lemm:galf_as_invar}}}\\
						&\iff& 	\left (\left (\widehat{\Fb}^*\gf_\vr^\vr,~ \widehat{\Fb}^*\gammab - \widehat{\Fb}^*\varthetab\right ),~ \widehat{\Fb}^*\gammab,~ \Tr\widehat{\varphi}^*\gammab\right ) &=%
						   		\left (\left (\gf_\vr^\vr,~ \gammab-\varthetab\right ),~ \gammab,~ \gammab\right )%
						   			&\text{\partxt{\cref{th:data_equiv}}}\\
						&\iff& 	\left (\widehat{\Fb}^*\gf_\vr^\vr,~ \widehat{\Fb}^*\varthetab,~ \widehat{\Fb}^*\gammab,~ \Tr\widehat{\varphi}^*\gammab\right ) &=%
						   		\left (\gf_\vr^\vr,~ \varthetab,~ \gammab,~ \gammab\right )\\
						%
						%
						&\iff& 	\left (\Fb^*\gf_\vr^\vr,~ \Fb^*\varthetab,~ \Fb^*\gammab,~ \Tr\varphi^*\gammab\right ) &=%
						   		\left ({\Fb'}^*\gf_\vr^\vr,~ {\Fb'}^*\varthetab,~ {\Fb'}^*\gammab,~ \Tr{\varphi'}^*\gammab\right )\\
						&\iff&	\II\left (\Fb\right ) &= \II\left (\Fb'\right )
}

\end{proof}

Technically, the term "tensorial" is abusive as $\Fb^*\gammab$ and $\Tr\varphi^*\gammab$ are connections and hence not tensors. However, the difference of two connections is a tensor. One can therefore replace $\II$ by $\II - \II_\mathrm{ref}$, where $\II_\mathrm{ref}$ is the value of the invariants in a reference placement map $\Fb_\mathrm{ref}$, and obtain tensors. This means that, in practice, a reference placement map is required. However, in order to keep the discussion clear, this reference placement map will be omitted and $\II$ used instead of $\II-\II_\mathrm{ref}$. For lack of a better word, the term "tensorial" shall be used, albeit abusively.\\

\begin{rema}[unbreakable]
	From the result of \cref{th:data_equiv}, one has that the data of the Cauchy-Green $\Gb : \Tr\BB \longto \Tr^\star\BB$ is contained in the data of $\Gf_\vr^\vr : \Vr\B \longto \Vr^\star\B$ \partxt{and hence also in $\Gf$}. It is therefore obtainable from the principal invariants of \cref{th:inv_orb}.
\end{rema}

\section{Discussions}
\label{sect:disc}
\subsection{On the values taken by \texorpdfstring{$\II$}{the principal tensorial invariants}}

As stated in \cref{th:inv_orb}, the principal invariants quadruplet $\II$ is complete, meaning it characterises the orbits. However, another important property for a set of invariants is whether or not the given choice is optimal, in the sense that no invariant is redundant. When such a property is verified, one says that the set is minimal. In the given case, the quadruplet is actually strongly minimal. That is, removing an invariant adds ambiguity not only to (at least) two orbits, but to all of them:

\begin{lemmatend}
\label{lemm_min}%
\renewcommand{\tmp}[2]{\II_{\sigma\left (#1\right )}\left (#2\right )}%
Let $\II \equiv \left (\II_1, \II_2, \II_3, \II_4\right )$ be the principal tensorial invariants quadruplet. Then \underline{$\II$ is strongly minimal}. That is, a given triplet of principal invariants characterises no orbit:\\

$\forall \sigma \in \Sf_4,~ \forall \Fb \in \conf{\Tr\B}{\Tr\E},~ \exists \Fb' \in \conf{\Tr\B}{\Tr\E},$
\algn{
	&&\left (\tmp1\Fb,~\tmp2\Fb,~\tmp3\Fb\right ) &= \left (\tmp1{\Fb'},~\tmp2{\Fb'},~\tmp3{\Fb'}\right )\\
	&\text{and}& \tmp4\Fb &\neq \tmp4{\Fb'}
}

\startproof

\renewcommand{\tmp}{{\Fb'}}
	In each case, $\Fb'$ shall be expressed as $\Ab\circ\Fb$ with $\Ab \in \lpa{\Tr\E}{\Tr\E}$ a differential embedding (see \cref{def:conf}) with punctual shadow $\ab: \E \longto \E$ a differential embedding too. This means that $\Fb'=\Ab\circ\Fb$ will automatically be an element of $\conf{\Tr\B}{\Tr\E}$. There are four distinct cases, depending on the choice of $\sigma(4)$:
	\begin{enumerate}
		\it if $\II_{\sigma(4)} = {\ov\Fb}^*\gf_\vr^\vr$ then a uniform total dilation by $\lambda \in \RR^*_+$ suffices:
		\algn{
			\Ab &: \fun{\Tr\E}{\Tr\E}%
				{\bmat{\bmat{\ov x\\ y}\\\bmat{\delta \ov x\\\delta y}}}%
				{\bmat{\bmat{\lambda\cdot\ov x\\ \lambda\cdot y}\\\bmat{\lambda\cdot\delta \ov x\\\lambda\cdot\delta y}}}
		}
		Indeed, one then has:
		\byalgn{
			{\tmp}^*\gf_\vr^\vr 	&= \left (\lambda\cdot\Fb_\vr^\vr\right )^\star\cdot\gf_\vr^\vr\cdot\left (\lambda \cdot \Fb_\vr^\vr\right )\\
									&= \lambda^2\cdot{\Fb_\vr^\vr}^\star\cdot\gf_\vr^\vr\cdot{\Fb_\vr^\vr}\\
							   		&= \lambda^2\cdot{\Fb}^*\gf_\vr^\vr\\
			\tmp^*\varthetab	 	&= \left (\lambda\cdot\Fb\right )^{-1}\cdot\varthetab\cdot\left (\lambda \cdot \ov\Fb\right )\\
									&= \Fb^{-1}\cdot\varthetab\cdot\ov\Fb\\
									&= \Fb^*\varthetab
		}{
			\tmp^*\gammab 			&= \left (\lambda\cdot\Fb\right )^{-1}\cdot\gammab\cdot\left (\lambda \cdot \ov\Fb\right )\\
									&= \Fb^{-1}\cdot\gammab\cdot\ov\Fb\\
									&= \Fb^*\gammab\\
			{\Tr\varphi'}^*\gammab 	&= \left (\lambda\cdot\Tr\varphi\right )^{-1}\cdot\gammab\cdot\left (\lambda \cdot \Tr\ov\varphi\right )\\
									&= {\Tr\varphi}^{-1}\cdot\gammab\cdot\Tr\ov\varphi\\
									&= {\Tr\varphi}^*\gammab
		}

		\it if $\II_{\sigma(4)} = \Fb^*\varthetab$ then a purely macroscopic non-trivial rotation $\Rb \in \O(3)$ suffices:
		\algn{
			\Ab &: \fun{\Tr\E}{\Tr\E}%
				{\bmat{\bmat{\ov x\\ y}\\\bmat{\delta \ov x\\\delta y}}}%
				{\bmat{\bmat{\Rb\cdot\ov x\\ y}\\\bmat{\Rb\cdot\delta \ov x\\\delta y}}}
		}
		Indeed, one then has:
		\byalgn{
			{\tmp}^*\gf_\vr^\vr 	&= {\tmp_\vr^\vr}^\star\cdot\gf_\vr^\vr\cdot\tmp_\vr^\vr\\
									&= {\Fb_\vr^\vr}^\star\cdot\gf_\vr^\vr\cdot{\Fb_\vr^\vr}\\
									&= {\Fb}^*\gf_\vr^\vr\\
			\tmp^*\varthetab	 	&= {\Fb_\vr^\vr}^{-1}\cdot\varthetab\cdot\Rb\cdot\ov\Fb\\
									&\neq \Fb^{-1}\cdot\varthetab\cdot\ov\Fb\\
									&\neq \Fb^*\varthetab 
		}{
			\tmp^*\gammab 			&= \left [\Id-{\tmp_\vr^\vr}^{-1}\cdot\tmp_\hr^\vr\right ]\cdot\Gammabref\\
									&= \left [\Id-{\Fb_\vr^\vr}^{-1}\cdot\Fb_\hr^\vr\right ]\cdot\Gammabref\\
									&= \Fb^*\gammab\\
			{\Tr\varphi'}^*\gammab 	&= \left [\Id-{\left (\Tr\varphi'\right )_\vr^\vr}^{-1}\cdot{\Tr\varphi'}_\hr^\vr\right ]\cdot\Gammabref\\
									&= \left [\Id-{\left (\Tr\varphi\right )_\vr^\vr}^{-1}\cdot{\Tr\varphi}_\hr^\vr\right ]\cdot\Gammabref\\
									&= {\Tr\varphi}^*\gammab
		}

		\it if $\II_{\sigma(4)} = \Fb^*\gammab$ then one can choose $\tmp_\hr^\vr$ freely as it does not appear in $\ov\tmp^*\gb$, $\tmp^*\varthetab$ nor ${\Tr\varphi'}^*\gammab$. For example, setting $\tmp_\hr^\vr=\Fb_\vr^\vr\cdot\Rb$ for $\Rb \in \O(3)$ and keeping everything else the same suffices. Indeed, it yields $\tmp^*\gammab = \left (\Id-\Rb\right )\cdot\Gammabref$ which cannot be equal to $\Fb^*\gammab$ for all $\Rb$. One can verify that the $\tmp$ defined this way is still in $\conf{\Tr\B}{\Tr\E}$.

		\it if $\II_{\sigma(4)} = {\Tr\varphi'}^*\gammab$ then a total transformation obtained from a non-uniform (differential of a) microscopic translation $\tb^\vr$ suffices:
		\algn{
			\Ab &: \fun{\Tr\E}{\Tr\E}%
				{\bmat{\bmat{\ov x\\ y}\\\bmat{\delta \ov x\\\delta y}}}%
				{\bmat{\bmat{\ov x\\ y + \tb^\vr(\ov x)}\\\bmat{\delta \ov x\\\ider{\ov x}{\tb^\vr(\ov x)}\cdot\delta \ov x + \delta y}}}
		}
		
		Indeed, only ${\Tr\varphi'}_\hr^\vr$ is change and, similarly to the case of $\Fb^*\gammab$, this term does not appear in the formulae of $\ov\tmp^*\gb$, $\tmp^*\varthetab$ and $\tmp^*\gammab$. For ${\Tr\varphi'}_\hr^\vr$ one has:
		\algn{
			{\Tr\varphi'}^*\gammab 	&= \left [\Id-{{\Tr\varphi'}_\vr^\vr}^{-1}\cdot{\Tr\varphi'}_\hr^\vr\right ]\cdot\Gammabref\\
									&= \left [\Id-{{\Tr\varphi}_\vr^\vr}^{-1}\cdot{{\Tr\ab}_\vr^\vr}^{-1}\cdot\left [{\Tr\ab}_\hr^\vr\cdot{\Tr\varphi}_\hr^\hr +{\Tr\ab}_\vr^\vr\cdot{\Tr\varphi}_\hr^\vr\right ]\right ]\cdot\Gammabref\\
									&= \left [\Id%
									- {{\Tr\varphi}_\vr^\vr}^{-1}\cdot\ider{\ov x}{\tb^\vr(\ov x)}\cdot{\Tr\varphi}_\hr^\hr%
									- {{\Tr\varphi}_\vr^\vr}^{-1}\cdot{\Tr\varphi}_\hr^\vr\right ]\cdot\Gammabref\\
									&= {\Tr\varphi}^*\gammab - {{\Tr\varphi}_\vr^\vr}^{-1}\cdot\ider{\ov x}{\tb^\vr(\ov x)}\cdot{\Tr\varphi}_\hr^\hr\cdot\Gammabref
		}
		which differs from $\Tr\varphi^*\gammab$ as soon as $\tb^\vr$ is non-uniform.
	\end{enumerate}
\end{lemmatend}

Although this result may seem to close the subject of minimality, it does not as it treats tensors as indivisible blocks. Those tensors however may be split into smaller invariants, for example scalar ones. This results says nothing about the minimality of such a set of smaller invariants. That is, even though the principal tensorial invariants are not redundant, they may "overlap" and have common sub-invariants. In particular, choosing the value of some of the principal tensorial invariants may restrict the set of possible values of the others.\\

This discussion relates to the fact that $\II$ is not surjective. In fact, determining $\Imr\left (\II\right )$ is a hard problem. The first major issue lies not in the linear part, but in its relation with the punctual transformation. Indeed, in \cref{def:lpa} one requires $\ov\Fb=\Tr\ov\varphi$ and $\Fb_\vr^\vr = \left (\Tr\varphi\right )_\vr^\vr$. These equations yield several scalar integrability conditions of the form $\op{curl}=0$ \partxt{one per line of the matrices $\ov\Fb$ and $\Fb_\vr^\vr$} along with Schwarz-like compatibility conditions, which make determining the set $\Imr\left (\II\right )$ a tedious problem. Lastly, the second major issue comes from the additional requirements in \cref{def:conf} requiring $\Fb$ and $\varphi$ to be differential embeddings. The interested reader may refer to the similar work of \fullciteauthor{le1996determination}, where the curvature-free case for the $\Fb=\Fb^\er\cdot\Fb^\pr$ model $-$ where the invariants are the elastic metric and the torsion of the connection $-$ has been done in the settings of large transformations.\\

\renewcommand{\tmp}[1]{\nu\left ({#1}\right )}
A heuristic approach using degrees of freedom may yield useful insight on the problem. Setting $n = \dim(\BB)$ and fixing $X_0 \in \B$ and $x_0 = \varphi(X_0) \in \E$, the degrees of freedom of $\II(\Fb)$ and $\Orb{\Fb}$ verify (due to $\II$ being injective):
\algn{
	&\tmp{\Orb{\Fb}}& &=& &\tmp{\II(\Fb)}& &\le& &\tmp{\Gf_\vr^\vr} + \tmp{\Gammab} + \tmp{\Tr\varphi^*\gammab} + \tmp{\Thetab}&
}
\algn{
	\tmp{\Orb{\Fb}}
			  &= \tmp{\Fb} - \tmp{\Tr_x\Galf}\\
			  &=  \tmp{\Fb_\hr^\hr} + \tmp{\Fb_\hr^\vr} + \tmp{\Fb_\vr^\vr}
			  + \tmp{\Tr\varphi_\hr^\vr} - \tmp{\O(3)}\\
			  &= 3\times n + 3\times n + 3\times3 + 3\times n - 3 &\text{for $X_0$ and $x_0$ fixed}\\
			  &= 9n + 6\\
\\
	\tmp{\Gf_\vr^\vr} + \tmp{\Gammab} + \tmp{\Tr\varphi^*\gammab} + \tmp{\Thetab}\hspace{-30pt}&\\
			  &= \frac{3\times(3+1)}2 + 3\times n + 3\times n + 3\times n &\text{for $X_0$ fixed}\\
			  &= 6 + 9n
}
The difference being zero, one heuristically has that, when $X_0$ and $x_0$ are fixed, the invariant can be considered independent. However, when considering the degrees of freedom for a varying $X \in \B$ and $x=\varphi(X)\in\E$, compatibility conditions arise as stated above. This means that the block of which $\Fb$ is composed are not independent fields, meaning $\tmp{\Orb{\Fb}}$ is lower that this heuristic suggests. Since $\II$ is a bijection, this means the different invariants must also have compatibility conditions, in turn reducing $\tmp{\II(\Fb)}$. Beware that these heuristics only give an initial idea in order to help visualise the issue. Nevertheless, all this discussion seems to converge to the statement that frame invariance is probably expressible in term of smaller, less interdependent, invariants. A recent work of \citeauthor{tamarasselvame2011form} \cite[313]{tamarasselvame2011form} goes in that direction. By carefully using the related notion of form-invariance, they were able to determine that a form-invariant Lagrangian depending on a metric and a (compatible) connection must be expressible as a functional of the metric, torsion and curvature alone.

\VOID{
Even thought, in general, the problem is not yet solved, in the unidimensional case most integrability conditions are trivial and one has the following result:

\begin{lemm}
\TODO[cas ${\dim(\BB)=1}$]{
	Dans le cas $\dim(\BB)=1$, c'est à dire la poutre, la seule restriction pour être dans l'image est $\opp{curl}{\Thetab_x\cdot\delta\Sb_\ov x} = 0$ où $\delta\Sb_\ov x \in \Tr_\ov x \BB$ est quelconque et $\Thetab_x\cdot\delta\Sb_\ov x$ est vu comme un champ vectoriel sur $\B_\ov x$ \partxt{à valeur dans $\Vr_{\B_\ov x}\B = \Tr\B_\ov x$}. L'intégrabilité de $\ov\Fb$ et $\Fb_\vr^\vr\cdot\left (\hr_\Gammabref-\hr_{\Gammab_\mathrm{holo}}\right )$ est quand à elle directe du fait de la dimension $1$.
}
\end{lemm}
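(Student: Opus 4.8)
The statement characterises the image $\Imr(\II)$ in the beam case $\dim(\BB)=1$, so the plan is to solve the reconstruction problem directly: given a candidate quadruplet $\left(\Mf,\Thetab,\Gammab,\Gammab_\mathrm{holo}\right)$ in $\S_\vr^{++}\left(\B\right)\times\opp{Sold}{\B}\times\opp{Con}{\B}\times\opp{Con}{\B}$ satisfying the pointwise (jet-level) compatibilities already forced by \cref{th:inv_orb} and the degree-of-freedom count, I would exhibit an $\Fb\in\conf{\Tr\B}{\Tr\E}$ with $\II\left(\Fb\right)$ equal to it, and conversely read off the residual obstruction. The reconstruction is carried out block by block in the holonomic ambient coordinates of \cref{sect:pull_back}, where $\gammab\equiv\bmat{\Id\\\bold0}$, $\varthetab\equiv\bmat{\bold0\\\Id}$ and $\gf\equiv\bmat{\Id&\Id\\\Id&\Id}$ take their explicit Euclidean forms.

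First I would recover the micro-linear part $\Fb_\vr^\vr=\ell^\vr$ from the micro-metric by solving ${\ell^\vr}^\star\cdot\ell^\vr=\Mf$ pointwise in $\ov X$, which fixes $\ell^\vr$ up to a pointwise rotation $\Ob\left(\ov X\right)\in\O(3)$ — exactly the gauge freedom quotiented out by $\Tr\Galf$. Since the relation $\Thetab={\Fb_\vr^\vr}^{-1}\cdot\ov\Fb$ holds in these coordinates (from \cref{def:Thetab} with $\varthetab\equiv\bmat{\bold0\\\Id}$), it immediately yields $\ov\Fb=\ell^\vr\cdot\Thetab$; the macroscopic embedding $\ov\varphi$ is then recovered by integrating $\ov\Fb=\Tr\ov\varphi$ along the one-dimensional base, which is unobstructed because a single-variable primitive always exists. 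This is the assertion that the integrability of $\ov\Fb$ is direct in dimension $1$. In the same way I would read off $\Fb_\hr^\vr=\Fb_\vr^\vr\cdot\left(\hr_{\Gammabref}-\hr_\Gammab\right)$ from $\Gammab$, and integrate the holonomic coupling $\left(\Tr\varphi\right)_\hr^\vr=\Fb_\vr^\vr\cdot\left(\hr_{\Gammabref}-\hr_{\Gammab_\mathrm{holo}}\right)$ in the single base variable to produce $\varphi^\vr$ and $t^\vr$; this is again a one-variable primitive, hence automatic, which is the second ``direct in dimension $1$'' claim.

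The one condition that survives ties the reconstructed macroscopic embedding to the micro-structure through the solder form. The plan is to view $\Thetab_X\cdot\delta\Sb$, for the essentially unique base direction $\delta\Sb\in\Tr_\ov X\BB$, as a vector field on the three-dimensional fibre $\B_\ov X$ under $\Vr_X\B\simeq\Tr_X\B_\ov X$, and to show that the existence of a genuine punctual map compatible with this solder datum is equivalent to that field being irrotational, $\opp{curl}{\Thetab_X\cdot\delta\Sb}=0$. Concretely, I would write the Poincaré/Schwarz closedness of the one-form encoding $\Thetab\cdot\delta\Sb$ along the fibre: the symmetric part of its fibre-gradient is matched by the micro-metric $\Mf$ and the coupling data already fixed above, whereas the antisymmetric part is precisely the curl and is the sole component left free by the other invariants. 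Substituting the reconstructed blocks back into \cref{lemm:Gf} and \cref{th:data_equiv} and checking that $\II\left(\Fb\right)$ returns the prescribed quadruplet would close the converse direction.

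The hard part will be the last paragraph: cleanly isolating the curl as the unique obstruction requires disentangling which components of the fibre-gradient of $\Thetab\cdot\delta\Sb$ are forced by the already-reconstructed $\Mf$, $\Gammab$ and $\Gammab_\mathrm{holo}$ — and are therefore not free — from the genuinely free antisymmetric component, and then confirming that the curl-free condition actually suffices to build an $\Fb$ together with a shadow $\varphi$ meeting the \emph{differential embedding} requirement of \cref{def:conf}, rather than merely a formal first-order datum. The embedding condition (injective differential and non-interpenetration) is the delicate, non-generic point, since the algebraic reconstruction guarantees an injective differential only pointwise; promoting this to a global embedding along the beam is where the one-dimensionality of $\BB$ must be invoked once more.
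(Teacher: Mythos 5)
You cannot be compared against the paper here, because the paper has no proof of this statement: in the source the lemma sits inside a \texttt{VOID} block, is flagged as a \texttt{TODO} (``cas $\dim(\BB)=1$''), and the accompanying \texttt{proof} environment is empty. The claim was left as an unproven draft note by the authors, so your proposal can only be assessed on its own merits.

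On those merits, the easy halves of your plan are right and are surely what was intended: recovering $\Fb_\vr^\vr$ from $\Gf_\vr^\vr$ up to a pointwise rotation $\Ob\left (\ov X\right )$ is the same polar-decomposition device the paper uses in \cref{lemm_purely_vec}, and the two integrations of $\ov\Fb = \Fb_\vr^\vr\cdot\Thetab$ and of the constant-in-$Y$ part of $\Fb_\vr^\vr\cdot\left (\hr_\Gammabref-\hr_{\Gammab_\mathrm{holo}}\right )$ are indeed one-variable primitives, unobstructed over a $1$-dimensional base. But the decisive step --- that the fibrewise curl of $\Thetab\cdot\delta\Sb$ is the \emph{sole} surviving obstruction --- is exactly the step you defer to ``the hard part'', so the proposal is a programme rather than a proof; and your reading of that step is in tension with the paper itself. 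In the paragraph preceding \cref{coro:data_equiv}, the paper observes that for \emph{any} $\Fb \in \conf{\Tr\B}{\Tr\E}$ the material solder form $\Thetab = {\Fb_\vr^\vr}^{-1}\cdot\varthetab\cdot\ov\Fb$ is \emph{constant} in the vertical punctual coordinate, because $\Fb_\vr^\vr$ and $\ov\Fb = \Tr\ov\varphi$ depend on $\ov X$ alone. Realizability therefore kills the entire $Y$-linear part of a candidate solder form, whereas your closedness argument (antisymmetric part of the fibre-gradient equals the curl) only kills its antisymmetric component: a candidate $\Thetab$ affine in $Y$ with a nonzero \emph{symmetric} linear part is fibrewise curl-free yet cannot lie in $\Imr\left (\II\right )$. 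So either the curl in the voided statement must be construed differently from how you (quite reasonably) read it --- e.g.\ as a condition mixing base derivatives, where the genuine residual compatibility lives between $\Gf_\vr^\vr$ and $\Gammab_\mathrm{holo}$ through the gauge ODE determining $\Ob\left (\ov X\right )$ along the beam --- or the statement itself is imprecise, which its \texttt{TODO} status makes plausible. Your closing caution about the differential-embedding requirement is comparatively minor: nonvanishing $\ov\Fb$ gives an immersion of a curve, and injectivity is a separate global matter, but it does not interact with the curl condition.
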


\begin{proof}
	
\end{proof}
}

\subsection{On the dependencies on the invariants}

As stated in \cref{frame_inv_intro}, we are interested in frame invariant functions of the placement map, which will be the properties of the material:

\renewcommand{\tmp}{{\omega}}
\begin{defi}{Frame invariant function}{fram_inv_fun}
	Let $\S$ be a set. A function $\tmp : \conf{\Tr\B}{\Tr\E} \longto \S$ is said to be \underline{frame invariant} if and only if:
	\quant{\forall \Fb \in \conf{\Tr\B}{\Tr\E},~\forall \Of \in \Tr\Galf,}{\tmp\left (\Fb\right ) &= \tmp\left (\Of \circ \Fb\right )}
\end{defi}

The set $\S$ is voluntarily generic and may take any mathematical form. In practice, however, it will most often be the set of reals \partxt{\eg{} $\tmp$ gives the total energy, the mass, the volume, etc.}, scalar fields \partxt{\eg{} $\tmp$ gives the energy field, the local dilatation, etc.} or tensor fields \partxt{\eg{} $\tmp$ gives the curvature or torsion fields}. Notice that, by construction, $\II$ is frame invariant with $\S \supset \Imr\left (\II\right )$. In fact, as a direct corollary of \cref{th:inv_orb}, one has the following result:\\

\begin{lemm}
	\label{lemm_fact_fram_inv}
	Let $\S$ be a set and $\tmp : \conf{\Tr\B}{\Tr\E} \longto \S$. Then, $\tmp$ is frame invariant if and only if there exists a (unique) $\widehat{\tmp} : \Imr\left (\II\right ) \longto \S$ such that:
		\algn{
			\tmp &= \widehat{\tmp}\circ\II
		}
\end{lemm}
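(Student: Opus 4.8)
The plan is to recognise this statement as the universal property of factoring a map through a complete invariant, and to lean entirely on \cref{th:inv_orb}, which established both that $\II$ is constant on the orbits of $\Tr\Galf \acts \conf{\Tr\B}{\Tr\E}$ and that it separates them \partxt{$\Orb{\Fb_1}=\Orb{\Fb_2}\iff\II(\Fb_1)=\II(\Fb_2)$}. I would treat the two implications separately, the second being the substantive one.

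For the \say{if} direction, I would assume $\omega = \widehat\omega \circ \II$ for some $\widehat\omega : \Imr\left(\II\right) \longto \S$ and verify frame invariance directly. Fixing $\Fb \in \conf{\Tr\B}{\Tr\E}$ and $\Of \in \Tr\Galf$, the element $\Of \circ \Fb$ lies in the orbit of $\Fb$, so $\Orb{\Of \circ \Fb} = \Orb{\Fb}$; \cref{th:inv_orb} then gives $\II(\Of \circ \Fb) = \II(\Fb)$, whence $\omega(\Of \circ \Fb) = \widehat\omega\left(\II(\Of \circ \Fb)\right) = \widehat\omega\left(\II(\Fb)\right) = \omega(\Fb)$, which is exactly frame invariance.

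For the \say{only if} direction I would construct $\widehat\omega$ by choosing, for each $i \in \Imr\left(\II\right)$, some preimage $\Fb$ with $\II(\Fb) = i$ and setting $\widehat\omega(i) := \omega(\Fb)$. The crux — and the only place the hypothesis and the completeness of $\II$ are genuinely used — is well-definedness: if $\II(\Fb_1) = \II(\Fb_2)$, then by \cref{th:inv_orb} one has $\Orb{\Fb_1} = \Orb{\Fb_2}$, so there is some $\Of \in \Tr\Galf$ with $\Fb_2 = \Of \circ \Fb_1$, and frame invariance of $\omega$ yields $\omega(\Fb_2) = \omega(\Of \circ \Fb_1) = \omega(\Fb_1)$. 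Thus $\widehat\omega(i)$ does not depend on the chosen representative, and $\omega = \widehat\omega \circ \II$ holds by construction.

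Uniqueness is then immediate from the surjectivity of $\II$ onto $\Imr\left(\II\right)$: any $\widehat\omega$ satisfying $\omega = \widehat\omega \circ \II$ must send $\II(\Fb)$ to $\omega(\Fb)$, which pins down its value at every point of $\Imr\left(\II\right)$. I do not anticipate any real obstacle, since all the content has been front-loaded into \cref{th:inv_orb}; the only step requiring care is the well-definedness of $\widehat\omega$, which is precisely where the separation of orbits by $\II$ is essential.
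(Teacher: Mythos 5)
Your proof is correct and follows exactly the route the paper intends: the paper states this lemma without proof as a \say{direct corollary} of \cref{th:inv_orb}, whose completeness of $\II$ (via the induced bijection $\faktor{\conf{\Tr\B}{\Tr\E}}{\Tr\Galf} \simeq \Imr\left(\II\right)$) is precisely what your well-definedness argument makes explicit. Nothing is missing; the spelled-out verification of both directions and of uniqueness is exactly the standard factorisation-through-a-complete-invariant argument the paper relies on.
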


Well-defined material properties are, as discussed in \cref{frame_inv_intro}, exactly those frame invariant functions. The last results then states that being frame invariant is equivalent to depending on $\II$. However, a property rarely depends on all four invariants. One can then wonders if the dependency on a given subset of $\II$ can be reformulated in term of a certain invariance. This question relates to the meaning of each of the four principal invariants.\\

Two of the four components of $\II$ are connections: $\Fb^*\gammab$ and $\Tr\varphi^*\gammab$. One may therefore wonder how exactly those two differ. This can be done by analysing what it means for a property to depend explicitly one one and not the other. First, having a dependency on $\Fb^*\gammab$ which can be simplified out, relates to the notion of holonomic dependency:

\begin{defi}{Holonomic dependency}{hol_dep}
	Let $\S$ be a set. A function $\tmp : \conf{\Tr\B}{\Tr\E} \longto \S$ is said to have a \underline{purely holonomic dependency} if and only if:
	\quant{\forall \Fb \in \conf{\Tr\B}{\Tr\E},}{\tmp\left (\Fb\right ) &= \tmp\left (\Tr\varphi\right )}
	where $\varphi : \B \longto \E$ implicitly refers to the shadow of $\Fb$ for the punctual projections.\\
\end{defi}
\begin{lemm}
	A function $\tmp : \conf{\Tr\B}{\Tr\E} \longto \S$ has a purely holonomic dependency if and only if there exists a (unique) $\widehat{\tmp}_\mathrm{holo}$ such that:
		\quant{\forall \Fb \in \conf{\Tr\B}{\Tr\E},}{
			\tmp(\Fb) &= \widehat{\tmp}_\mathrm{holo}\left(\Tr\varphi^*\gf_\vr^\vr,~\Tr\varphi^*\varthetab,~\Tr\varphi^*\gammab\right )
		}
\end{lemm}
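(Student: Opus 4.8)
The plan is to deduce the statement from two earlier results: the factorisation of frame invariant functions through $\II$ (\cref{lemm_fact_fram_inv}) and the completeness of $\II$ (\cref{th:inv_orb}). Since the $\omega$ of interest are frame invariant material properties (this being exactly the setting of \cref{lemm_fact_fram_inv}), that lemma already provides a unique $\widehat\omega : \Imr\left(\II\right) \longto \S$ with $\omega = \widehat\omega\circ\II$, and I would phrase the whole argument through this $\widehat\omega$.

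The one structural fact I would isolate first is that the triple $\left(\Tr\varphi^*\gf_\vr^\vr,~\Tr\varphi^*\varthetab,~\Tr\varphi^*\gammab\right)$ is exactly $\II$ read off the holonomic lift $\Tr\varphi$. Indeed, as recorded at the start of the proof of \cref{th:inv_orb}, the first, second and fourth components of $\II\left(\Fb\right)$ depend on $\Fb$ only through its punctual shadow $\varphi$, so only the third component $\Fb^*\gammab$ sees the full $\Fb$. Consequently, evaluating at the holonomic map $\Fb=\Tr\varphi$ makes the third invariant collapse onto the fourth, giving $\II\left(\Tr\varphi\right) = \left(\Tr\varphi^*\gf_\vr^\vr,~\Tr\varphi^*\varthetab,~\Tr\varphi^*\gammab,~\Tr\varphi^*\gammab\right)$. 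I would also note that $\Tr\varphi \in \conf{\Tr\B}{\Tr\E}$ whenever $\varphi$ is a punctual placement, so that $\omega\left(\Tr\varphi\right)$ is meaningful.

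For the backward implication, if $\omega = \widehat\omega_\mathrm{holo}\circ\left(\Tr\varphi^*\gf_\vr^\vr,~\Tr\varphi^*\varthetab,~\Tr\varphi^*\gammab\right)$ then the argument depends only on $\varphi$; since $\Tr\varphi$ has the very same punctual shadow $\varphi$, the right-hand side takes the same value at $\Fb$ and at $\Tr\varphi$, which is exactly $\omega\left(\Fb\right) = \omega\left(\Tr\varphi\right)$, the purely holonomic dependency of \cref{def:hol_dep}. For the forward implication I would simply set $\widehat\omega_\mathrm{holo}\left(a,b,c\right) := \widehat\omega\left(a,b,c,c\right)$. By the structural fact the point $\left(a,b,c,c\right)$ attached to the triple of any $\Tr\varphi$ equals $\II\left(\Tr\varphi\right) \in \Imr\left(\II\right)$, so $\widehat\omega_\mathrm{holo}$ is defined there; and then $\widehat\omega_\mathrm{holo}\left(\Tr\varphi^*\gf_\vr^\vr,~\Tr\varphi^*\varthetab,~\Tr\varphi^*\gammab\right) = \widehat\omega\left(\II\left(\Tr\varphi\right)\right) = \omega\left(\Tr\varphi\right) = \omega\left(\Fb\right)$, using purely holonomic dependency in the last step. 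Uniqueness on the image of the triple is then automatic, exactly as in \cref{lemm_fact_fram_inv}.

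The delicate point is not computational but conceptual: recognising that the three ``holonomic'' arguments are precisely $\II$ evaluated on $\Tr\varphi$, with the third and fourth invariants coinciding there, so that $\left(a,b,c,c\right)$ genuinely lands in $\Imr\left(\II\right)$ and the completeness of $\II$ can be invoked. Equivalently, the only place frame invariance is truly needed is upstream, inside \cref{lemm_fact_fram_inv}; once $\widehat\omega$ is in hand, the collapse of the two connection-invariants on holonomic maps does all the remaining work.
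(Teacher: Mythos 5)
Your proof is correct and is precisely the argument the paper intends: this lemma is stated without its own proof, as an immediate consequence of \cref{lemm_fact_fram_inv} and \cref{th:inv_orb}, and your two ingredients — the identity $\II\left(\Tr\varphi\right) = \left(\Tr\varphi^*\gf_\vr^\vr,~\Tr\varphi^*\varthetab,~\Tr\varphi^*\gammab,~\Tr\varphi^*\gammab\right)$ obtained by collapsing the third invariant onto the fourth at $\Fb = \Tr\varphi$, and the definition $\widehat{\omega}_\mathrm{holo}\left(a,b,c\right) := \widehat{\omega}\left(a,b,c,c\right)$ — are exactly that consequence made explicit. You were also right to state up front that $\omega$ is frame invariant: this hypothesis is not in the literal wording of the lemma but is the standing assumption of the section (the paper's proof of the companion \cref{lemm_purely_vec} likewise assumes invariance under $\Tr\Galf$), and it is genuinely needed, since a function such as $\Fb \longmapsto \varphi\left(X_0\right)$ for a fixed $X_0 \in \B$ has purely holonomic dependency yet cannot factor through the triple, the triple being constant on $\Tr\Galf$-orbits while $\varphi\left(X_0\right)$ is not.
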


One can interpret this results as stating that the dependency of $\tmp$ in $\Fb^*\gammab$ can be simplified iff $\tmp$ is blind towards the non-holonomy of $\Fb$. Or, in other words, that $\Fb^*\gammab$ captures this non-holonomy. One the other hand, one has that having a dependency in $\Tr\varphi^*\gammab$ which can be simplified out, relates to the notion of vectorial dependency:\\

\begin{defi}{$\op{Mat}$ operator and vectoriality}{mat}
	Let $\Fb^\mathrm{ref} \in \conf{\Tr\B}{\Tr\E}$ be a chosen placement with punctual shadow $\varphi_\mathrm{ref} : \B \longto \E$. Let $\Fb \in \conf{\Tr\B}{\Tr\E}$. The \underline{matrix of $\Fb$ in the reference $\Fb^\mathrm{ref}$} at $X \in \B$, denoted $\at{\opp[\Fb^\mathrm{ref}]{Mat}\Fb}_X$, is the matrix of $\Fb_X\cdot\left (\Fb^\mathrm{ref}\right )^{-1}_{\varphi^\mathrm{ref}(X)}$ in the holonomic frames on $\Tr_{\varphi^\mathrm{ref}(X)}\E$ and $\Tr_{\varphi(X)}\E$. Or, in a more concise way:
	\algn{
		\op[\Fb^\mathrm{ref}]{Mat} : \fun{\conf{\Tr\B}{\Tr\E}}{\smooth{\B}{\GL_{\dim(\B)}(\RR)}}{\Fb}%
		{%
			X \mapsto \opp[\mathrm{holo}]{Mat}{\Fb_X\cdot\left (\Fb^\mathrm{ref}\right )^{-1}_{\varphi^\mathrm{ref}(X)}}%
		}
	}
	
\hrulefill\\

	Let $\S$ be a set. An application $\tmp : \conf{\Tr\B}{\Tr\E} \longto \S$ is said to have a \underline{purely vectorial dependency} if and only if there exist $\tmp_\mathrm{vec} : \smooth{\B}{\GL_{\dim(\B)}(\RR)} \longto \S$ and $\Fb^\mathrm{ref} \in \conf{\Tr\B}{\Tr\E}$ such that $\tmp = \tmp_\mathrm{vec}\circ\op[\Fb^\mathrm{ref}]{Mat}$.
\end{defi}

\begin{lemmatend}
	\label{lemm_purely_vec}
	Let $\S$ be a set. An application $\tmp : \conf{\Tr\B}{\Tr\E} \longto \S$ has a purely vectorial dependency if and only if there exists a (unique) $\widehat{\tmp}_\mathrm{vec}$ such that:
			\quant{\forall \Fb \in \conf{\Tr\B}{\Tr\E},}{
				\tmp(\Fb) &=& \widehat{\tmp}_\mathrm{vec}\left (\Fb^*\gf_\vr^\vr,~\Fb^*\gammab,~\Fb^*\varthetab\right )
			}
\startproof
\renewcommand{\tmp}{{\omega}}

	The first result comes directly from \cref{th:inv_orb} and the fundamental theorem on homomorphisms. For the second result, fixing $X \in \B$ and committing it for brevity, let
		\algn{
			\opp[\Fb^\mathrm{ref}]{Mat}{\Fb} &:= \bmat{\opp[\Fb^\mathrm{ref}]{Mat}{\Fb_\hr^\hr}&\bold 0\\\opp[\Fb^\mathrm{ref}]{Mat}{\Fb_\hr^\vr}&\opp[\Fb^\mathrm{ref}]{Mat}{\Fb_\vr^\vr}}
		}
	then
	\algn{
		\opp[\Fb^\mathrm{ref}]{Mat}{\Fb^*\gf_\vr^\vr} &= \opp[\Fb^\mathrm{ref}]{Mat}{\Fb_\vr^\vr}^\mathrm{t}\cdot\opp[\Fb^\mathrm{ref}]{Mat}{\Fb_\vr^\vr}
	}
	is invertible. Let $\Qb\cdot\Ub$ be the polar decomposition of $\opp[\Fb^\mathrm{ref}]{Mat}{\Fb_\vr^\vr}$, where $\Qb \in \O_3(\RR)$ and $\Ub \in \Sr\nr^{++}(\RR)$. Then
		\byalgn{
			\Ub &:= \sqrt{\opp[\Fb^\mathrm{ref}]{Mat}{\Fb^*\gf_\vr^\vr}}\\
				&= \Qb^{-1}\cdot \opp[\Fb^\mathrm{ref}]{Mat}{\Fb_\vr^\vr}\\
				&= \opp[\Fb^\mathrm{ref}]{Mat}{\left (\Ab\cdot\Fb\right )_\vr^\vr}
		}{
			\Ab : \fun{\Tr\E}{\Tr\E}%
				{\bmat{	\bmat{\ov x\\ y}\\	\bmat{\delta\ov x\\\delta y}}}%
				{\bmat{	\bmat{\Qb^{-1}\cdot\ov x + \ov \tb\\ \Qb^{-1}\cdot y+\tb^\vr}\\	\bmat{\Qb^{-1}\cdot\delta\ov x\\\Qb^{-1}\cdot\delta y}}}
		}
		where $\Ab \in \Tr\Galf$ and the values of $\ov\tb$ and $\tb^\vr$ are not important. One can then set:\\
		
		\algn{
			\opp[\Fb^\mathrm{ref}]{Mat}{\ov\Ab\ov\Fb} 
					&= \opp[\Fb^\mathrm{ref}]{Mat}{\ov\Ab\cdot\varthetab^{-1}\cdot\varthetab\cdot\ov\Fb}\\
					&= \opp[\Fb^\mathrm{ref}]{Mat}{\varthetab^{-1}\cdot\Ab\cdot\varthetab\cdot\ov\Fb}\\
					&= \opp[\Fb^\mathrm{ref}]{Mat}{\varthetab^{-1}\cdot\Ab\cdot\Fb_\vr^\vr\cdot\Fb^*\varthetab}\\
					&= \opp[\mathrm{holo}]{Mat}{\varthetab^{-1}}\cdot%
							\opp[\mathrm{holo}]{Mat}{\Ab\cdot\Fb_\vr^\vr}\cdot%
							\opp[\Fb^\mathrm{ref}]{Mat}{\Fb^*\varthetab}\\
					&= \opp[\mathrm{holo}]{Mat}{\varthetab^{-1}}\cdot%
							\opp[\Fb^\mathrm{ref}]{Mat}{\Ab\cdot\Fb_\vr^\vr}\cdot%
							\opp[\Fb^\mathrm{ref}]{Mat}{\Fb^\mathrm{ref}\cdot\Fb^*\varthetab}\\
			\\
			\opp[\Fb^\mathrm{ref}]{Mat}{\left (\Ab\cdot\Fb\right )_\hr^\vr}
					&= \opp[\Fb^\mathrm{ref}]{Mat}{\Ab\cdot\Fb_\vr^\vr\cdot{\Fb_\vr^\vr}^{-1}\cdot\Fb_\hr^\vr}\\
					&= \opp[\Fb^\mathrm{ref}]{Mat}{\Ab\cdot\Fb_\vr^\vr\cdot\left (\hr_\Gammabref-\hr_{\Fb^*\gammab}\right )}\\
					&= \opp[\mathrm{holo}]{Mat}{\Ab\cdot\Fb_\vr^\vr}\cdot
							\opp[\Fb^\mathrm{ref}]{Mat}{\hr_\Gammabref-\hr_{\Fb^*\gammab}}\\
					&= \opp[\Fb^\mathrm{ref}]{Mat}{\Ab\cdot\Fb_\vr^\vr}\cdot
							\opp[\Fb^\mathrm{ref}]{Mat}{\Fb^\mathrm{ref}\cdot\left (\hr_\Gammabref-\hr_{\Fb^*\gammab}\right )}
		}
		
		Where $\varthetab^{-1}_x\cdot\Ab = \ov\Ab\cdot\vartheta^{-1}_x$ $-$ true for any $\Ab \in \Tr\Galf$ $-$ has been used. This defines uniquely $\opp[\Fb^\mathrm{ref}]{Mat}{\Ab\cdot\Fb}$ in term of $\Fb^\mathrm{ref}$, $\varthetab$, $\Gammabref$, $\Fb^*\gf_\vr^\vr$, $\Fb^*\gammab$ and $\Fb^*\varthetab$. Using the fact that the first three are given, one has an application
		\[
			\upsilon : \left (\Fb^*\gf_\vr^\vr, \Fb^*\gammab, \Fb^*\varthetab\right ) \mapsto \opp[\Fb^\mathrm{ref}]{Mat}{\Ab\cdot\Fb}
		\]
		One then sets $\widehat{\tmp}_\mathrm{Mat} = \tmp_\mathrm{Mat}\circ\upsilon$, which is well defined since $\Ab \in \Tr\Galf$ and $\alpha$ is supposed to be invariant under $\Tr\Galf \acts \conf{\Tr\B}{\Tr\E}$.\\
		
		Reciprocally, if such a map $\tmp_\mathrm{Mat}$ exists then one needs to construct
		\[
			\eta : \opp[\Fb^\mathrm{ref}]{Mat}{\Fb} \mapsto \left (\Fb^*\gf_\vr^\vr, \Fb^*\gammab, \Fb^*\varthetab\right )
		\]
		First, let:
		\algn{
			\opp[\mathrm{holo}]{Mat}{{\gf_\vr^\vr}^\mathrm{ref}}
				&:= \opp[\Fb^\mathrm{ref}]{Mat}{\Fb_\vr^\vr}^{\mathrm{t}} \cdot \opp[\mathrm{holo}]{Mat}{\gf_\vr^\vr} \cdot \opp[\Fb^\mathrm{ref}]{Mat}{\Fb_\vr^\vr}\\
				&:= \opp[\mathrm{holo}]{Mat}{{{{\Fb_\vr^\vr}^\mathrm{ref}}^{-1}}^\star\cdot{{\Fb_\vr^\vr}^{-1}}^\star\cdot\gf_\vr^\vr\cdot\Fb_\vr^\vr\cdot{{\Fb_\vr^\vr}^\mathrm{ref}}^{-1}}\\
			\opp[\mathrm{holo}]{Mat}{\gammab^\mathrm{ref}}
				&:= \opp[\Fb^\mathrm{ref}]{Mat}{\Fb}^{-1} \cdot \opp[\mathrm{holo}]{Mat}{\gammab} \cdot \opp[\Fb^\mathrm{ref}]{Mat}{\Fb}\\
				&:= \opp[\mathrm{holo}]{Mat}{\Fb^\mathrm{ref}\cdot\Fb^{-1}\cdot\gammab\cdot\ov\Fb\cdot\ov\Fb^\mathrm{ref}}
		}
		Since $\Fb^\mathrm{ref}$ is fully known, this defines uniquely $\at{{\gf_\vr^\vr}^\mathrm{ref}}_{\varphi^\mathrm{ref}(X)}$ and $\at{\gammab^\mathrm{ref}}_{\varphi^\mathrm{ref}(X)}$. One then has ${\Gf_\vr^\vr}_X = {{\Fb_\vr^\vr}^\mathrm{ref}}^\star\cdot{\gb_\vr^\vr}^\mathrm{ref}\cdot{\Fb_\vr^\vr}^\mathrm{ref}$ and $\Gammab_X = {{\ov\Fb}^\mathrm{ref}}^{-1}\cdot\gammab^\mathrm{ref}\cdot\ov\Fb^\mathrm{ref}$. Using the same process with $\varthetab$ instead of $\gammab$ one obtains $\Thetab_X$. This gives $\eta$. One then concludes by setting $\tmp_\mathrm{Mat} = \widehat{\tmp}_\mathrm{Mat}\circ\eta$. Notice that $\Tr\varphi^*\gammab$ could not be obtained, since the matrix $\opp[\Fb^\mathrm{ref}]{Mat}{\left (\Tr\varphi\right )_\hr^\vr}$ is missing.
\end{lemmatend}

The proof of \cref{lemm_purely_vec} is relatively tedious but mostly relies on the polar decomposition and its inverse. Notice that the reference placement map $\Fb_\mathrm{ref}$ does not appear explicitly. One can interpret \cref{lemm_purely_vec} as saying that $\Tr\varphi^*\gammab$ encodes the data of the holonomic coupling $\left (\Tr\varphi\right )_\hr^\vr$ not present in the vectorial part of $\Fb$.\\

Regarding the role of the last invariants $\Gf_\vr^\vr=\Fb^*\gf_\vr^\vr$ and $\Thetab=\Fb^*\varthetab$, the proof of \cref{lemm_min} can be extended to prove that:
\begin{enumerate}
	\it the dependency in $\Gf_\vr^\vr$ can be simplified out, iff the functional is invariant under arbitrary synchronous uniform transformations $\bmat{\Ab&\bold 0\\\bold0 & \Ab} \in \Tr\aff{\E}{\E}$ of $\Tr\E$ \partxt{with $\Ab \in \GL(3)$ uniform on $\EE$}.
	\it the dependency in $\Thetab$ can be simplified out, iff the functional is invariant under arbitrary asynchronous uniform micro-metric-preserving transformations $\bmat{\Ab&\bold 0\\\bold0 & \Rb} \in \Tr\aff{\E}{\E}$ of $\Tr\E$ \partxt{with $\Ab \in \GL(3)$ and $\Rb \in \O(3)$ uniform.}.
\end{enumerate}

The energy of a system is a well-defined property of the later; hence, it is frame invariant. Choosing the nature of a material (gaz, sand, fluid, solid, micromorphic media, etc) relates to choosing a subset of the set of principal invariants, or of their sub-invariants. Then, choosing a constitutive equation (rubber, metal, stone, etc.) relates to choosing the dependency of the energy on those invariants. This, along with the earlier discussion of this section, is to be compared with \textcite{zheng1994theory}, in which a correspondence is established between group of symmetries of a (classical) system and the set of invariants on which the energy can depend.

\subsection{On the micro-linear case}

\label{sect:micro-lin}

Of particular interest is the special case of micro-linear materials, which are materials where the micro-spaces have a linear structure. That is, micro-linear materials are vector bundles. In such a case, one requires the physically acceptable maps to preserve that linear structure, leading to the notion of micro-linear maps, as detailed in the following definition:

\begin{defi}{Micro-linear maps}{micro_lin}
	Let $\bundle\A\AA$ and $\bundle\D\DD$ be two \textbf{vector} bundles. A map $\Lb \in \lpa{\Tr\A}{\Tr\D}$ is said to be \textbf{micro-linear} if and only if:
	\begin{enumerate}
		\it the punctual shadow $\ell : \A \longto \D$ of $\Lb$ is \textbf{linear}.
			$$\ell \in \l\A\D$$
		\it for all $\ub \in \Tr_a\A$ and every trivialising affine frame on $\Tr\A$ and $\Tr\D$, the vectorial coordinate of $\Lb_a\cdot\ub \in \Tr_{\ell(a)}\D$ is \textbf{linear} in the vertical coordinate of $a \in \A$
	\end{enumerate}
	
	The set of micro-linear elements of $\lpa{\Tr\A}{\Tr\D}$ are denoted:
	\algn{
		\llpa{\Tr\A}{\Tr\D} \subset \lpa{\Tr\A}{\Tr\D}
	}
	
\hrulefill

	Le $\bundle\B\BB$ be a material bundle. One has the following notations:
	\algn{
		\conf[\text{holo}]{\Tr\B}{\Tr\E} :=& \conf{\Tr\B}{\Tr\E} \cap \Tr\aff{\B}{\E}\\
		\lconf{\Tr\B}{\Tr\E} :=& \conf{\Tr\B}{\Tr\E} \cap \llpa{\Tr\B}{\Tr\E}
	}
	Where the last notation is only defined when $\bundle\B\BB$ is a vector bundle. That is, when the material has a linear micro-structure.
\end{defi}

Physically, the micro-linearity can be interpreted as stating that each microscopic space has a point (\eg{} its center of mass) which is preserved by the placement. This assumption has been used in the fundamental works of \fullciteauthors{toupin1964theories,mindlin1964micro,eringen1964nonlinear}{eringen1998microcontinuum,toupin1962elastic}. In (linear trivialising) coordinates, being micro-linear means that, setting $\bmat{\bmat{\ov x \\ y}\\\bmat{\delta\ov x\\\delta y}} = \Fb\cdot\bmat{\bmat{\ov X \\ Y}\\\bmat{\delta\ov X\\\delta Y}}$, one has that $y$ and $\delta y$ are linear in $Y$ (instead of affine) while $\ov x$ and $\delta \ov x$ are still blind towards it.\\

The micro-linearity of the placement map $\Fb \in \llpa{\Tr\B}{\Tr\E}$ has a noticeable impact on the invariants. First, one has that the connection $\Gammab := \Fb^*\gammab := \Fb^{-1}\cdot\gammab_\varphi\cdot\ov\Fb$ is linear. Furthermore, the solder form $\Thetab := \Fb^*\varthetab := {\Fb_\vr^\vr}^{-1}\cdot\varthetab\cdot\ov\Fb$ is constant in the vertical punctual coordinate \partxt{this is true for any $\Fb \in \conf{\Tr\B}{\Tr\E}$}. Indeed, coordinate-wise one has:\\

\byalgn[.\hspace{-36.4pt}]{
		\Gammab &: \fun{\B\times_\BB\Tr\BB}{\Tr\B}%
					   {\bmat{\bmat{\ov X\\ Y}\\\bmat{\ov X\\ \delta \ov X}}}%
					   {\bmat{\bmat{\ov X\\ Y}\\\bmat{\delta \ov X\\ \at{{\Fb_\vr^\vr}^{-1}}_{\ov X}\cdot\at{\Fb_\hr^\vr}_{\bmat{\ov X\\ Y}}\cdot\delta \ov X}}}
	}{
		\Thetab &: \fun{\B\times_\BB\Tr\BB}{\Tr\B}%
					   {\bmat{\bmat{\ov X\\ Y}\\\bmat{\ov X\\ \delta \ov X}}}%
					   {\bmat{\bmat{\ov X\\ Y}\\\bmat{\bold0\\ \at{{\Fb_\vr^\vr}^{-1}}_{\ov X}\cdot\at{\Fb_\hr^\hr}_{\ov X}\cdot\delta \ov X}}}
	}

where, since $\Fb \in \lpa{\Tr\B}{\Tr\E}$, $\Fb_\hr^\hr$ and $\Fb_\vr^\vr$ are blind towards $Y$ and, by assumption, $\Fb_\hr^\vr$ is linear in $Y$. This means that $\Gammab$ and $\Thetab$ are respectively the linear and constant part of the affine connection $\Gammab-\Thetab$ and can, in particular, canonically be obtained from it. From \cref{th:data_equiv} one has that $\Gf$ is equivalent to $\left (\Gf_\vr^\vr, \Gammab - \Thetab\right )$. In the case of micro-linear placement maps, one therefore has the following corollary:

\begin{coro}{Data equivalence (micro-linear case)}{data_equiv}
	Let $\Fb \in \lconf{\Tr\B}{\Tr\E}$ be unknown. Then, the data of $\Gf = \Fb^\star\cdot\gf\cdot\Fb$ is equivalent to the joint data of:
	\begin{enumerate}
		\it the micro-metric $\Gf_\vr^\vr : \Vr\B \longto \Vr^\star\B$
		\it the connection $\Gammab : \B\times_\BB\Tr\BB\longto \Tr\B$
		\it the solder form $\Thetab: \B\times_\BB\Tr\BB\longto \Vr\B$
	\end{enumerate}
\end{coro}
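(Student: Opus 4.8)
The plan is to reduce to \cref{th:data_equiv} and then split the single datum $\Gammab-\Thetab$ into the two data $\Gammab$ and $\Thetab$, using micro-linearity. Since $\Fb \in \lconf{\Tr\B}{\Tr\E} \subset \conf{\Tr\B}{\Tr\E}$, \cref{th:data_equiv} already gives that the data of $\Gf$ is equivalent to the joint data of the micro-metric $\Gf_\vr^\vr$ and the affine connection $\Gammab-\Thetab$. As the pair $\left(\Gammab,\Thetab\right)$ visibly determines $\Gammab-\Thetab$ by subtraction, the only thing left to establish is the converse: in the micro-linear case, $\Gammab$ and $\Thetab$ can each be recovered from $\Gammab-\Thetab$ alone (together with the given reference connection $\Gammabref$).

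First I would record the two structural facts read off from the coordinate expressions displayed just above the corollary. Because $\Fb \in \llpa{\Tr\B}{\Tr\E}$ on the \textbf{vector} bundle $\B$, the block $\Fb_\hr^\vr$ is homogeneous-linear in the vertical punctual coordinate $Y$, whereas $\Fb_\hr^\hr$ and $\Fb_\vr^\vr$ are blind to $Y$. Hence the vertical coordinate of $\Gammab$ is homogeneous-linear in $Y$ — in particular it vanishes on the zero section $0_{\ov X}$ of each fibre — while the vertical coordinate of $\Thetab$ is independent of $Y$. Equivalently, taking $\Gammabref$ linear (which is available since $\B$ is a vector bundle and its reference connection is left free), both $\Gammab-\Gammabref$ and $\Thetab$ are $\Vr\B$-valued tensors, the former homogeneous-linear in $Y$ and the latter constant in $Y$.

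Then I would carry out the separation. The object $\left(\Gammab-\Thetab\right)-\Gammabref = \left(\Gammab-\Gammabref\right)-\Thetab$ is $\Vr\B$-valued and affine in $Y$. On a vector bundle the splitting of a $Y$-affine, $\Vr\B$-valued quantity into its homogeneous-linear and its constant part is canonical and trivialisation-independent: the zero section is intrinsic, and the transition maps, being linear, preserve both the value at $Y=0$ and the homogeneous-linear component. By the two facts above, the homogeneous-linear part of $\left(\Gammab-\Thetab\right)-\Gammabref$ is exactly $\Gammab-\Gammabref$ and its constant part is exactly $-\Thetab$. Therefore $\Gammab$ is obtained by adding $\Gammabref$ back to the homogeneous-linear part, and $\Thetab$ is the negative of the constant part. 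This recovers $\left(\Gammab,\Thetab\right)$ from $\Gammab-\Thetab$, so $\Gammab-\Thetab$ and $\left(\Gammab,\Thetab\right)$ carry the same data; combined with \cref{th:data_equiv} this yields the asserted equivalence between $\Gf$ and $\left(\Gf_\vr^\vr,\Gammab,\Thetab\right)$.

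The main obstacle is precisely the trivialisation-independence of this linear/constant decomposition. For a general affine bundle the homogeneous-linear part of a $Y$-affine quantity is \emph{not} intrinsic, since affine changes of fibre coordinate leak a constant into the linear slot; this is exactly why \cref{th:data_equiv} could only return the combination $\Gammab-\Thetab$ and not its two summands. The whole force of the micro-linear hypothesis (\cref{def:micro_lin}) is that $\B$ is a vector bundle, so the fibre transition maps are linear and the zero section canonical — this is what simultaneously makes evaluation ``at $Y=0$'' meaningful, forces $\Gammab$ to be genuinely linear (no constant vertical term), and makes $\Thetab$ uniform along each fibre. I would therefore be careful to justify each of these three points from the vector-bundle structure rather than from a mere choice of trivialisation.
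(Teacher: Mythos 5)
Your proposal is correct and is essentially the paper's own argument: the paper likewise deduces the corollary from \cref{th:data_equiv} together with the observation that, by micro-linearity, $\Gammab$ and $\Thetab$ are respectively the linear and the constant part (in the vertical punctual coordinate $Y$) of the affine connection $\Gammab-\Thetab$, a splitting that is canonical precisely because $\B$ is a vector bundle. Your detour through a linear reference connection $\Gammabref$, splitting the $\Vr\B$-valued tensor $\left(\Gammab-\Thetab\right)-\Gammabref$ rather than the connection itself, is only a slightly more careful phrasing of that same extraction, not a genuinely different route.
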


The second major impact concerns the material holonomic connection $\Tr\varphi^*\gammab$. Indeed, looking at the proof of \cref{th:galf_as_invar}, one realises that the condition $\Tr\ab^*\gammab=\gammab$ was only used to ensure the homogeneity of the punctual microscopic translation $\tb^\vr$. However, by definition, this very translation is zero when the map is micro-linear. As a direct consequence, one has the following corollary:

\begin{coro}{$\Tr\Galf \cap \llpa{\Tr\E}{\Tr\E}$ as the intersection of stabilisers}{galf_as_invar}
	The first-order micro-linear Galilean group can be given by the following equations:
	\algn{
		\Tr\Galf \cap \llpa{\Tr\E}{\Tr\E} &= \setof{\Ab \in \llpa{\Tr\E}{\Tr\E}}{\mat{%
		  				\Ab~\text{inversible},~~\ab~\text{inversible},\\
				  		\Ab^*\gf=\gf
		  		}}
	}
	where $\ab : \E \longto \E$ implicitly refers to the punctual shadow of $\Ab$ in $\l{\Tr\E}{\Tr\E}$.
\end{coro}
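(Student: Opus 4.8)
The plan is to deduce this corollary from the general characterisation in \cref{th:galf_as_invar} by showing that, for micro-linear maps, the two extra conditions $\Ab^*\gammab=\gammab$ and $\Tr\ab^*\gammab=\gammab$ become automatic once $\Ab^*\gf=\gf$ holds. The inclusion $\subseteq$ is immediate: any $\Ab \in \Tr\Galf \cap \llpa{\Tr\E}{\Tr\E}$ is in particular an invertible element of $\Tr\Galf$ over an invertible $\ab$ with $\Ab^*\gf=\gf$ by \cref{th:galf_as_invar}, hence lies in the right-hand set. All the work is in the reverse inclusion.

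For $\supseteq$, I would start exactly as in the proof of \cref{th:galf_as_invar}: given $\Ab\in\llpa{\Tr\E}{\Tr\E}$ invertible over an invertible $\ab$ with $\Ab^*\gf=\gf$, \cref{lem:pres_gf} provides, at each $x\in\E$, an $\Ob_x \in \O\left(\gb_{\ov x},\gb_{\ov\ab(\ov x)}\right)$ with $\Ob_x = \varthetab_{\ab(x)}^{-1}\cdot\at{\Ab_\vr^\vr}_x\cdot\varthetab_x$ and coupling block $\at{\Ab_\hr^\vr}_x = \varthetab_{\ab(x)}\cdot\left(\Ob_x-\ov\Ab_{\ov x}\right)\cdot\Tr_x\pi_\E$. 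The key observation is that, since $\Ab\in\lpa{\Tr\E}{\Tr\E}$ forces $\Ab_\vr^\vr=\left(\Tr\ab\right)_\vr^\vr$ and $\ov\Ab=\Tr\ov\ab$, both $\Ob_x$ and $\ov\Ab_{\ov x}$ depend only on $\ov x$; read in the holonomic coordinates (where $\varthetab$ and $\Tr_x\pi_\E$ are constant) the coupling $\Ab_\hr^\vr$ is therefore independent of the vertical coordinate $y$. Micro-linearity (\cref{def:micro_lin}) demands instead that this vectorial coordinate be \emph{homogeneous} linear in $y$; a term that is simultaneously $y$-independent and homogeneous in $y$ must vanish, so $\Ab_\hr^\vr=\bold0$. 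By \cref{lem:pres_gamma} this already gives $\Ab^*\gammab=\gammab$ for free, and forces $\Ob_x=\ov\Ab_{\ov x}=\Tr_{\ov x}\ov\ab$.

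From here the argument mirrors the end of the proof of \cref{th:galf_as_invar}. Since $\Tr_{\ov x}\ov\ab=\Ob_x$ is orthogonal at every $\ov x$, integrating the length element shows $\ov\ab$ is an isometry of $\EE$, hence $\ov\ab(\ov x)=\Rb\cdot\ov x+\ov\tb$ with $\Rb\in\O(3)$ constant, and correspondingly $\Ab_\vr^\vr=\Ob=\Rb$ is constant. This is precisely where micro-linearity replaces the missing hypothesis $\Tr\ab^*\gammab=\gammab$: that condition was used in \cref{th:galf_as_invar} only to make the fibre translation $\tb^\vr$ uniform, but micro-linearity forces $\ab$ to be linear on the fibres, so $\tb^\vr=\bold0$ identically and uniformity is automatic. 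The resulting $\Ab$ is then exactly of the form described in \cref{lemm:form_TGalf}, whence $\Ab\in\Tr\Galf$, completing the inclusion.

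I expect the only delicate point to be the central step: correctly extracting from \cref{lem:pres_gf} that the coupling block is $y$-independent, and then confronting this with the homogeneity forced by micro-linearity to conclude $\Ab_\hr^\vr=\bold0$. Everything downstream — the integration to an isometry and the vanishing of $\tb^\vr$ — is either routine or a direct simplification of the general proof.
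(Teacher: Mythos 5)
Your proof is correct, but it takes a genuinely different route from the paper's. The paper's proof is a two-line reduction to \cref{th:galf_as_invar}: it notes (i) that the condition $\Tr\ab^*\gammab=\gammab$ was only ever used to force uniformity of the fibre translation $\tb^\vr$, which micro-linearity kills outright, and (ii) that in the micro-linear case $\Ab^*\gammab$ is \emph{canonically obtainable} from $\Ab^*\gf$ $-$ via $\ker\left (\Ab^*\gf\right ) = \Hr^{\Ab^*\gammab-\Ab^*\varthetab}$ and the extraction of the linear part of the affine connection $\Ab^*\gammab-\Ab^*\varthetab$ $-$ so that $\Ab^*\gf=\gf$ alone implies $\Ab^*\gammab=\gammab$. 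You instead re-run the proof of \cref{th:galf_as_invar} from scratch and replace its two connection-stabilisation hypotheses by a single new coordinate argument: from \cref{lem:pres_gf} and the structure of $\lpa{\Tr\E}{\Tr\E}$ \partxt{$\Ab_\vr^\vr=\left (\Tr\ab\right )_\vr^\vr$ with $\ab$ affine on fibres, $\ov\Ab=\Tr\ov\ab$, and $\varthetab$, $\Tr\pi_\E$ constant in holonomic coordinates} the coupling block $\Ab_\hr^\vr$ is independent of the vertical coordinate $y$, while micro-linearity forces it to be homogeneous linear in $y$, hence $\Ab_\hr^\vr=\bold0$; the rest \partxt{\cref{lem:pres_gamma}, the integration to an isometry, $\tb^\vr=\bold0$, and \cref{lemm:form_TGalf}} then goes through verbatim. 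Your key step is sound $-$ a $y$-independent, $y$-homogeneous term vanishes $-$ and your identification of where micro-linearity enters matches the paper's discussion. What the paper's route buys is brevity and a conceptual point it wants to emphasise \partxt{the pseudo-metric alone determines the connection in the micro-linear case, which is the content of \cref{coro:data_equiv}}; what your route buys is self-containedness: it needs only \cref{lem:pres_gf}, \cref{lem:pres_gamma} and \cref{lemm:form_TGalf}, and bypasses the kernel/data-equivalence machinery of \cref{th:data_equiv} entirely.
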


\begin{proof}
	From \cref{th:galf_as_invar}, one has $\Ab^*\gf=\gf$, $\Ab^*\gammab=\gammab$ and $\Tr\ab^*\gammab=\gammab$. From the discussion above, one has that, for $\Ab \in \llpa{\Tr\E}{\Tr\E}$, the later is immediate and, since $\Ab^*\gammab$ is canonically obtainable from $\Ab^*\gf$, $\Ab^*\gf=\gf$ implies $\Ab^*\gammab=\gammab$.
\end{proof}

By combining those two corollaries, one obtains the following crucial corollary:

\renewcommand{\tmp}{\lconf{\Tr\B}{\Tr\E}}
\begin{coro}{Tensorial invariants (micro-linear case)}{inv_orb}
	Let $\bundle\B\BB$ be a \textbf{vector} bundle. Let $\S^+\left (\B\right )$ be the set of pseudo-metrics on $\B$ and let $\Gf$ be the following application:
	\algn{
		\Gf &: \fun%
		{\tmp}%
		{\S^+\left (\B\right )}%
		{\Fb}%
		{\raisebox{-0.75em}{$\begin{matrix*}[l]%
			\Gf\left (\Fb\right )%
				&=& \Fb^*\gf\\%
				&=& \Tr\varphi^*\gf%
			\end{matrix*}$}%
		}
	}
	The application $\Gf$ is \underline{constant on the orbits} of the action $\Tr\Galf \acts \tmp$ and provides, using the fundamental theorem on homomorphisms, the following bijection:
	\algn{
		\faktor{\tmp}{\Tr\Galf} \quad &\xhooktwoheadrightarrow{\quad\Gf\quad}& \Imr\left (\Gf\right )
	}
	
	In other words, \underline{$\Gf$ is complete}, meaning it characterises the orbits:
	\algn{
		&\forall \left (\Fb_1, \Fb_2\right ) \in  {\tmp}^2,& &\Orb{\Fb_1}=\Orb{\Fb_2}& &\iff& &\Gf\left (\Fb_1\right ) = \Gf\left (\Fb_2\right )&
	}
\end{coro}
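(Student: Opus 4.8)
The plan is to repeat the argument of \cref{th:inv_orb} almost verbatim, with the sole change that the characterisation of the Galilean group by \cref{th:galf_as_invar} is replaced by its micro-linear sharpening \cref{coro:galf_as_invar}. The point of that corollary is precisely that, on micro-linear maps, the single condition $\Ab^*\gf = \gf$ already forces $\Ab^*\gammab = \gammab$ and $\Tr\ab^*\gammab = \gammab$; consequently the separate invariant $\Tr\varphi^*\gammab$ of the general theorem becomes redundant and all of $\II$ collapses onto $\Gf$, which by \cref{coro:data_equiv} reconstructs exactly the remaining triple $\left(\Gf_\vr^\vr, \Gammab, \Thetab\right)$.

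Concretely, I would fix $\Fb, \Fb' \in \lconf{\Tr\B}{\Tr\E}$, set $\widehat\Fb := \Fb'\cdot\Fb^{-1} : \Imr(\Fb) \longto \Tr\E$ with punctual shadow $\widehat\varphi$, and first check that $\widehat\Fb$ is again micro-linear and invertible with invertible shadow. Invertibility is inherited from $\Fb, \Fb'$ being differential embeddings, and micro-linearity follows because \cref{def:micro_lin} is stable under composition and inversion: the shadow $\widehat\varphi = \varphi'\circ\varphi^{-1}$ is linear, and the block formula for $\Fb^{-1}$ keeps the vectorial coordinate \emph{linear} (not merely affine) in the vertical punctual coordinate whenever that of $\Fb$ is. The orbit reduction of \cref{th:inv_orb} then applies unchanged,
\[ \Orb{\Fb} = \Orb{\Fb'} \iff \widehat\Fb \in \at{\Tr\Galf}_{\Imr(\Fb)}, \]
together with the group-action equivalence $\widehat\Fb^*\kappa = \at{\kappa}_{\Imr(\Fb)} \iff \Fb^*\kappa = \Fb'^*\kappa$ valid for every smooth $\kappa$.

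The decisive step is to feed $\widehat\Fb$ into \cref{coro:galf_as_invar} rather than \cref{th:galf_as_invar}: since $\widehat\Fb$ is micro-linear and invertible, membership $\widehat\Fb \in \at{\Tr\Galf}_{\Imr(\Fb)}$ reduces to the single equation $\widehat\Fb^*\gf = \at{\gf}_{\Imr(\Fb)}$, the extension of the fibrewise-Galilean data to a global element of $\Tr\Galf$ being supplied, as in \cref{th:inv_orb}, by the normal form of \cref{lemm:form_TGalf}. Applying the group-action equivalence to $\kappa = \gf$ converts this into $\Fb^*\gf = \Fb'^*\gf$, i.e. $\Gf(\Fb) = \Gf(\Fb')$, which is the asserted completeness $\Orb{\Fb_1} = \Orb{\Fb_2} \iff \Gf(\Fb_1) = \Gf(\Fb_2)$. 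Constancy on orbits is its forward implication, and the induced bijection of $\faktor{\lconf{\Tr\B}{\Tr\E}}{\Tr\Galf}$ onto $\Imr(\Gf)$ follows from the fundamental theorem on homomorphisms.

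I expect two points to need care. The minor one is the restriction subtlety, since \cref{coro:galf_as_invar} is stated for maps on all of $\Tr\E$ while $\widehat\Fb$ lives only over $\Imr(\Fb)$; this is the same issue already present in \cref{th:inv_orb}, resolved identically because the Galilean conditions are pointwise-algebraic and \cref{lemm:form_TGalf} yields the global extension. The genuine obstacle is the twofold description $\Gf(\Fb) = \Fb^*\gf = \Tr\varphi^*\gf$ written in the statement: although $\Fb^*\gf_\vr^\vr = \Tr\varphi^*\gf_\vr^\vr$ and $\Fb^*\varthetab = \Tr\varphi^*\varthetab$ by \cref{rem:theta_holo}, \cref{lem:Gf_compat} gives $\ker\left(\Fb^*\gf\right) = \Hr^{\Fb^*(\gammab-\varthetab)}\B$ versus $\ker\left(\Tr\varphi^*\gf\right) = \Hr^{\Tr\varphi^*(\gammab-\varthetab)}\B$, and these differ as soon as $\Fb^*\gammab \neq \Tr\varphi^*\gammab$. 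I would therefore carry the proof with $\Gf(\Fb) := \Fb^*\gf$ as the complete invariant and verify separately, rather than assume, whether micro-linearity is intended to make the two expressions coincide on the blocks that enter $\gf$.
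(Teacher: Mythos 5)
Your proof is correct and follows essentially the same route as the paper's: form $\widehat{\Fb} = \Fb'\cdot\Fb^{-1}$, observe that it is invertible and micro-linear, and invoke \cref{coro:galf_as_invar} so that membership in the Galilean group collapses to the single equation $\widehat{\Fb}^*\gf = \gf$, whence completeness of $\Fb \mapsto \Fb^*\gf$ exactly as in \cref{th:inv_orb}.

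Two remarks. First, the paper spends the first half of its proof on a point you handle only implicitly: $\Tr\Galf$ does not map $\lconf{\Tr\B}{\Tr\E}$ into itself (Galilean translations destroy micro-linearity), so $\faktor{\lconf{\Tr\B}{\Tr\E}}{\Tr\Galf}$ is not the quotient of a group action and must be defined as a set of orbit traces; the fact that makes this consistent is precisely your observation that the connecting element $\widehat{\Fb}$ between two micro-linear configurations lies in $\llpa{\Tr\E}{\Tr\E}$, which the paper uses to replace the quotient by the one under $\Tr\Galf\cap\llpa{\Tr\E}{\Tr\E}$. Second, your suspicion about the displayed equality $\Fb^*\gf = \Tr\varphi^*\gf$ is well founded: the paper's own proof neither uses nor establishes it, its chain of equivalences terminating at $\Fb_1^*\gf = \Fb_2^*\gf$; and, as you argue via \cref{lem:Gf_compat}, the kernels of the two pull-backs are the horizontal spaces of $\Fb^*\gammab - \Fb^*\varthetab$ and $\Tr\varphi^*\gammab - \Tr\varphi^*\varthetab$, which differ whenever $\Fb^*\gammab \neq \Tr\varphi^*\gammab$, something micro-linearity does not prevent. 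So carrying the proof with $\Gf\left(\Fb\right) := \Fb^*\gf$ alone, as you do and as the paper's proof in fact does, is the right call.
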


\begin{proof}
	First, one must notice that $\Tr\Galf$ does not send $\tmp$ into $\tmp$ but rather into $\conf{\Tr\B}{\Tr\E}$. The notation $\faktor{\tmp}{\Tr\Galf}$ therefore needs to be specified as it does not correspond to a group action any-more. As before, one defines it as
	$$\faktor{\tmp}{\Tr\Galf} = \setof{\Orb{\Fb}}{\Fb\in\tmp}$$
	The orbit can then be generalised in two ways:
	\byalgn[.\hspace{-9.5pt}\text{or}\hspace{-9.5pt}]{
		\Orb{\Fb}_{\vr1} = \setof{\Of\cdot\Fb}{\Of\in\Tr\Galf}
	}{
		\Orb{\Fb}_{\vr2} = \Orb{\Fb}_{\vr1} \cap \tmp
	}
	This leads to two equivalent definition of the quotient, in the sense that there is a natural bijection between the two. Furthermore, if $\Fb_1 = \Of\cdot\Fb_2$ with $\Of \in \Tr\Galf$ and $\left (\Fb_1, \Fb_2\right ) \in \tmp$ then $\Of = \Fb_1\cdot\Fb_2^{-1} \in \llpa{\Tr\E}{\Tr\E}$. This implies that one has:
	$$\faktor{\tmp}{\Tr\Galf} = \faktor{\tmp}{\Tr\Galf\cap\llpa{\Tr\E}{\Tr\E}}$$
	The proof then follows the same sketch as the proof of \cref{th:inv_orb}:
	\algn{
		&&			\Orb{\Fb_1} &= \Orb{\Fb_2}\\
		&\iff& 		\opp[\Tr\Galf\cap\llpa{\Tr\B}{\Tr\E}]{Orb}{\Fb_1} &= \opp[\Tr\Galf\cap\llpa{\Tr\B}{\Tr\E}]{Orb}{\Fb_2}\\
		&\iff& 		\widehat{\Fb} := \Fb_1\cdot\Fb_2^{-1} &\in \Tr\Galf\cap\llpa{\Tr\B}{\Tr\E}\\
		&\iff& 		\widehat{\Fb}^*\gf &= \gf\\
		&\iff& 		\Fb_1^*\gf &= \Fb_2^*\gf
	}
\end{proof}

This means that any frame-invariant functional and, in particular, any energy formulated in a micro-linear model can be expressed as a functional of the pseudo-metric $\Gf : \Tr\B \longto \Tr^\star\B$. Equivalently (\cref{coro:data_equiv}), it can be expressed as a functional of the three invariants of the micro-linear case $\left (\Gf_\vr^\vr, \Thetab, \Gammab\right )$. This result should be compared with its classical analogue stating that all classical energies are functionals of the Cauchy-Green tensor $\Gb : \Tr\BB \longto \Tr^\star\BB$ \parencite[275,283]{GON}.\\

Using the vocabulary of the previous sub-section, one notices that the result implies that every frame-invariant functional of the micro-linear placement map has a purely vectorial dependency. Note that, contrary to the holonomic case, the converse is false as having a purely vectorial dependency does not mean one is expressible as a functional of the micro-linear part. This is because the translational part of the coupling $\Fb_\hr^\vr$ may have some contribution. Beware that the fact that the holonomic connection $\Tr\varphi^*\gammab$ is not a principal invariant in the micro-linear case does not mean it does not contribute nor that it is equal to $\Gammab = \Fb^*\gammab$. That being said, it does mean that $\Tr\varphi^*\gammab$ will be a functional of the other three invariants $\left (\Gf_\vr^\vr, \Thetab, \Gammab\right )$.\\

As mentioned above, the micro-linear assumption is built-in the models of \fullciteauthors{toupin1964theories,mindlin1964micro,eringen1964nonlinear}{eringen1998microcontinuum,toupin1962elastic}. As an example, the principal invariants $\left (\Gf_\vr^\vr, \Thetab, \Gammab\right )$ can be compared to the \say{set of strain measures} obtained in \cite[p. 15]{eringen1998microcontinuum}. By carefully matching the definitions, one can see that the three strain measures are as follows:
	\begin{enumerate}
		\it the \say{deformation tensor}, which \cite{eringen1998microcontinuum} denoted $\Cf$, is
			\[
				\Thetab\cdot\Tr\pi_\B : \Tr\B \longto \Vr\B
			\]
		\it the \say{micro-deformation tensor}, which \cite{eringen1998microcontinuum} denoted $\C$, is
			\[
				\Gf_\vr^\vr : \Vr\B \longto \Vr^\star\B
			\]
		\it the \say{wryness tensor}, which \cite{eringen1998microcontinuum} denoted $\Gammab$, is
			\[
				\hr_\Gammabref - \hr_\Gammab = {\Fb_\vr^\vr}^{-1}\cdot\Fb_\hr^\vr : \Tr\B \longto \Vr\B
			\]
	\end{enumerate}

One notices that theses \say{strain measures} are therefore equivalent to the three principal tensorial invariants of the micro-linear case. However, in \cite{eringen1998microcontinuum}, the placement map is also holonomic, \ie{} $\Fb = \Tr\varphi$. As a direct consequence, $\Fb^*\gammab=\Tr\varphi^*\gammab$. When generalising the results to the entire set $\conf{\Tr\B}{\Tr\E}$, the two connections start to differ and the dependency in $\left (\Fb^*\gammab, \Tr\varphi^*\gammab\right )$ may take a lot of different forms. Based on the previous section, one sees that, among all the possible dependencies on $\II$, the generalisation may in particular be: purely vectorial \partxt{\ie{} using $\Fb^*\gammab$ as written above}, purely holonomic \partxt{\ie{} using $\Tr\varphi^*\gammab$ as the connection} or have a generic mixed dependency. Depending on the case, the material will be different. In particular, the second case will always render the functional blind to any curvature of the placement map \partxt{\ie{} disclinations of the material}.\\

As described in the introduction of this paper, several other models exist, to which this paper can also be partially compared, albeit in a less direct manner. Among those, some kept $\BB$ as the base space but weakened the regularity of $\ov\varphi$. As an example \cite{rakotomanana1998contribution} used a weakly-continuous map, allowing some discontinuities, while \cite{Kleinert:2008} used a multi-valued map, an alternative interpretation of the non-holonomy\footnote{One can obtain such a multi-valued map using the parallel transport of $\Gammab$. When $\Fb=\Tr\varphi$ this yields $\varphi$, but is multivalued otherwise.} of $\Fb$. Another approach is to make extensive use of germ theory and group theory, as in \cite{epstein2007material}.

\section{Conclusion}

In this paper, a generalised notion of continuum media has been introduced along with a notion of placement map. On the ambient space $\bundle\E\EE$, a generalisation $\Tr\Galf$ of the classical Galilean group $\Tr\Gal$ has been constructed. From the invariance of this group and some physical assumption on the material, a set of tensorial invariants has been computed. The result is a model which is:
\begin{enumerate}
	\it entirely governed by its first-order placement map $\Fb$, with no other hidden degree of freedom.
	\it non-holonomic \partxt{\ie{} $\Fb\neq\Tr\varphi$ in general}, hence potentially displaying curvature.
	\it geometrically exact, to be understand as in \cite{neff2007geometrically, meier2018geometrically}. This means that no approximation have been done once the mathematical definitions are given.
	\it valid in small and large transformations, as a direct consequence of the geometrical exactness.
	\it valid for solids, fluids and gases as $\B$ and $\Fb$ are left as generic as possible.
	\it valid for any macroscopic dimension and multiple structures. In particular, beams and rings can be model by setting $\dim\left (\BB\right )=1$; while plates, membranes and shells would have $\dim\left (\BB\right ) = 2$. 
\end{enumerate}
 
Furthermore, the nomenclature defined in this papers allows for a rigorous numerical implementation which is:
\begin{enumerate}
	\it strongly typed. Points belong to a given space, functions maps a given space to another, composition of function is only valid if the spaces are the same, etc.
	\it straightforward. Once a given coordinate system is fixed, everything can be implemented in term of field of matrices over a real domain and, most of the time, computations can be made at a fixed point; hence, no advanced system is required as would be the case for group theory, germ theory or even multi-valued functions.
	\it fast. Since most of the computation are linear algebra, computations are usually as fast as would their classical, purely macroscopic, counterpart be.
\end{enumerate}

Having exhaustively described our model, future work may focus, among other things, on specifying the energy and computing the statical equilibrium states for some particular materials; establishing more (partial) correspondences with other models or adding time to the model and studying its dynamics.

\ifredboxes{
}
\VOID{\section{TODO}}

\subsection*{Acknowledgements}

This work is partially funded by Centre Henri Lebesgue, program ANR-11-LABX-0020-0.\\

\noindent We wish to express our sincere gratitude towards Lalaonirina Rakotomanana-Ravelonarivo for his continuous support and guidance throughout the elaboration of this work.\\

\noindent We also wish to thank all attendees of the GDR CNRS n°$2043$ \say{Géométrie Différentielle et Mécanique} which, through numerous exchanges, helped us to clear out our vision of the different topics related to this work.

\tcbstoprecording
\end{document}